\documentclass[10pt]{article}  
\usepackage{graphicx}
\usepackage{amscd}
\usepackage{amsmath,amsfonts,amssymb,amsthm,mathrsfs,cite}
\usepackage[all,pdf]{xy}
\xyoption{curve}
\usepackage{fancybox}
\usepackage{mathptm}
\usepackage{color}
\usepackage{url}
%
%

\oddsidemargin -18pt \voffset=-1.5cm
\addtolength{\textheight}{4cm}\addtolength{\textwidth}{4.5cm}

\newtheorem{thm}{Theorem}[section]

\newtheorem{cor}[thm]{Corollary}
\newtheorem{lem}[thm]{Lemma}
\newtheorem{prop}[thm]{Proposition}
\newtheorem{defn}[thm]{Definition}
\newtheorem{ques}[thm]{Question}
\theoremstyle{definition}

\newtheorem{exm}[thm]{Example}
\newtheorem{rem}[thm]{Remark}
\theoremstyle{default}
\newcounter{example}[section]

\newcommand{\m}{\Lambda}
\newcommand{\Hom}{\operatorname{Hom}}
\newcommand{\End}{\operatorname{End}}
\newcommand{\Ker}{\operatorname{Ker}}

\newcommand{\cok}{\operatorname{Coker}}

\newcommand{\D}{\operatorname{D}}

\newcommand{\cpx}[1]{#1^{\bullet}}
\newcommand{\opp}{^{\rm op}}
\newcommand{\add}{{\rm add}}
\newcommand{\Ext}{{\rm Ext}}
\newcommand{\Tor}{{\rm Tor}}
\DeclareMathOperator{\Mon}{{\sf Mon}}
\DeclareMathOperator{\stMon}{\underline{\sf Mon}}
\newcommand{\Rep}{{\rm Rep}}
\newcommand{\rad}{{\rm rad}}
\newcommand{\gldim}{{\rm gldim}}
\newcommand{\pd}{{\rm pd}}
\newcommand{\id}{{\rm id}}
\newcommand{\modcat}[1]{#1\mbox{\sf -mod}}

\newcommand{\Modcat}[1]{#1\mbox{\sf -Mod}}
\newcommand{\projcat}[1]{#1\mbox{\sf -proj}}
\newcommand{\Projcat}[1]{#1\mbox{\rm -Proj}}

\newcommand{\Gp}{\mbox{\sf -Gproj}}
\newcommand{\stGp}{\mbox -{\underline{\sf Gproj}}}
\newcommand{\GP}{\mbox{\sf -GProj}}
\DeclareMathOperator{\filt}{{\sf filt}}
\DeclareMathOperator{\addfilt}{\overline{\sf filt}}
\DeclareMathOperator{\Filt}{{\sf Filt}}
\DeclareMathOperator{\AddFilt}{\overline{\sf Filt}}

\newcommand{\lra}{\longrightarrow}
\newcommand{\lraf}[1]{\stackrel{#1}{\lra}}
\newcommand{\ra}{\rightarrow}

\title{Gorenstein projective bimodules via monomorphism categories and filtration categories}
\author{Wei Hu, Xiu-Hua Luo, Bao-Lin Xiong\thanks{The corresponding author. Email: xiongbaolin@gmail.com}, \ Guodong Zhou}
\date{}

\begin{document}
 \maketitle

\renewcommand{\thefootnote}{\alph{footnote}}
\setcounter{footnote}{-1}
\footnote{2010 Mathematics Subject Classification: 18G25, 16G10, 16D20.}
\setcounter{footnote}{-1}
\footnote{Keywords:  Gorenstein projective module, Gorenstein algebra, monomorphism category, filtration category}

\begin{abstract}
We generalize the monomorphism category from quiver (with monomial relations) to arbitrary finite dimensional algebras by a homological definition. Given two finite dimension algebras $A$ and $B$, we use the special monomorphism category $\Mon(B, A\Gp)$ to describe some Gorenstein projective bimodules over the tensor product of $A$ and $B$. If one of the two algebras is Gorenstein, we give a sufficient and necessary condition for $\Mon(B, A\Gp)$ being the category of all Gorenstein projective bimodules. In addition, if both $A$ and $B$ are Gorenstein, we can describe the category of all Gorenstein projective bimodules via filtration categories. Similarly, in this case, we get the same result for infinitely generated Gorenstein projective bimodules.
\end{abstract}


\section{Introduction}

Auslander\cite{Auslander1967} initiated Gorenstein homological algebra by introducing modules of G-dimension zero over a Noetherian commutative local ring, which coincides the maximal Cohen-Macaulay modules over Gorenstein commutative local ring. Later, Auslander and Bridger \cite{Auslander1969}  generalized these modules to two-sided Noetherian rings, which are now called Gorenstein projective modules. The notion of Gorenstein projective modules over an arbitrary ring was introduced by Enochs and Jenda in \cite{Enochs1995}. These modules identify with maximal Cohen-Macaulay modules over Gorenstein Noetherian rings in the work of Buchweitz in \cite{Buchweitz1987a}.

Not only the place of Gorenstein projective modules in Gorenstein homological algebra is the same as that of projective modules in homological algebra, but the Gorenstein projective modules (namely, maximal Cohen-Macaulay modules) also play an important role in singularity theory. Buchweitz in \cite{Buchweitz1987a} proved that the stable category of the category of Gorenstein projective modules over a Gorenstein Noetherian ring is triangle equivalent to the stable derived category which is just the singularity category defined by Orlov \cite{Orlov2004}. Note that Happel obtained the same result for Gorenstein algebras independently \cite{Happel1991b}. In particular, CM-finiteness (A ring is CM-finite if it has only finitely many isoclasses of indecomposable Gorenstein projective modules) is closely related to simple singularities, see \cite{Knorrer1987,Buchweitz1987b,Christensen2008}. For CM-finite Artin algebras, there is an Auslander-type result for Gorenstein projective modules \cite{Chen08}.

Thus, one of the most important tasks is to describe Gorenstein projective modules, especially in non-commutative case. Up to now, there are some partial results, mainly concentrated on Artin algebras, such as $T_2$-extension of an Artin algebra \cite{Li2010}, upper triangular matrix Artin algebra \cite{Xiong2012,Zhang2013,EshraghiLi2016}, Artin algebras with radical square zero \cite{Chen2012f,Ringel2012a}, Nakayama algebras \cite{Ringel2013b}, monomial algebras \cite{Chen2015}, and some cases of tensor product of two algebras---we will show it explicitly below. In this paper, we study Gorenstein projective modules over the tensor product of two algebras. This has been done only in some special cases. Li and Zhang's result of $T_2$-extension can be viewed as describing the Gorenstein projective modules over $A\otimes_kkQ$, where $Q$ is the Dynkin quiver of type $A_2$. Ringel and Zhang \cite{Ringel2011-preprint} completely described the Gorenstein projective modules of $kQ\otimes_kk[\epsilon]$, where $Q$ is a finite acyclic quiver and $k[\epsilon]=k[x]/(x^2)$. In \cite{Luo2013}, Luo and Zhang studied the Gorenstein projective modules over $A\otimes_kkQ$, where $Q$ is a finite acyclic quiver. Later, Luo and Zhang \cite{Luozhang2015} generalize it to $Q$ with monomial relations. The approaches of \cite{Li2010,Luo2013,Luozhang2015} mainly use monomorphism categories. In this paper, besides via monomorphism categories for which we give a general homological definition, we give another approach to the Gorenstein projective modules over tensor product of algebras, via filtration categories.

\medskip

-{\bf via monomorphism category}. The category of monomorphisms in a module category can be viewed as the first example of monomorphism category. This goes back to G.Birkhoff's problem \cite{Birkhoff1934} of classifying all subgroups of abelian $p$-groups by matrices. This question were related to representations of partial order sets, such as \cite{Arnold2000,Simson2002, Simson2018}. Ringel and Schmidmeier \cite{Ringel2006b,Ringel2008c,Ringel2008b} renewed this subject by studying the representation type and the Auslander-Reiten theory of the submodule category for an Artin algebra, and in particular for $k[x]/(x^n)$. Kussin, Lenzing and Meltzer \cite{Kussin2013a,Kussin2013b} related submodule categories to weighted projective lines and singularity theory. Chen \cite{Chen2011c,Chen2012d} studied these categories from the viewpoint of triangulated categories and Gorenstein homological algebra.

For an algebra $A$, the category of monomorphisms in the module category of $A$ can be viewed as a full subcategory of the module category of the tensor product $A\otimes_kkA_2$, where $A_2$ is the quiver $1\to 2$. Zhang \cite{Zhang2011} introduced monomorphism categories of type $A_n$, say the category of $n-1$ successive monomorphisms, which can be viewed as a full subcategory of the module category of $A\otimes_kkA_n$, where $A_n$ is the quiver $1\to 2\to\cdots\to n$. The Auslander-Reiten theory of this category was studied in \cite{Xiong2014}. In \cite{Luo2013}, Luo and Zhang generalized the above notion and introduced monomorphism categories over finite acyclic quivers, and then over finite acyclic quivers with monomial relations \cite{Luozhang2015}. Recently, the Ringel-Schmidmeier-Simson (RSS for short) equivalence on monomorphism categories is introduced by Zhang et al. \cite{Xiong2017, Xiong2018}. Based on the combinatorial information of the quiver (and the monomial relations), the monomorphism category was applied to describe Gorenstein projective modules over the tensor product of an algebra and a path algebra (modulo the ideal generated by the monomial relations) over a field.

\smallskip

In this paper, we define monomorphism categories over an arbitrary finite dimensional algebra, using homological conditions (see Definition \ref{Def: homological monic representations}). Our definition is equivalent to the previous ones when restricting to finite acyclic quivers (with monomial relations). The homological definition seems to simplify many arguments. For two finite dimensional $k$-algebras $A$ and $B$, we denote the monomorphism category of $B$ over $A$ by $\Mon(B, A)$. For a full additive subcategory of $A$-mod $\mathscr{L}$, we can also define the full subcategory $\Mon(B, \mathscr{L})$ of $\Mon(B, A)$. We show that $\Mon(B, A\Gp)$ is a Frobenius exact category and is a full subcategory of $\Lambda\Gp$ where $\Lambda=A\otimes_kB$ (see Proposition \ref{proposition-Frobenius} and \ref{proposition-monic-in-gorenstein}). Furthermore, we have the following main result.

\medskip

{\bf\noindent Theorem A} (Theorem \ref{theorem-gldimB-finite-monic=gorenstein} and \ref{theorem-Gorenstein-CM-free}) {\it Let $A$ and $B$ be two finite dimensional $k$-algebras. Assume either $A$ or $B$ is Gorenstein. Then $(A\otimes_kB)\Gp=\Mon(B, A\Gp)$ if and only if $B$ is CM-free.}

\smallskip

Thus once the modules in $A\Gp$ are known, it is relatively easy to describe the modules in $\Mon(B, A\Gp)$. This gives a satisfactory answer in this case.

\medskip

-{\bf via filtration category}. In the definition of monomorphism categories $\Mon(B, A)$, the role of $B$ and $A$ are not symmetric (see Example \ref{example-not-symmetric}). This motivates us to consider the filtration category $\addfilt(A\Gp\otimes_k B\Gp)$, that is, the class of $A\otimes_kB$-modules which are direct summands of iterated extensions of tensor products of Gorenstein projective $A$-modules and Gorenstein projective $B$-modules. The modules in the filtration category $\addfilt(A\Gp\otimes_k B\Gp)$ are always Gorenstein projective. The converse does not hold in general (see Example \ref{example-filtration-fail} below). However, in case that both algebras are Gorenstein, or one of the algebras is a triangular algebra, we obtain the following main results:

\medskip

{\bf\noindent Theorem B} (Theorem \ref{theorem-Gorenstein-tensor-Gorenstein}) {\it Let $A$ and $B$ be Gorenstein algebras. Assume that $k$ is a splitting field for $A$ or $B$. Then $$(A\otimes_kB)\Gp=\addfilt(A\Gp\otimes_k B\Gp).$$}

\medskip

{\bf\noindent Theorem C} (Theorem \ref{theorem-B-triangular}) {\it Let $A$ and $B$ be two finite dimensional $k$-algebras. Assume that $B$ is triangular and $k$ is a splitting field for $B$. Then the  three categories $(A\otimes_kB)\Gp$, $\Mon(B, A\Gp)$ and $\filt(A\Gp\otimes B\Gp)$  are the same, where $\filt(A\Gp\otimes B\Gp)$ is the smallest full subcategory of $\modcat{(A\otimes_kB)}$ closed under extensions.}

\vskip5pt
We can even show that a similar result holds for infinitely generated Gorenstein projective modules. Its proof essentially uses Quillen's powerful small object argument.

\medskip

This paper is organized as follows. Section 2 contains some preliminary materials, including basic facts about Gorenstein projective modules. In Section 3, we introduce our (homological version of) monomorphism categories and study their basic properties. The next two sections are the heart of this paper. We describe Gorenstein projective bimodules using monomorphism categories and using filtration categories. Finally, we deal with infinitely generated Gorenstein projective bimodules by Quillen's small object argument in Subsection 5.2.

We should remark that Dawei Shen \cite{Shen2016} also obtained some results  of this paper independently, in particular, Proposition~\ref{Prop-gorenstein-projective-bimodules} (2) and Theorem~\ref{theorem-gldimB-finite-monic=gorenstein}.

\section {Preliminaries}

In this section, we recall some basic definitions and facts that needed in our later proofs.

\subsection{Convention}
Throughout this paper, $k$ is a fixed field, and all algebras are finite dimensional $k$-algebras. For simplicity, the tensor product $\otimes_k$ will be written as $\otimes$.

Let $A$ be an algebra. By $\modcat{A}$ we denote the category of finitely generated left $A$-modules, and by {\sf mod}-$A$ the category of finitely generated right $A$-modules. Unless specified otherwise, all modules are finitely generated left modules. The full subcategory of $\modcat{A}$ consisting of projective modules is denoted by $\projcat{A}$.  The functor $\D:=\Hom_k(-, k)$ is the usual duality, and the functor $\Hom_A(-, {}_AA): \modcat{A}\lra\modcat{A\opp}$ will be denoted by $(-)^*$.  The projective dimension of an $A$-module $X$ is denoted by $\pd ({}_AX)$. The global dimension of $A$, which is the supremum of the projective dimensions of all $A$-modules, is denoted by $\gldim(A)$.

A complex over $\modcat{A}$ is a sequence of $A$-module homomorphisms
$\cpx{X}: \cdots\lra X^i\lraf{d^i}X^{i+1}\lraf{d^{i+1}}X^{i+2}\lra\cdots$ such that $d^{i+1}d^i=0$ for all $i\in\mathbb{Z}$.  The $i$-th homology of $\cpx{X}$ is denoted by $H^i(\cpx{X})$. Let $M_A$ be a right $A$-module, we use $M\otimes_A\cpx{X}$ to denote the complex $\cdots\lra M\otimes_AX^i\lraf{1\otimes d^i}M\otimes_AX^{i+1}\lra\cdots$.  Similarly, for a left $A$-module ${}_AN$, we denote by $\Hom_A(\cpx{X}, {}_AN)$ the complex $\cdots\lra\Hom_A(X^{i+1}, {}_AN)\lraf{\Hom_A(d^{i}, N)}\Hom_A(X^i, N)\lra\cdots$. Note that the $i$-th term of $\Hom_A(\cpx{X}, N)$ is $\Hom_A(X^{-i}, N)$ for all $i\in\mathbb{Z}$.

\medskip
The following lemma collects some basic homological facts needed frequently in our proofs.

\begin{lem}\label{lemma-basic-homological-isomorphism}
 Let $A$ and $B$ be finite dimensional algebras.  Suppose that ${}_AX\in\modcat{A}$, $Y_A\in\mbox{\sf mod-}A$, ${}_BU\in\modcat{B}$, $V_B\in\mbox{\sf mod-}B$ and ${}_BM_A$ is a $B$-$A$-bimodule.  Then we have the following.
\begin{itemize}\setlength\itemsep{0pt}
\item[$(1)$.] There is a natural isomorphism $\D\Tor_i^A(X, Y)\cong \Ext_A^i(Y, \D(X))$ for all $i\geq 0$.

\item[$(2)$.] If $\Tor_i^A(M, X)=0=\Tor_i^B(V, M)$ for all $i>0$, then $\Tor_i^B(V, M\otimes_AX)\cong\Tor_i^A(V\otimes_BM, X)$ for all $i\geq 0$.

\item[$(3)$.] If $\Tor_i^A(M, X)=0=\Ext_B^i(M, U)$ for all $i>0$, then $\Ext_B^i(M\otimes_AX, U)\cong\Ext_A^i(X, \Hom_B(M, U))$ for all $i\geq 0$.
\end{itemize}
\end{lem}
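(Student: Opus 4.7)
All three statements are classical change-of-rings isomorphisms, and the plan is to prove them uniformly by combining standard adjunctions with the \emph{acyclic-resolution principle}: the derived functor of $F$ may be computed on any $F$-acyclic resolution, not only on a projective (or injective) one. This principle is the key tool; without it one would be forced to build an explicit $B$-projective resolution of $M\otimes_A X$, which the hypotheses do not supply. Taking it as a black box, each part reduces to a one-line adjunction combined with a ``summand of a free module'' argument that transports the vanishing hypotheses from $M$ to $M\otimes_A P_i$.

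For (1), I would first fix a projective resolution $P_\bullet\to Y$ in $\modcat{A}$, so that $\Tor_i^A(X,Y)=H_i(X\otimes_A P_\bullet)$. Since $\D=\Hom_k(-,k)$ is exact it commutes with $H_i$, and the tensor--hom adjunction gives a natural isomorphism of complexes $\D(X\otimes_A P_\bullet)\cong \Hom_A(P_\bullet,\D X)$. Passing to cohomology yields $\D\Tor_i^A(X,Y)\cong\Ext_A^i(Y,\D X)$.

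For (2), I would take a projective resolution $P_\bullet\to X$ in $\modcat{A}$. The hypothesis $\Tor_i^A(M,X)=0$ for $i>0$ makes $M\otimes_A P_\bullet\to M\otimes_A X$ a $B$-module resolution. Each $P_i$ is a direct summand of some $A^{(n_i)}$, hence $M\otimes_A P_i$ is a direct summand of $M^{(n_i)}$, and the hypothesis $\Tor_j^B(V,M)=0$ then forces $\Tor_j^B(V,M\otimes_A P_i)=0$ for $j>0$. This exhibits $M\otimes_A P_\bullet$ as a $V\otimes_B(-)$-acyclic resolution, so it computes $\Tor_\bullet^B(V,M\otimes_A X)$. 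The associativity isomorphism $V\otimes_B(M\otimes_A P_\bullet)\cong (V\otimes_B M)\otimes_A P_\bullet$, whose right-hand side computes $\Tor_\bullet^A(V\otimes_B M,X)$, closes the argument.

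Statement (3) follows the same template: with $P_\bullet\to X$ as above, the same summand argument, now combined with the hypothesis $\Ext_B^j(M,U)=0$, gives $\Ext_B^j(M\otimes_A P_i,U)=0$ for $j>0$, so $M\otimes_A P_\bullet$ is a $\Hom_B(-,U)$-acyclic resolution of $M\otimes_A X$ and thus computes $\Ext_B^\bullet(M\otimes_A X,U)$. The termwise tensor--hom adjunction $\Hom_B(M\otimes_A P_\bullet,U)\cong \Hom_A(P_\bullet,\Hom_B(M,U))$ then identifies the cohomology with $\Ext_A^\bullet(X,\Hom_B(M,U))$, completing the proof.
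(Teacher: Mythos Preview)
Your proposal is correct. The paper gives no actual proof of this lemma: it simply says that (1) is well-known and refers to Cartan--Eilenberg, Chapter~IX, Theorems~2.8 and~2.8a for (2) and (3). The argument you have written out---acyclic resolutions combined with the relevant tensor--hom adjunctions---is precisely the standard proof one finds there, so there is no meaningful difference in approach. One trivial remark: in your proof of (1), $Y_A$ is a \emph{right} $A$-module, so the projective resolution $P_\bullet\to Y$ should be taken in $\mbox{\sf mod-}A$ rather than $\modcat{A}$, and the tensor computing $\Tor$ is $P_\bullet\otimes_A X$ rather than $X\otimes_A P_\bullet$; this is only a matter of notation and does not affect the argument.
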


\begin{proof}
(1) is well-known. (2) and (3) can be found, for example, in \cite[Chapter IX, Theorem 2.8,  2.8a]{Cartan1999}
\end{proof}

\subsection{Tensor products of algebras}
In this subsection, we collect some basic facts on modules over tensor products of algebras.

\begin{lem}\label{lemma-tensorModuleHom}
Let $A$ and $B$ be finite dimensional $k$-algebras, and let $\Lambda:=A\otimes B$. Then we have the following.
\begin{itemize}\setlength\itemsep{0pt}
 \item[$(1)$.]  ${\rm Hom}_{\m}(X_1\otimes Y_1, X_2\otimes Y_2)\cong {\rm Hom}_{A}(X_1, X_2)\otimes {\rm Hom}_{B}(Y_1, Y_2)$, where $X_1, \ X_2\in \modcat{A}$, $Y_1, \ Y_2\in \modcat{B}$.

 \item[$(2)$.] $\D(X\otimes Y)\cong \D(X)\otimes  \D(Y)$, where $X\in \modcat{A}$, $Y\in\modcat{B}$.

 \item[$(3)$.] $(W\otimes V)\otimes_{A\otimes  B}(X\otimes  Y)\cong (W\otimes_A X)\otimes (V\otimes_BY)$, where $W\in\mbox{\sf mod-}A$, $V\in\mbox{\sf mod-}B$, $X\in\modcat{A}$ and $Y\in\modcat{B}$.

 \item[$(4)$.] Let $V$ be a right $B$-module and $X$ a left $\Lambda$-module. Then there exists a natural isomorphism  of left $A$-modules
$$(A\otimes  V)\otimes_\Lambda X\cong   {}_A(V\otimes_B X), \  \  (a\otimes v)\otimes x\mapsto v\otimes (a\otimes 1)x.$$

 \item[$(5)$.] Let $V$ be a left $B$-module, $T$ a left $A$-module and $X$ a left $\Lambda$-module. Then there exists a natural isomorphism
$$\Hom_A((A\otimes  \D(V))\otimes_\Lambda X, T)\cong \Hom_\Lambda (X, T\otimes  V),$$
that is , there exists an adjoint pair $((A\otimes  \D(V))\otimes_\Lambda -, -\otimes  V)$.

 \item[$(6)$.]  Let $P$ be a finitely generated projective left $B$-module. Then there exists natural isomorphisms of functors
 $$(A\otimes  P^*)\otimes_\Lambda -\cong {}_A(P^*\otimes_B -)\cong {}_A\Hom_B(P, -): \modcat{\Lambda}\ra \modcat{A}$$
 and these functors are right adjoint to the functor  $-\otimes P: \modcat{A}\ra\modcat{\Lambda}$, that is,  for any $T\in\modcat{A}$ and $X\in\modcat{\m}$, there are functorial isomorphisms
$$\Hom_{\m}(T\otimes P, \, X)\cong\Hom_A(T, (A\otimes P^*)\otimes_{\m}X)\cong \Hom_A(T,  P^* \otimes_{B}X)\cong \Hom_A(T, \Hom_B(P, X)).$$
\end{itemize}
 \end{lem}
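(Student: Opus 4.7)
The plan is to prove the six parts in order, exploiting that $k$ is a field (so $\otimes$ is exact) and that all modules are finitely generated. Each asserted isomorphism is given by an explicit natural map on elementary tensors; the work is to check bijectivity.

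For (1), I would define $\alpha(f\otimes g)(x\otimes y) = f(x)\otimes g(y)$. When $X_1 = A$ and $Y_1 = B$, both sides identify canonically with $X_2\otimes Y_2$, so additivity proves the claim for all finitely generated free modules in the first slot. A finite projective presentation of $X_1$ (resp.\ $Y_1$), together with the left exactness of $\Hom$ in the first variable and the exactness of $\otimes_k$, extends the result to arbitrary finitely generated modules. Part (2) is then obtained from $\D X = \Hom_k(X,k)$ via the $k$-level tensor--hom adjunction and the finite-dimensionality of $Y$, with the $\Lambda$-actions on both sides matching by a direct check.

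For (3), I would define the natural map $(w\otimes v)\otimes (x\otimes y)\mapsto (w\otimes x)\otimes (v\otimes y)$, verify that it descends to the tensor product over $\Lambda$, and reduce to the case $W=A$, $V=B$, $X=A$, $Y=B$ (where both sides are just $k$) by right exactness of $\otimes$ in each variable. Part (4) is a variant in which $X$ is an arbitrary $\Lambda$-module. The strategy is to check the claim first on $X = \Lambda$, where it is immediate, and then extend by right exactness of both sides in $X$, using a projective presentation of $X$ in $\modcat{\Lambda}$ and the fact that projective $\Lambda$-modules are direct summands of sums of $\Lambda$ itself. Part (5) is obtained by combining (4) with the canonical $\Lambda$-module isomorphism $T\otimes V \cong \Hom_k(\D V, T)$ valid for finite-dimensional $V$, and the tensor--hom adjointness $\Hom_A(\D V\otimes_B X, T)\cong \Hom_\Lambda(X, \Hom_k(\D V, T))$. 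Part (6) is the specialization of (5) to $V = P$ projective, combined with the standard natural isomorphism $P^*\otimes_B -\cong \Hom_B(P, -)$ (which is immediate for $P = B$ and extends by additivity and exactness).

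The main obstacle is the transition from the symmetric statement (3), which concerns $\Lambda$-modules of the decomposable form $X\otimes Y$, to (4) and (5), which allow arbitrary $\Lambda$-modules. The key observation is that $\otimes_k$ being exact makes every finitely generated $\Lambda$-module the quotient of a finite direct sum of copies of $\Lambda = A\otimes B$ itself; combined with right exactness of both sides of (4) in the third variable, the isomorphism on the basic object $\Lambda$ propagates automatically to all of $\modcat{\Lambda}$, after which (5) and (6) are formal consequences.
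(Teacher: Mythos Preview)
Your treatment of parts (1)--(5) is correct and close in spirit to the paper's. For (1)--(3) the paper simply cites Cartan--Eilenberg; for (4) the paper writes down the explicit inverse $v\otimes x\mapsto (1\otimes v)\otimes x$ rather than arguing by right exactness, but your approach is equally valid. For (5), your route via (4) followed by a mixed tensor--hom adjunction is essentially a reordering of the paper's chain of isomorphisms, which applies the $\Lambda$-level tensor--hom adjunction first and then simplifies $\Hom_A(A\otimes\D(V),T)$.

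There is, however, a genuine gap in your derivation of (6). You say that (6) is the specialization of (5) to $V=P$ projective, but this is not so: statement (5) exhibits $-\otimes V$ as a \emph{right} adjoint (its left adjoint being $(A\otimes\D(V))\otimes_\Lambda-$), whereas (6) asserts that $-\otimes P$ is a \emph{left} adjoint (its right adjoint being $(A\otimes P^*)\otimes_\Lambda-$). These are distinct adjunctions, and for a non-selfinjective $B$ one has $P^*=\Hom_B(P,B)\not\cong\D(P)$ in general, so setting $V=P$ in (5) does not produce the functor appearing in (6). Concretely, (5) with $V=P$ yields $\Hom_A\big((A\otimes\D(P))\otimes_\Lambda X,\,T\big)\cong\Hom_\Lambda(X,\,T\otimes P)$, which is not the isomorphism $\Hom_\Lambda(T\otimes P,\,X)\cong\Hom_A(T,\,\Hom_B(P,X))$ claimed in (6). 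To fix this: the functor isomorphisms in (6) come from applying (4) with $V=P^*$ (not $\D(P)$) together with your standard isomorphism $P^*\otimes_B-\cong\Hom_B(P,-)$; the adjointness then follows either from a direct tensor--hom argument, or, as the paper does, by identifying $A\otimes P^*\cong\Hom_\Lambda(A\otimes P,\Lambda)$ via (1) and using that $A\otimes P$ is finitely generated projective over $\Lambda$.
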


\begin{proof}
(1)-(3) are standard, their proofs can be found, for example, in \cite[Chapter XI]{Cartan1999}.

$(4)$. One can verify that the map is a well defined $A$-module homomorphism  and its inverse map is given by $v\otimes x\mapsto (1\otimes v)\otimes x$.

$(5)$.\  This follows from the series of isomorphisms
$$\begin{array}{rcl}\Hom_A((A\otimes  \D(V))\otimes_\Lambda X, T)&\cong& \Hom_\Lambda (X, \Hom_A(A\otimes  \D(V), T))\\
&\cong&
\Hom_\Lambda (X, \Hom_k(\D(V), \Hom_A(A, T)))\\
&\cong & \Hom_\Lambda (X, \Hom_k(\D(V), T))\\
&\cong& \Hom_\Lambda (X,  T\otimes  V).\end{array} $$

$(6)$.\  The isomorphism $(A\otimes  P^*)\otimes_\Lambda -\cong {}_A(P^*\otimes_B -): \modcat{\Lambda}\ra \modcat{A}$ follows from (4) and the isomorphism $ {}_A(P^*\otimes_B -)\cong {}_A\Hom_B(P, -)$ holds because $P$ is a finitely generated left $B$-module.

It follows from (1) that $A\otimes P^*\cong\Hom_{\m}(A\otimes P, \m)$. Then $(A\otimes P^*)\otimes_{\m}X\cong\Hom_{\m}(A\otimes P, \m)\otimes_{\m}X\cong\Hom_{\m}(A\otimes P, X)$. Hence we have the following isomorphisms
$$\begin{array}{ll}
\Hom_A(T, (A\otimes P^*)\otimes_{\m}X) & \cong\Hom_A(T, \Hom_{\m}(A\otimes P, X))\\
& \cong\Hom_{\m}((A\otimes P)\otimes_A  T,\, X) \\
& \cong\Hom_{\m}((A\otimes_A  T)\otimes P,\, X) \\
& \cong\Hom_{\m}(T\otimes P,\, X).
\end{array}$$
This finishes the proof.
\end{proof}

The following result is well-known. However, it seems hard to find a precise reference in the literature. Here we provide a proof
 for the convenience of the reader.

\begin{lem}\label{Lem-modules-over-tensor-product}
Let $A$ and $B$ be finite dimensional $k$-algebras, and let $\Lambda:=A\otimes B$.
\begin{itemize}\setlength\itemsep{0pt}
\item[$(1)$.] If ${}_AX\in\modcat{A}$ and ${}_BY\in\modcat{B}$ are projective (respectively, injective), then so is the $\m$-module $X\otimes Y$.

\item[$(2)$.]  Suppose that $k$ is a splitting field for $B$, that is, every simple $B$-module has endomorphism algebra $k$.  If both ${}_AX\in\modcat{A}$ and ${}_BY\in\modcat{B}$ are indecomposable projective (respectively, indecomposable injective, simple), then $X\otimes Y$ is an indecomposable projective (respectively, indecomposable injective, simple) $\m$-module.

 \item[$(3)$.]  Suppose that $k$ is a splitting field for $B$. Then all the indecomposable projective (respectively, indecomposable injective, simple) $\m$-modules are of the form $X\otimes Y$, where ${}_AX\in\modcat{A}$ and ${}_BY\in\modcat{B}$ are indecomposable projective (respectively, indecomposable injective, simple) modules.
\end{itemize}
\end{lem}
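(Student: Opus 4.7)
The plan is to treat the three parts in order, using duality to swap projective and injective statements, and using Lemma~\ref{lemma-tensorModuleHom} as the main workhorse.

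For part~(1), the projective case reduces to the case $X={}_AA$ and $Y={}_BB$, where $X\otimes Y=\Lambda$ is free; the general case follows since tensoring respects direct sums and direct summands. For the injective case I would invoke the duality $\D$: an $A$-module $X$ is injective iff $X\cong \D(W)$ for some projective right $A$-module $W$, and similarly $Y\cong\D(V)$. By Lemma~\ref{lemma-tensorModuleHom}(2), $X\otimes Y\cong \D(W\otimes V)$, and $W\otimes V$ is a projective right $\Lambda$-module by the same reasoning as in the projective case applied on the right. Hence $X\otimes Y$ is injective.

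For part~(2), the main tool is the isomorphism
\[
\End_\Lambda(X\otimes Y)\cong \End_A(X)\otimes \End_B(Y)
\]
from Lemma~\ref{lemma-tensorModuleHom}(1). For indecomposable projective (or injective) $X$ and $Y$, the finite-dimensional endomorphism algebras $E:=\End_A(X)$ and $F:=\End_B(Y)$ are local. Since $k$ is a splitting field for $B$, we have $F/\rad(F)\cong k$, so
\[
(E\otimes F)/(E\otimes\rad(F)+\rad(E)\otimes F)\cong E/\rad(E)\otimes k\cong E/\rad(E),
\]
which is a division ring; this forces $E\otimes F$ to be local, so $X\otimes Y$ is indecomposable. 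For the simple case, I would argue more directly: if $X$ is a simple $A$-module then $A/\mathrm{ann}(X)\cong M_n(D)$ with $D=\End_A(X)\opp$ a division algebra, while the splitting hypothesis gives $B/\mathrm{ann}(Y)\cong M_m(k)$. The annihilator of $X\otimes Y$ in $\Lambda$ contains $\mathrm{ann}(X)\otimes B+A\otimes\mathrm{ann}(Y)$, and the quotient is $M_n(D)\otimes M_m(k)\cong M_{nm}(D)$, a simple algebra of which $X\otimes Y$ is a faithful module of the correct dimension. Therefore $X\otimes Y$ is simple.

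For part~(3), let $\{e_1,\ldots,e_n\}$ and $\{f_1,\ldots,f_m\}$ be complete systems of primitive orthogonal idempotents in $A$ and $B$ respectively. Then $\{e_i\otimes f_j\}$ is a complete set of orthogonal idempotents in $\Lambda$ that sums to $1$, and each $e_i\otimes f_j$ is primitive by part~(2) applied to $\Lambda(e_i\otimes f_j)\cong Ae_i\otimes Bf_j$. Hence every indecomposable projective $\Lambda$-module is of the form $Ae_i\otimes Bf_j$, and, via tops, every simple $\Lambda$-module is of the form $(Ae_i/\rad Ae_i)\otimes(Bf_j/\rad Bf_j)$. The indecomposable injective case follows by applying the duality $\D$ to a complete system of primitive idempotents on the right side, or equivalently by noting that the splitting hypothesis for $B$ is equivalent to the splitting hypothesis for $B\opp$. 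The main obstacle is the tensor-of-locals step in part~(2), where the splitting hypothesis is genuinely needed; the rest is essentially bookkeeping with the formulas of Lemma~\ref{lemma-tensorModuleHom} and the duality $\D$.
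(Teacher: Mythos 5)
Your argument for parts (1), the projective/injective cases of (2), and part (3) is essentially the paper's own: the endomorphism-algebra isomorphism $\End_\Lambda(X\otimes Y)\cong\End_A(X)\otimes\End_B(Y)$ drives indecomposability, duality handles injectives, and the decomposition of ${}_\Lambda\Lambda$ (equivalently, your idempotents $e_i\otimes f_j$) plus Krull--Schmidt and tops handles part (3). One point needs to be made explicit in your locality step: from the fact that $(E\otimes F)/J$ is a division ring, where $J=E\otimes\rad(F)+\rad(E)\otimes F$, you cannot conclude that $E\otimes F$ is local unless you also know $J\subseteq\rad(E\otimes F)$ --- the quotient of $k\times k$ by a maximal ideal is $k$, yet $k\times k$ is not local. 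The paper closes this by observing that $J$ is nilpotent (it is a sum of two nilpotent ideals whose products land in $\rad(E)\otimes\rad(F)$), which forces $J=\rad(E\otimes F)$; you should add that one line.

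Your treatment of the simple case in (2) is a genuinely different route from the paper's. The paper first shows $X\otimes Y$ is indecomposable (its endomorphism algebra is the division algebra $\End_A(X)$), then computes $\rad(\Lambda)=\rad(A)\otimes B+A\otimes\rad(B)$ using that $B/\rad(B)$ is a product of matrix algebras over $k$, concludes $X\otimes Y$ is semisimple, and combines the two. You instead pass to the quotient $\bigl(A/\mathrm{ann}(X)\bigr)\otimes\bigl(B/\mathrm{ann}(Y)\bigr)\cong M_n(D)\otimes M_m(k)\cong M_{nm}(D)$ and identify $X\otimes Y$ as the unique simple module over this simple Artinian algebra by a dimension count. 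This is correct (the splitting hypothesis is exactly what guarantees the second factor is $M_m(k)$ rather than $M_m(D')$, which keeps the tensor product simple) and it bypasses both the indecomposability step and the radical computation; the paper's version has the side benefit of recording the formula for $\rad(\Lambda)$, but as a proof of the lemma your route is self-contained and slightly more economical.
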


\begin{proof}
$(1)$. The statement  is clear.

$(2)$.  Suppose that $_AX$ is an indecomposable projective $A$-module, and $_BY$ is an indecomposable projective $B$-module. Since $k$ is a splitting field for $B$, the endomorphism algebra $\End_B(Y)$ is a finite dimensional local algebra with
$$\End_B(Y)/\rad\big(\End_B(Y)\big)\cong \End_B\big(Y/\rad(Y)\big)\cong k.$$
By Lemma \ref{lemma-tensorModuleHom} (1), there is an algebra isomorphism $\End_{\m}(X\otimes Y)\cong\End_A(X)\otimes \End_B(Y)$. For simplicity, we write $E_X$ and $E_Y$ for $\End_A(X)$ and $\End_B(Y)$, respectively. Note that the ideal
$$I:=E_X\otimes\rad(E_Y)+\rad(E_X)\otimes E_Y$$
 of $E_X\otimes E_Y$ is nilpotent, and by the $3\times 3$ diagram obtained by tensoring the short exact sequences $0\ra \rad(E_X)\ra E_X\ra E_X/\rad(E_X)\ra 0$ and $0\ra \rad(E_Y)\ra E_Y\ra E_Y/\rad(E_Y)\ra 0$ over $k$, one see that
 $$(E_X\otimes E_Y)/I\cong E_X/\rad(E_X)\otimes E_Y/\rad(E_Y)\cong  E_X/\rad(E_X)$$
is a local algebra.  It follows that $\End_{\m}(X\otimes Y)$ is a local algebra. This implies that $X\otimes Y$ is an indecomposable $\m$-module.

\smallskip
 When $X$ and $Y$ are injective. The proof can be completed by combining (1) and Lemma \ref{lemma-tensorModuleHom}(2).

\smallskip
   Now suppose that $X$ and $Y$ are simple. Then $$\End_{\m}(X\otimes Y)\cong \End_A(X)\otimes \End_B(Y)\cong \End_A(X)$$ is a division algebra. Hence $X\otimes Y$ is an indecomposable $\m$-module.  Note that $B/\rad(B)\cong M_{n_1}(k)\times\cdots M_{n_r}(k)$ is a direct product of full matrix algebras over $k$, since $k$ is a splitting field for $B$. Thus, the algebra $\bar{\m}:=(A/\rad(A))\otimes (B/\rad(B))$, which is the quotient algebra of $\m$ modulo the ideal $\rad(A)\otimes B+A\otimes \rad(B)$, is Morita equivalent to a direct product of copies of $A/\rad(A)$ which is semi-simple. This implies that $$\rad(\m)=\rad(A)\otimes B+A\otimes \rad(B).$$  It is easy to see that $X\otimes Y$ is annihilated by the ideal $\rad(A)\otimes B+A\otimes \rad(B)$, indicating that $X\otimes Y$ is a semi-simple $\m$-module. We have already shown that $X\otimes Y$ is indecomposable. Therefore $X\otimes Y$ is a simple $\m$-module.

$(3)$.  Write
$${}_AA=\bigoplus_{i=1}^nX_i, \quad {}_BB=\bigoplus_{j=1}^mY_j$$
as direct sums of indecomposable projective modules. Then
$$_{\m}\m=A\otimes B=\bigoplus_{i=1}^n\bigoplus_{j=1}^mX_i\otimes Y_j.$$
For each pair of  $i, j$, the $\Lambda$-module $X_i\otimes Y_j$ is  indecomposable and projective by (2).
 Since every indecomposable projective $\m$-module is isomorphic to a direct summand of $_{\m}\m$, we are done. The proof for injective modules is similar.  Each simple $\m$-module must be the unique simple quotient of an indecomposable projective $\m$-module, say $X\otimes Y$, where ${}_AX$ and ${}_BY$ are indecomposable projective. The unique simple quotient of $X\otimes Y$ is $X/\rad(X)\otimes Y/\rad(Y)$ by (2). This finishes the proof.
\end{proof}

\subsection{Gorenstein projective modules}
Let $A$ be an algebra, and let $G$ be an $A$-module.
A {\em complete projective resolution} of $G$ is an exact complex
$$\cpx{P}: \quad\cdots\lra P^{i}\lraf{d^i} P^{i+1}\lraf{d^{i+1}} P^{i+2}\lra\cdots$$
 of modules in $\projcat{A}$ with $G=\Ker\, d^1$ such that $\Hom_A(\cpx{P}, {}_AA)$ is again exact.  If $G$ admits a complete projective resolution, then $G$ is called a {\em Gorenstein projective} $A$-module. The full subcategory of $A\modcat$ consisting of all Gorenstein projective $A$-modules is denoted by $A\Gp$.

\smallskip
The following lemma is well-known; for the convenience of the reader, we give a proof.

\begin{lem} \label{lemma-complete-resolution-basicProperty}
 Let $A$ and $B$ be algebras, and let $\cpx{P}$ be an exact complex of projective left $A$-modules.
\begin{itemize}
  \setlength\itemsep{0pt}
  \item[$(1)$.] If $M_A\in\mbox{\rm mod-}A$ has finite projective dimension,  then $M\otimes_A\cpx{P}$ is exact.

 \item[$(2)$.] If ${}_AN\in\modcat{A}$ has finite injective dimension, then $\Hom_A(\cpx{P}, N)$ is exact.

  \item[$(3)$.] If $\Hom_A(\cpx{P}, {}_AA)$ is exact and $_AN\in\modcat{A}$ has finite projective dimension, then $\Hom_A(\cpx{P}, N)$ is exact.

  \item[$(4)$.] If $\Hom_A(\cpx{P}, {}_AA)$ is exact and $M_A\in\mbox{\rm mod-}A$ has finite injective dimension, then $M\otimes_A\cpx{P}$ is exact.
\end{itemize}
\end{lem}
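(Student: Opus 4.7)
The plan is to prove (1) and (2) by dimension shifting along the cocycles of $\cpx{P}$, to prove (3) by induction on projective dimension, and finally to reduce (4) to (3) via the $k$-duality $\D$.

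For (1) and (2), set $Z^n := \Ker d^n$. The exactness of $\cpx{P}$ together with the projectivity of the $P^n$ produces short exact sequences $0 \to Z^n \to P^n \to Z^{n+1} \to 0$, so that exactness of $M\otimes_A\cpx{P}$ at position $n$ is equivalent to $\Tor_1^A(M, Z^{n+1})=0$. Dimension shifting along these sequences gives $\Tor_1^A(M, Z^{n+1}) \cong \Tor_{d+1}^A(M, Z^{n+d+1})$ for every $d\geq 0$, and the choice $d = \pd M_A$ kills the right-hand side. The argument for (2) is identical with $\Ext$ in place of $\Tor$: exactness of $\Hom_A(\cpx{P},N)$ at the relevant spot reduces to $\Ext_A^1(Z^{n+1}, N) \cong \Ext_A^{d+1}(Z^{n+d+1}, N) = 0$ for $d = \id({}_AN)$.

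For (3), I would induct on $d := \pd({}_AN)$. When $d=0$ the module $N$ is a direct summand of some $A^r$, so $\Hom_A(\cpx{P},N)$ is a direct summand of $\Hom_A(\cpx{P},A)^r$, which is exact by hypothesis. For the inductive step, pick a short exact sequence $0 \to N' \to F \to N \to 0$ with $F$ free of finite rank and $\pd({}_AN') = d-1$. Since each $P^i$ is projective, $\Hom_A(P^i,-)$ is exact, so applying $\Hom_A(\cpx{P},-)$ termwise yields a short exact sequence of complexes. The associated long exact sequence in cohomology, together with exactness of $\Hom_A(\cpx{P},F)$ (the base case) and $\Hom_A(\cpx{P},N')$ (the induction hypothesis), forces exactness of $\Hom_A(\cpx{P},N)$.

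For (4), I would invoke the standard adjunction isomorphism of complexes $\D(M\otimes_A\cpx{P}) \cong \Hom_A(\cpx{P},\D M)$. Since $M$ has finite injective dimension as a right $A$-module, its $k$-dual $\D M$ has finite projective dimension as a left $A$-module, so part (3) applies and shows $\Hom_A(\cpx{P},\D M)$ is exact; as the $k$-duality $\D$ is exact and reflects exactness, $M\otimes_A\cpx{P}$ is itself exact. The only subtle point I foresee is the sidedness bookkeeping in (4)---one must confirm that finite injective dimension on the right really does pass to finite projective dimension on the left under $\D$---together with checking in (3) that $\Hom_A(\cpx{P},-)$ genuinely converts a short exact sequence of modules into a short exact sequence of complexes, which uses only the projectivity of each $P^i$ and not any additional hypothesis on $\cpx{P}$.
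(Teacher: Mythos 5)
Your proposal is correct. Parts (3) and (4) follow the paper's own route exactly: induction on $\pd({}_AN)$ using that each $\Hom_A(P^i,-)$ is exact (so a short exact sequence of coefficients yields a short exact sequence of complexes and a long exact cohomology sequence), and then reduction of (4) to (3) via $\D(M\otimes_A\cpx{P})\cong\Hom_A(\cpx{P},\D(M))$ together with the fact that $\D$ carries finite injective dimension on the right to finite projective dimension on the left. For (1) and (2) your mechanism differs slightly but equivalently from the paper's: you shift dimension along the cocycles $Z^n$ of $\cpx{P}$, identifying the homology of $M\otimes_A\cpx{P}$ (resp.\ $\Hom_A(\cpx{P},N)$) with $\Tor_1^A(M,Z^{m})$ (resp.\ $\Ext_A^1(Z^{m},N)$) and pushing the index up to $\Tor_{d+1}$ (resp.\ $\Ext^{d+1}$), whereas the paper inducts on $\pd(M_A)$ via a syzygy sequence $0\to\Omega(M)\to Q_M\to M\to 0$ and the resulting long exact sequence of complexes. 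Both are standard and buy the same thing; your version of (1)--(2) has the tiny cosmetic advantage of not needing to check that tensoring the syzygy sequence with each flat $P^i$ stays exact, at the cost of an indexing computation (your $\Tor_1(M,Z^{n+1})$ is really $\Tor_1(M,Z^{n+2})$ for exactness at position $n$, but since the claim is for all $n$ this is immaterial).
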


\begin{proof}
(1). We use induction of the projective dimension of $M_A$. If $\pd(M_A)=0$, then $M_A$ is projective. It follows that  $M\otimes_A\cpx{P}$ is exact. Now assume that $\pd(M_A)>0$. Let $0\lra \Omega(M)\lra Q_M\lra M\lra 0$
be an exact sequence with $Q_M$ projective. Then $\pd(\Omega(M))<\pd(M_A)$. By induction hypothesis, we can assume that $\Omega(M)\otimes_A\cpx{P}$ is exact.
The projectivity of $Q_M$ implies the exactness of  $Q_M\otimes_A\cpx{P}$. It follows that $M\otimes_A\cpx{P}$ has zero homology in all degrees by the long exact sequence of homology induced by the short exact sequence $0\ra \Omega(M)\otimes_A\cpx{P}\ra Q_M\otimes_A\cpx{P}\ra M\otimes_A\cpx{P}\ra 0$ of complexes.  This proves (1).

The proofs of (2) and (3) are similar.

 For the statement (4), we consider the complex $\D(M\otimes_A\cpx{P})$, which is isomorphic to $\Hom_A(\cpx{P}, \D(M_A))$. Since $M_A$ has finite injective dimension, the left $A$-module $\D(M_A)$ has finite projective dimension. It follows from (3) that $\D(M\otimes_A\cpx{P})$ is exact. Hence $M\otimes_A\cpx{P}$ is exact.
 \end{proof}

From the above lemma, we can easily prove the following assertions.

\begin{lem}\label{lemma-Gorenstein-Tor-Ext-property}
Let $A$ and $B$ be algebras, and let ${}_AG$ be a Gorenstein projective $A$-module. Suppose that ${}_AN$ is a left $A$-module, $L_A$ is a right $A$-module and ${}_BM_A$ is a $B$-$A$-bimodule.
\begin{itemize}
 \setlength\itemsep{0pt}
 \item[$(1)$.] If $\pd ({}_AN)<\infty$ or $\id({}_AN)<\infty$, then   $\Ext_A^i(G, N)=0$ for all $i\ge1$.

 \item[$(2)$.] If $\pd(L_A)<\infty$ or $\id(L_A)<\infty$, then $\Tor_i^A(L, G)=0$ for all $i\ge1$.

 \item[$(3)$.] Suppose that ${}_BM$ is projective, $\pd(M_A)<\infty$ and $\pd({}_A\Hom_B(M, B))<\infty$. Then ${}_BM\otimes_AG$ is a Gorenstein projective $B$-module.
\end{itemize}
\end{lem}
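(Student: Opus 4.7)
The plan is to fix throughout a complete projective resolution $\cpx{P}$ of $G$, so that $G=\Ker d^1 = \Ima d^0$ and $\Hom_A(\cpx{P}, {}_AA)$ is exact. Truncating on the left yields an honest projective resolution $\cdots\to P^{-1}\to P^0\to G\to 0$ of ${}_AG$, so the positive $\Ext_A^i(G,N)$ and $\Tor_i^A(L,G)$ are computed by the cohomology/homology of the left halves of $\Hom_A(\cpx{P}, N)$ and $L\otimes_A\cpx{P}$ respectively. For (1), depending on which of $\pd({}_AN)$ or $\id({}_AN)$ is finite, parts (3) or (2) of Lemma \ref{lemma-complete-resolution-basicProperty} force $\Hom_A(\cpx{P}, N)$ to be exact in every degree, and hence $\Ext_A^i(G, N)=0$ for all $i\ge 1$. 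Statement (2) is symmetric: parts (1) or (4) of Lemma \ref{lemma-complete-resolution-basicProperty} make $L\otimes_A\cpx{P}$ exact, so $\Tor_i^A(L, G)=0$ for all $i\ge 1$.

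For (3), the strategy is to show that $M\otimes_A\cpx{P}$ is a complete projective resolution of ${}_BM\otimes_AG$ over $B$. Each term $M\otimes_AP^i$ is projective over $B$ because $P^i\in\add({}_AA)$ and ${}_BM$ is projective, so $M\otimes_AP^i\in\add({}_BM)$. Exactness of $M\otimes_A\cpx{P}$ follows from Lemma \ref{lemma-complete-resolution-basicProperty}(1) applied to $M_A$ of finite projective dimension; to identify ${}_BM\otimes_AG$ as the cycle in position $1$, I will apply the already-proved part (2) to the Gorenstein projective module $G^1:=\Ker d^2$ (a cocycle of a complete projective resolution) to deduce $\Tor_1^A(M, G^1)=0$, which makes $0\to M\otimes_AG\to M\otimes_AP^1\to M\otimes_AG^1\to 0$ exact and pins down $M\otimes_AG$ as $\Ker(M\otimes d^1)=\Ima(M\otimes d^0)$. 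For the remaining ``$\Hom_B(-, {}_BB)$-exactness'' condition, I invoke Lemma \ref{lemma-basic-homological-isomorphism}(3) termwise---justified because each $P^j$ is projective over $A$ (killing Tor) and ${}_BM$ is projective over $B$ (killing Ext)---to obtain a natural isomorphism of complexes
$$\Hom_B(M\otimes_A\cpx{P},\, B)\;\cong\;\Hom_A\bigl(\cpx{P},\, \Hom_B(M, B)\bigr).$$
Since $\pd({}_A\Hom_B(M, B))<\infty$ by hypothesis, Lemma \ref{lemma-complete-resolution-basicProperty}(3) makes the right-hand complex exact, and we are done.

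The only step that needs genuine care is this last one: verifying that the termwise adjunction isomorphisms from Lemma \ref{lemma-basic-homological-isomorphism}(3) assemble into an isomorphism of complexes, i.e.\ that they are natural in the differentials of $\cpx{P}$. This is a standard consequence of the $\otimes$--$\Hom$ adjunction when the arguments are projective, but it deserves to be spelled out. Everything else in the proof is routine bookkeeping with short exact sequences $0\to G^i\to P^{i+1}\to G^{i+1}\to 0$, long exact sequences of $\Ext$ and $\Tor$, and the vanishing results (1) and (2) just established.
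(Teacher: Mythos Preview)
Your proposal is correct and follows essentially the same approach as the paper: deduce (1) and (2) directly from the four parts of Lemma \ref{lemma-complete-resolution-basicProperty}, and for (3) verify that $M\otimes_A\cpx{P}$ is a complete projective resolution of ${}_BM\otimes_AG$ using Lemma \ref{lemma-complete-resolution-basicProperty}(1) for exactness, projectivity of ${}_BM$ for the terms, and the adjunction $\Hom_B(M\otimes_A\cpx{P},B)\cong\Hom_A(\cpx{P},\Hom_B(M,B))$ together with Lemma \ref{lemma-complete-resolution-basicProperty}(3) for the $\Hom_B(-,B)$-exactness. Your extra care in pinning down $M\otimes_AG$ as the degree-$1$ cycle and in checking naturality of the adjunction is more explicit than the paper (which simply invokes ``adjointness''), but the argument is the same; note also that the termwise isomorphism $\Hom_B(M\otimes_AP^j,B)\cong\Hom_A(P^j,\Hom_B(M,B))$ is just the ordinary tensor--Hom adjunction and needs no vanishing hypotheses, so citing Lemma \ref{lemma-basic-homological-isomorphism}(3) there is harmless but unnecessary.
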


\begin{proof}
The statements $(1)$ and $(2)$ follow from Lemma \ref{lemma-complete-resolution-basicProperty}.

 $(3)$.  Let $\cpx{P}$ be a complete projective resolution of $G$. Then by Lemma \ref{lemma-complete-resolution-basicProperty} (1), the complex ${}_BM\otimes_A\cpx{P}$ is again exact since $\pd(M_A)<\infty$.  Since ${}_BM$ is projective, all terms of $M\otimes_A\cpx{P}$ are projective as left $B$-modules. Finally, the adjointness gives rise to a natural isomorphism
 $$\Hom_B({}_BM\otimes_A\cpx{P}, B)\cong \Hom_A(\cpx{P}, \Hom_B(M, B)).$$
 Since ${}_A\Hom_B(M, B)$ has finite projective dimension, the complex $\Hom_A(\cpx{P}, \Hom_B(M, B))$ is exact by Lemma \ref{lemma-complete-resolution-basicProperty} (3). Hence $\Hom_B({}_BM\otimes_A\cpx{P}, B)$ is exact, and therefore ${}_BM\otimes_A\cpx{P}$ is a complete projective resolution of ${}_BM\otimes_AG$. This implies that ${}_BM\otimes_AG$ is a Gorenstein projective $B$-module.
\end{proof}

 Lemma \ref{lemma-complete-resolution-basicProperty} also gives rise to the following result  concerning Gorenstein projective modules over tensor products of algebras. Recall that a finite dimensional algebra $\Gamma$ is Gorenstein provided that $\id({}_{\Gamma}\Gamma)<\infty$ and $\id(\Gamma_{\Gamma})<\infty$. In this case, it is well-known that $\id({}_{\Gamma}\Gamma)=\id(\Gamma_{\Gamma})$.

\begin{prop}\label{Prop-gorenstein-projective-bimodules}
Let $A$ and $B$ be algebra, and let $\Lambda:=A\otimes B$. Then we have the following.
\begin{itemize}
   \setlength\itemsep{0mm}
 \item[$(1)$.] Suppose that $X\in A\Gp$ and $Y\in B\Gp$. Then $X\otimes Y\in\Lambda\Gp$.

 \item[$(2)$.] Suppose that $G$ is a Gorenstein projective $\Lambda$-module. If $A$ or $B$ is a Gorenstein algebra, then both ${}_AG$ and ${}_BG$ are Gorenstein projective.
\end{itemize}
\end{prop}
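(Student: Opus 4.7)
The plan is to prove (1) by constructing a complete projective $\Lambda$-resolution of $X\otimes Y$ out of complete projective resolutions $\cpx{P}$ of $X$ over $A$ and $\cpx{Q}$ of $Y$ over $B$. By Lemma~\ref{lemma-tensorModuleHom}(1), each $P^{i}\otimes Q^{j}$ is $\Lambda$-projective and $\Hom_{\Lambda}(P^{i}\otimes Q^{j},\Lambda)\cong\Hom_{A}(P^{i},A)\otimes\Hom_{B}(Q^{j},B)$. I would consider the bicomplex $\cpx{P}\otimes\cpx{Q}$ (bi-infinite), and sidestep the attendant infinite-sum problem by totalizing the ``first-quadrant'' subbicomplex $(P^{i}\otimes Q^{j})_{i,j\geq 0}$ to produce an exact proper coresolution $0\to X\otimes Y\to T^{0}\to T^{1}\to\cdots$, symmetrically totalizing the tensor of the projective resolutions of $X$ and $Y$ to obtain $\cdots\to S^{-2}\to S^{-1}\to X\otimes Y\to 0$, and splicing them at $X\otimes Y$. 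Exactness of each totalization follows from a standard spectral-sequence computation, using that tensoring exact complexes over the field $k$ preserves exactness. Every term is then a finite direct sum of $P^{i}\otimes Q^{j}$ and hence $\Lambda$-projective; and $\Hom_{\Lambda}(-,\Lambda)$ applied to the spliced complex once more assembles, via Lemma~\ref{lemma-tensorModuleHom}(1), from $k$-tensor products of the exact complexes $\Hom_{A}(\cpx{P},A)$ and $\Hom_{B}(\cpx{Q},B)$, and is therefore exact.

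For part (2), let $\cpx{R}$ be a complete projective $\Lambda$-resolution of $G$. Since $\Lambda=A\otimes B$ is free both as a left $A$-module and as a left $B$-module, restriction of scalars sends $\Lambda$-projectives to $A$-projectives and to $B$-projectives, and the kernel $G=\Ker d^{1}_{\cpx{R}}$ is preserved. Assume first that $A$ is Gorenstein. Then $\id({}_{A}A)<\infty$, so Lemma~\ref{lemma-complete-resolution-basicProperty}(2) (with $N={}_{A}A$) yields exactness of $\Hom_{A}(\cpx{R},A)$, and hence ${}_{A}G\in A\Gp$. For ${}_{B}G$ I would invoke Lemma~\ref{lemma-Gorenstein-Tor-Ext-property}(3) with the $B$-$\Lambda$-bimodule $M=\Lambda$ (left $B$-action from $B\hookrightarrow\Lambda$, right $\Lambda$-action by multiplication). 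The conditions that ${}_{B}\Lambda$ be projective and that $\pd(\Lambda_{\Lambda})=0$ are immediate; the one piece of real work is $\pd({}_{\Lambda}\Hom_{B}(\Lambda,B))<\infty$. A direct bimodule calculation (via tensor-Hom adjunction and the freeness of ${}_{B}\Lambda$) identifies $\Hom_{B}({}_{B}\Lambda,{}_{B}B)\cong\D A\otimes B$ as a left $\Lambda$-module, where $\D A$ carries its standard left $A$-module structure. Tensoring projective resolutions over the field $k$ yields the estimate $\pd_{\Lambda}(U\otimes V)\leq\pd_{A}(U)+\pd_{B}(V)$ for left modules $U,V$, so $\pd_{\Lambda}(\D A\otimes B)\leq\pd_{A}(\D A)=\id(A_{A})<\infty$ by the Gorenstein hypothesis on $A$. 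Lemma~\ref{lemma-Gorenstein-Tor-Ext-property}(3) then gives ${}_{B}G\cong{}_{B}\Lambda\otimes_{\Lambda}G\in B\Gp$. The case in which $B$ is Gorenstein is symmetric.

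The main obstacle in (1) is the bi-infinite nature of $\cpx{P}\otimes\cpx{Q}$; the splicing at $X\otimes Y$ is exactly the device that keeps every total-degree component a finite sum, which is needed both to have a well-defined complex of modules and to verify that the Hom-dual inherits exactness term by term. In (2) the delicate point is the left-$\Lambda$-module identification $\Hom_{B}(\Lambda,B)\cong\D A\otimes B$, after which the finite-projective-dimension bound on $\D A$ (furnished by the Gorenstein hypothesis on $A$) is precisely what Lemma~\ref{lemma-Gorenstein-Tor-Ext-property}(3) requires in order to transfer Gorenstein projectivity onto the other factor $B$ as well.
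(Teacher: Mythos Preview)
Your proposal is correct and follows essentially the same strategy as the paper. For (1), both arguments split the complete resolutions into their resolution and coresolution halves, tensor the matching halves over $k$, and splice at $X\otimes Y$; the paper phrases this via truncations $\tau_{\leq 0}\cpx{P}$, $\tau_{\geq 1}\cpx{P}$ rather than bicomplex/totalization language, but the content is identical. For (2), the paper makes the same identification $\Hom_B({}_B\Lambda,B)\cong\D(A_A)\otimes B$ and uses $\pd_A(\D A)<\infty$, but works directly with the complex $\cpx{R}$ via the adjunction $\Hom_B({}_B\cpx{R},B)\cong\Hom_\Lambda(\cpx{R},\D(A_A)\otimes B)$ and Lemma~\ref{lemma-complete-resolution-basicProperty}(3), rather than quoting Lemma~\ref{lemma-Gorenstein-Tor-Ext-property}(3); your packaging via that lemma is a clean shortcut, and your direct application of Lemma~\ref{lemma-complete-resolution-basicProperty}(2) for ${}_AG$ is slightly simpler than the paper's detour through $\Hom_A({}_A\cpx{R},A)\cong\Hom_\Lambda(\cpx{R},A\otimes\D(B_B))$.
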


\begin{proof}
$(1)$. Let $P^\bullet $ and $Q^\bullet$ be complete resolutions of $X$ and $Y$ respectively. Denote
$$\tau_{\leq 0}P^\bullet=\big(\cdots \to P^{-1}\to P^0\big) \quad \mbox{and}
\quad \tau_{\geq 1}P^\bullet=\big( P^1\to P^2\to \cdots\big).$$
Similarly, we define $\tau_{\leq 0}Q^\bullet$ and $\tau_{\geq 1}Q^\bullet$.
Thus, the complex $(\tau_{\leq 0}\cpx{P})\otimes  (\tau_{\leq 0}\cpx{Q})$ is a projective resolution of $X\otimes  Y$ and
the complex $(\tau_{\geq 1}P^\bullet)\otimes  (\tau_{\geq 1}Q^\bullet)$  is a projective coresolution of $X\otimes  Y$. Combining these two complexes, we get an exact complex, denoted by $R^\bullet$,  whose first  cocycle module is $X\otimes  Y$.

We need to show that $\Hom_{\Lambda}(R^\bullet, A\otimes  B)$ is exact.  Note that $\Hom_{\Lambda}(R^\bullet, A\otimes  B)$ can be obtained by combining
$$\Hom_{\Lambda}((\tau_{\leq 0}P^\bullet)\otimes  (\tau_{\leq 0}Q^\bullet), A\otimes  B)\cong \Hom_{A}(\tau_{\leq 0}P^\bullet, A)\otimes  \Hom_{B}(\tau_{\leq 0}Q^\bullet, B) $$ and
$$\Hom_{\Lambda}((\tau_{\geq 1}P^\bullet)\otimes  (\tau_{\geq 1}Q^\bullet), A\otimes  B)\cong \Hom_{A}(\tau_{\geq 1}P^\bullet, A)\otimes  \Hom_{B}(\tau_{\geq  1}Q^\bullet, B). $$
The complex $\Hom_{A}(\tau_{\leq 0}P^\bullet, A)\otimes  \Hom_{B}(\tau_{\leq 0}Q^\bullet, B) $ is a coresolution of $\Hom_A(X, A)\otimes  \Hom_B(Y,  B)$, and the complex $\Hom_{A}(\tau_{\geq 1}P^\bullet, A)\otimes  \Hom_{B}(\tau_{\geq  1}Q^\bullet, B)$ is a resolution of $\Hom_A(X, A)\otimes  \Hom_B(Y,  B)$. Hence, $\Hom_{\Lambda}(R^\bullet, A\otimes  B)$ is exact.

\smallskip

$(2)$.  Without loss of generality, we assume that $A$ is a Gorenstein algebra. Then $\id ({}_AA)=\id(A_A)<\infty$. Equivalently $\pd\, \D(A_A)=\pd\, \D({}_AA)<\infty$.  Let $\cpx{P}$ be a complete projective resolution of $G$ over $\Lambda$. Then ${}_A\cpx{P}$ is an exact sequence of projective $A$-modules and ${}_AG$ is the cokernel of the differential $P^{-1}\ra P^0$. To show that ${}_AG$ is Gorenstein, it suffices to prove that ${}_A\cpx{P}$ is $\Hom_A(-, A)$-exact. Note that
$$\Hom_A({}_A\cpx{P}, A)\cong\Hom_A(\Lambda\otimes_{\Lambda}\cpx{P}, A)\cong\Hom_{\Lambda}(\cpx{P}, \Hom_A(\Lambda, A))\cong\Hom_{\Lambda}(\cpx{P}, A\otimes \D(B_B)).$$
Since $\id({}_AA)<\infty$, the left $\Lambda$-modules $A\otimes \D(B_B)$ has finite injective dimension. By Lemma \ref{lemma-complete-resolution-basicProperty} (2), the complex $\Hom_{\Lambda}(\cpx{P}, A\otimes \D(B_B))$ is exact. Hence $\Hom_A({}_A\cpx{P}, A)$ is exact, and thus ${}_A\cpx{P}$ is a complete projective resolution of ${}_AG$. This implies that ${}_AG$ is Gorenstein projective.  Similarly, ${}_B\cpx{P}$ is an exact sequence of projective $B$-modules, and there is an isomorphism of complexes $\Hom_{B}({}_B\cpx{P}, B)\cong\Hom_{\Lambda}(\cpx{P}, \D(A_A)\otimes B)$ by adjointness. Since $\pd\D(A_A)$ has finite projective dimension, so does the left $\Lambda$-module $\D(A_A)\otimes B$. By Lemma \ref{lemma-complete-resolution-basicProperty} (3), we deduce that $\Hom_{\Lambda}(\cpx{P}, \D(A_A)\otimes B)$ is exact. It follows that  ${}_B\cpx{P}$ is $\Hom_B(-, B)$-exact and is a complete projective resolution of ${}_BG$. Hence ${}_BG$ is Gorenstein projective.
\end{proof}

\begin{rem}
In a recent preprint {\rm \cite{Shen2016}},  Dawei Shen gives a characterization of Gorenstein projective bimodules when $A$ or $B$ is Gorenstein.  Proposition~\ref{Prop-gorenstein-projective-bimodules}   (2) is  a consequence of his result.
\end{rem}


\section {Monomorphism categories}

In this section, starting from Luo and Zhang's definition \cite{Luo2013} of monic representations of an acyclic quiver over algebra, we introduce monic representations of an arbitrary finite dimensional algebra over algebra (Definition \ref{Def: homological monic representations}), and study the category of all such monic representations and its full subcategories.

\subsection{Definitions}

We first recall the definition of monic representations of a quiver $Q$ over an algebra $A$ introduced in \cite{Luo2013}.  Let $Q=(Q_0, Q_1)$ be a quiver,  that is, a directed graph for which  $Q_0$ is the set  of vertices and $Q_1$ is the set of arrows between vertices. We shall always assume that $Q$ is a finite quiver, i.e. $Q_0$ and $Q_1$ are finite sets. The starting vertex of a path $p$ is $s(p)$, and the ending vertex of $p$ is $t(p)$. The trivial path corresponding to a vertex $i\in Q_0$ is denoted by $e_i$. Let $A$ be a finite dimensional $k$-algebra.   A {\em representation $X$ of $Q$ over $A$} is a datum
$$X=(X_i, X_{\alpha}, i\in Q_0, \alpha\in Q_1),$$
where $X_i\in \modcat{A}$ for all $i\in Q_0$, and $X_{\alpha}$ is an $A$-module homomorphism from $X_{s(\alpha)}$ to $X_{t(\alpha)}$ for all $\alpha\in Q_1$.  A morphism $f: X\longrightarrow Y$ between two representations $X$ and $Y$ of $Q$ over $A$ is a collection of $A$-module homomorphisms $\{f_i: X_i\rightarrow Y_i, i\in Q_0\}$ such that  $f_{t(\alpha)}X_{\alpha}=Y_{\alpha}f_{s(\alpha)}$ for all $\alpha\in Q_1$. A representation $X$ is called a {\em monic representation} of $Q$ over $A$ if, for each $i\in Q_0$, the map
$$(X_{\alpha})_{\begin{smallmatrix}\alpha\in Q_1\\t(\alpha)=i\end{smallmatrix}}: \bigoplus_{\begin{smallmatrix}{\alpha\in Q_1}\\{t(\alpha)=i}\end{smallmatrix}}X_{s(\alpha)}\longrightarrow X_i$$
is a monomorphism.  We denote by $\Rep(Q, A)$ the category of all representations of $Q$ over $A$, and denote by $\Mon(Q, A)$ its full subcategory consisting of all monic representations.

\smallskip
Let $\Lambda:=A\otimes kQ$. Then   $\Rep(Q, A)$ is actually equivalent to $\modcat{\Lambda}$, see \cite{Les94}; for more details, see \cite{Luo2013}.  In fact, for each representation $X$ of $Q$ over $A$, one can view the $A$-module $\bigoplus\limits_{i\in Q_0}X_i$ as a $\Lambda$-module.  The action of $\Lambda$  is as follows:  $$(a\otimes \alpha)\cdot (x_i)_{i\in Q_0}=a\cdot X_{\alpha}(x_{s(\alpha)})\quad\mbox{and}\quad (a\otimes e_j)\cdot (x_i)_{i\in Q_0}=a\cdot x_j$$
for all $a\in A$, $\alpha\in Q_1$, $j\in Q_0$ and $(x_i)_{i\in Q_0}\in \bigoplus\limits_{i\in Q_0}X_i$.

In the following  we shall identify a representation $X$  of $Q$ over $A$ with its corresponding  $\m$-module $\bigoplus\limits_{i\in Q_0}X_i$. Under this identification, for $i\in Q_0$, we have
$(1\otimes  e_i)X=X_i$.

For each $i\in Q_0$,  let $S_i$ be the corresponding simple left $kQ$-module.  Then the projective resolution of simple right $kQ$-module  $D(S_i)$ is as follows
$$0\lra\bigoplus_{\begin{smallmatrix}\alpha\in Q_1\\t(\alpha)=i\end{smallmatrix}}e_{s(\alpha)}kQ\lraf{(\alpha\cdot)} e_ikQ\lra D(S_i)\lra 0$$
The morphism $(\alpha\cdot)$ is given by multiplying $\alpha$ on the left side. Applying the exact functor $A\otimes -$, we get an exact sequence of  $A$-$\m$-bimodules
$$0\lra\bigoplus_{\begin{smallmatrix}\alpha\in Q_1\\t(\alpha)=i\end{smallmatrix}}A\otimes e_{s(\alpha)}kQ\lraf{((1\otimes \alpha)\cdot)} A\otimes e_ikQ\lra A\otimes D(S_i)\lra 0,$$
which is a projective resolution of the right $\m$-module $A\otimes D(S_i)$ and can be written as
$$0\lra\bigoplus_{\begin{smallmatrix}\alpha\in Q_1\\t(\alpha)=i\end{smallmatrix}}(1\otimes e_{s(\alpha)})\m\lraf{((1\otimes \alpha)\cdot)} (1\otimes e_i)\m\lra A\otimes D(S_i)\lra 0.$$
Given a representation $X$ of $Q$ over $A$, we apply $-\otimes_{\m}X$ to the above sequence, and get a sequence of left $A$-modules, $$0\lra\bigoplus_{\begin{smallmatrix}\alpha\in Q_1\\t(\alpha)=i\end{smallmatrix}}X_{s(\alpha)}\lraf{(X_{\alpha})} X_i\lra (A\otimes D(S_i))\otimes_{\m}X\lra 0.$$
Clearly, $X$ is a monic representation if and only if the above sequence is exact for all $i\in Q_0$ , if and only if $$\Tor_m^{\m}(A\otimes D(S_i), X)=0$$
 for all $i\in Q_0$ and $m\ge1$. This shows that the property $X$ is a monic representation can be characterized homologically by the above vanishing property on $\Tor$-groups.   For this reason, we define {\em monic representations} of an arbitrary finite dimensional algebra $B$ over another algebra $A$ as follows.

\begin{defn}\label{Def: homological monic representations}
Let $A$ and $B$ be finite dimensional $k$-algebras, and let $\m:=A\otimes B$. A left $\m$-module $X$ is called a {\bf monic representation} of $B$ over $A$ if $$\Tor_i^{\m}(A\otimes \D(S), X)=0$$
for all $i\ge1$ and all simple left $B$-modules $S$. We denote by $\Mon(B, A)$ the full subcategory of $\modcat{\Lambda}$ consisting of all  monic representations of $B$ over $A$, called the {\bf monomorphism category} of $B$ over $A$.
\end{defn}

For each simple left $B$-module $S$, let $\cdots\ra P^{-m}\ra\cdots\ra P^0\ra \D(S)\ra 0$ be a projective resolution of the simple right $B$-module $\D(S)$. Then $\cdots\ra A\otimes P^{-m}\ra \cdots\ra A\otimes P^0\ra A\otimes \D(S)\ra 0$ is a projective resolution of the right $\Lambda$-module $A\otimes \D(S)$.  By definition,  a left $\Lambda$-module $X$ belongs to $\Mon(B, A)$ if and only if the sequence of left $A$-modules
$$\cdots\lra (A\otimes P^{-m})\otimes_{\Lambda}X\lra\cdots\lra (A\otimes P^0)\otimes_{\Lambda}X\lra (A\otimes \D(S))\otimes_{\Lambda}X\lra  0$$
is exact for all  simple left $B$-modules $S$.

\medskip

Keep the notations above.  The following lemma characterizes modules in $\Mon(B, A)$.

\begin{lem} \label{lemma-monicRep-equiv-condition}
Let $X$ be a left $\m$-module. The following are equivalent:
\begin{itemize}
   \setlength\itemsep{0mm}
 \item[$(1)$.] $X\in\Mon(B, A)$;

 \item[$(2)$.] ${\rm Tor}_i^{\m}(A\otimes  V, X)=0,$ for all $i\geq 1$ and for all $V\in $ {\rm mod-}$B$;

 \item[$(3)$.] ${\rm Ext}^i_{\m}(X, \D(A)\otimes  S)=0$, for all $i\geq 1$ and for all simple left $B$-modules $S$;

 \item[$(4)$.] ${\rm Ext}^i_{\m}(X, \D(A)\otimes  Y)=0$, for all $i\geq 1$ and for all $Y\in \modcat{B}$.

 \item[$(5)$.] ${}_BX$ is a projective $B$-module
\end{itemize}

\noindent If $\gldim(B)<\infty$, then the above statements are further equivalent to the following two statements:
\begin{itemize}
 \setlength\itemsep{0mm}
 \item[$(6)$.] $\Tor_i^{\m}(A\otimes  \D(B), X)=0$ for all $i\geq 1$.

 \item[$(7)$.] $\Ext^i_{\m}(X, \D(A)\otimes  B)=0$ for all $i\geq 1$.
\end{itemize}
\end{lem}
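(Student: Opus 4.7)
The strategy is to show that all of (1)--(5) are equivalent to the single structural condition (5), namely that ${}_BX$ is projective, and then to treat (6), (7) separately using the global-dimension hypothesis.

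The first step, and really the heart of the matter, is to establish the change-of-rings identifications
\[
\Tor_i^{\Lambda}(A\otimes V,\,X)\;\cong\;\Tor_i^B(V,\,{}_BX),\qquad
\Ext_{\Lambda}^i(X,\,\D(A)\otimes Y)\;\cong\;\Ext_B^i({}_BX,\,Y),
\]
valid for every right $B$-module $V$, every left $B$-module $Y$, and every $i\ge 0$. For the Tor identification I would pick a projective resolution $Q^{\bullet}\to V$ over $B$; then $A\otimes Q^{\bullet}\to A\otimes V$ is a projective resolution over $\Lambda$ by Lemma~\ref{Lem-modules-over-tensor-product}(1), and Lemma~\ref{lemma-tensorModuleHom}(4) gives a term-by-term isomorphism $(A\otimes Q^i)\otimes_{\Lambda}X\cong Q^i\otimes_B X$. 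The Ext identification then follows by combining the Tor version with the duality Lemma~\ref{lemma-basic-homological-isomorphism}(1) and the tensor-product duality Lemma~\ref{lemma-tensorModuleHom}(2).

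Granted these two identifications, conditions (1)--(4) translate respectively to $\Tor_i^B(\D(S),{}_BX)=0$ for simple left $S$; $\Tor_i^B(V,{}_BX)=0$ for every right $B$-module $V$; $\Ext_B^i({}_BX,S)=0$ for simple left $S$; and $\Ext_B^i({}_BX,Y)=0$ for every left $B$-module $Y$, in each case for $i\ge 1$. Each of these is equivalent to (5) by routine homological algebra: (5) trivially implies all four statements; conversely, dévissage along a finite composition series reduces vanishing against arbitrary finitely generated modules to vanishing against simples, and any of the resulting four vanishing conditions forces ${}_BX$ to be projective, either via the characterisation of projectivity by Ext-vanishing on simples or via the classical fact that a finitely generated flat module over a finite dimensional algebra is projective.

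For the implications involving (6) and (7), the directions (2)$\Rightarrow$(6) and (4)$\Rightarrow$(7) are immediate. For the converses under $\gldim(B)<\infty$, I would observe that every indecomposable injective right $B$-module is a summand of $\D(B)$, and dually every indecomposable projective left $B$-module is a summand of $B$; so (6) already yields Tor-vanishing against every finitely generated injective right $B$-module, while (7) yields Ext-vanishing against $\D(A)\otimes P$ for every finitely generated projective left $P$. Since $\gldim(B)<\infty$ forces every finitely generated $B$-module to admit a finite injective coresolution (respectively finite projective resolution), a standard dimension-shifting argument along these finite resolutions upgrades (6) to (2) and (7) to (4). The only non-formal ingredient in the entire proof is the change-of-rings isomorphism of the first step; I expect this to be the main obstacle, though in fact it reduces cleanly to applying Lemma~\ref{lemma-tensorModuleHom}(4) levelwise, after which all the remaining implications are pure bookkeeping.
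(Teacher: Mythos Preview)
Your proposal is correct and follows essentially the same approach as the paper: both arguments hinge on the change-of-rings identification $\Ext_{\Lambda}^i(X,\D(A)\otimes Y)\cong\Ext_B^i({}_BX,Y)$ together with Tor--Ext duality and d\'evissage along composition series, and both handle (6), (7) by dimension-shifting along finite (co)resolutions. The only organizational difference is that you establish the Tor change-of-rings $\Tor_i^{\Lambda}(A\otimes V,X)\cong\Tor_i^B(V,{}_BX)$ explicitly up front (via Lemma~\ref{lemma-tensorModuleHom}(4)) and reduce all of (1)--(4) to $B$-level vanishing conditions simultaneously, whereas the paper treats the equivalences more piecemeal---using d\'evissage directly for (1)$\Leftrightarrow$(2) and (3)$\Leftrightarrow$(4), duality for (1)$\Leftrightarrow$(3), and Lemma~\ref{lemma-basic-homological-isomorphism}(3) for (4)$\Leftrightarrow$(5)---but the ingredients are the same.
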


\begin{proof}
Note that $X$ is a monic representation of $B$ over $A$ if and only if $\Tor^{\m}_i(A\otimes  \D(S), X)=0$ for all $i\geq 1$ and for all simple left $B$-modules $S$. Since any right $B$-module in $\mbox{\rm mod-}B$ has finite length,  the statement (1) and (2) are equivalent. Similarly (3) and (4) are equivalent.

The fact (1) and (3) are equivalent follows directly from the well-known isomorphism $\D\Tor_i^{\m}(A\otimes D(S), X)\cong \Ext_{\m}^i(X, \D(A)\otimes S)$, which in turn is a consequence of Lemma~\ref{lemma-basic-homological-isomorphism} (1) and Lemma~\ref{lemma-tensorModuleHom} (2). By Lemma \ref{lemma-basic-homological-isomorphism} (3), there is a  natural isomorphism
$$\Ext_B^i({}_BX, {}_BY)\cong\Ext_B^i({}_B\Lambda\otimes_{\Lambda}X, {}_BY)\cong\Ext_{\Lambda}^i(X, \Hom_B(A\otimes B, Y))\cong\Ext_{\Lambda}^i(X, \D(A)\otimes Y)$$
for all $i\geq 0$. Note that ${}_BX$ is projective if and only if $\Ext_B^i({}_BX, {}_BY)=0$ for all $Y\in\modcat{B}$ and all $i>0$. The above isomorphism indicates that $(4)$ and $(5)$ are equivalent.  Thus (1) -- (5) are all equivalent.

Suppose that $\gldim(B)<\infty$.  Clearly $(2)$ implies $(6)$.   Assume now that $(6)$ holds. By assumption, every right $B$-module $V$ has finite injective resolution $0\ra V\ra I_0\ra\cdots\ra I_m\ra 0$. Applying $A\otimes -$ to this sequence results in an exact sequence $0\ra A\otimes V\ra A\otimes I_0\ra\cdots\ra A\otimes I_m\ra 0$. Since $\Tor_i^{\Lambda}(A\otimes I_s, X)=0$ for all $i\geq 1$ and $0\leq s\leq m$ by (6), we deduce that $\Tor_i^{\Lambda}(A\otimes V, X)=0$ for all $i\geq 1$. This proves that $(6)\Rightarrow (2)$.  That
$(6)$ is equivalent to $(7)$ again follows from the natural isomorphism $\D\Tor_i^{\Lambda}(A\otimes \D(B), X)\cong \Ext_{\Lambda}^i(X, \D(A)\otimes B)$.
\end{proof}

\begin{exm}
\label{example-not-symmetric}
For two algebras $A$ and $B$, the definition of $\Mon(B, A)$ is not symmetric, that is, the categories $\Mon(B, A)$ and $\Mon(A, B)$ do not coincide in general. For instance,  let $A=k[x]/(x^2)$ and $B$ be the path algebra of the quiver $1\lraf{\alpha}2$ over $k$. Then $\Lambda:=A\otimes B$ is given by the following quiver
$$\xymatrix{\bullet\ar[r]^{\alpha}_(0){1}_(1){2}\ar@(dl,ul)^{\beta} & \bullet\ar@(dr,ur)_{\gamma}}$$
with relations $\beta^2, \gamma^2, \alpha\beta-\gamma\alpha$.
Note that there are 9 indecomposable modules in $\m$-mod up to isomorphism. Let $S$ be the unique simple $A$-module, and denote by $\lambda: S\ra A$ the inclusion map and by $\pi: A\ra S$ the canonical surjective map. By calculation, one can see that $\Mon(B, A)$ has 5 indecomposable objects (written as representation of $B$ over $A$):
$$0\lra S, \quad 0\lra A, \quad S\lraf{1} S, \quad  S\lraf{\lambda} A, \quad A\lraf{1} A$$
while $\Mon(A, B)$ has 4 indecomposable objects: $0\lra A, \quad A\lra 0, \quad A\lraf{1}A,\quad  A\lraf{\pi\lambda}A$.
\end{exm}

Recall that, for an algebra $\Gamma$, a full subcategory of $\modcat{\Gamma}$ is called a resolving subcategory provided that it contains all projective $\Gamma$-modules and is closed under taking extensions, direct summands, and kernels of epimorphisms.  Typical resolving subcategory includes  ${}^{\perp}\mathscr{X}$ which is, for each  full subcategory $\mathscr{X}$ of $\modcat{\Gamma}$, defined to be
 $${}^{\perp}\mathscr{X}:=\{Y\in\modcat{\Gamma} \ |\ \Ext_{\Gamma}^i(Y, \mathscr{X})=0\mbox{ for all }i\geq 1\}.$$
In case that $\mathscr{X}=\{X\}$ consists of only one $\Gamma$-module, we write ${}^{\perp}X$ for ${}^{\perp}\mathscr{X}$.

\medskip
There is an immediate consequence of Lemma \ref{lemma-monicRep-equiv-condition}.

\begin{cor}\label{corollary-Mon(BA)-resolving}
$\Mon(B, A)$ is a resolving subcategory of $\modcat{\Lambda}$. Moreover, $\Mon(B, A)$ is a Krull-Schmidt exact category.
\end{cor}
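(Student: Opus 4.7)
The plan is to reduce everything to the equivalent characterizations provided by Lemma~\ref{lemma-monicRep-equiv-condition}. The two most useful formulations for this purpose are (5), ``${}_BX$ is projective'', and (4), the $\Ext$-vanishing condition $\Ext_\Lambda^i(X, \D(A)\otimes Y)=0$ for all $i\geq 1$ and all $Y\in\modcat{B}$. Both forms make the four resolving axioms essentially routine.

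First I would verify the defining axioms of a resolving subcategory one by one. Containment of every projective $\Lambda$-module is immediate from (5): since ${}_B\Lambda={}_B(A\otimes B)$ is a (free, hence) projective $B$-module, restriction along $B\hookrightarrow\Lambda$ sends projective $\Lambda$-modules to projective $B$-modules. Closure under direct summands follows at once from either (4) or (5). Closure under extensions: given a short exact sequence $0\to X_1\to X_2\to X_3\to 0$ in $\modcat{\Lambda}$ with $X_1,X_3\in\Mon(B,A)$, the long exact $\Ext$-sequence against $\D(A)\otimes Y$ yields (4) for $X_2$. The only slightly delicate case is closure under kernels of epimorphisms: with $X_2,X_3\in\Mon(B,A)$ I restrict the sequence to $B$ to get $0\to{}_BX_1\to{}_BX_2\to{}_BX_3\to 0$; since ${}_BX_3$ is projective the sequence splits over $B$, so ${}_BX_1$ is a direct summand of the projective $B$-module ${}_BX_2$ and hence itself projective, giving $X_1\in\Mon(B,A)$ via (5).

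For the ``moreover'' part, I would invoke the standard fact that any full subcategory of an abelian category that is closed under extensions inherits a canonical exact structure in Quillen's sense, whose conflations are precisely the short exact sequences in $\modcat{\Lambda}$ with all three terms in $\Mon(B,A)$; closure under extensions has already been established above. The Krull--Schmidt property is inherited from the ambient category $\modcat{\Lambda}$, which is Krull--Schmidt because $\Lambda$ is a finite-dimensional $k$-algebra, once one knows that $\Mon(B,A)$ is closed under direct summands.

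The main obstacle, if there is one, is simply choosing the right equivalent condition. Without Lemma~\ref{lemma-monicRep-equiv-condition} a direct proof of closure under kernels of epimorphisms would require an explicit long exact sequence of $\Tor$ against the projective resolutions of each $A\otimes \D(S)$; but once the projectivity characterization (5) is available, that step collapses to a one-line splitting argument, and the whole corollary becomes a direct consequence of the lemma.
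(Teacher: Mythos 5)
Your proof is correct and follows essentially the same route as the paper: the paper simply notes that by Lemma~\ref{lemma-monicRep-equiv-condition}~(4) one has $\Mon(B,A)={}^{\perp}\mathscr{X}$ for $\mathscr{X}=\{\D(A)\otimes V\mid V\in\modcat{B}\}$ and cites the standard fact that such a left-perpendicular category is resolving, which is exactly the content you verify axiom by axiom (your use of condition (5) and the splitting over $B$ for kernels of epimorphisms is a harmless variant of the usual dimension-shifting argument). The remarks on the inherited exact structure and the Krull--Schmidt property are likewise standard and correct.
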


\begin{proof}
Let $\mathscr{X}:=\{\D(A)\otimes V| V\in\modcat{B}\}$. Then,  by Lemma \ref{lemma-monicRep-equiv-condition} (4), the monomorphism category $\Mon(B, A)$ coincides with  ${}^{\perp}\mathscr{X}$, which is a resolving subcategory of $\modcat{\Lambda}$.
\end{proof}

The following lemma will be very useful in later proofs.

\begin{lem}\label{lemma-monic-Ext-change-ring}
Let $A$ and $B$ be two finite dimensional $k$-algebras. Suppose that ${}_{\Lambda}X\in \Mon(B, A)$. Let $V_B$ be a right $B$-module, and let ${}_AU$ be a left $A$-module. For each $i\geq 0$, there is a natural isomorphism
$$\Ext^i_{A}((A\otimes V)\otimes_{\Lambda}X, {}_AU)\cong \Ext_{\Lambda}^i(X, U\otimes \D(V)).$$
\end{lem}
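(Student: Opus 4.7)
The plan is to reduce the statement to an application of Lemma~\ref{lemma-basic-homological-isomorphism}(3), with the key bimodule being $M := A\otimes V$. Viewed as an $A$-$\Lambda$-bimodule, the left $A$-action on $M$ is multiplication on the first tensor factor, and the right $\Lambda = A\otimes B$-action is $(a\otimes v)\cdot(a'\otimes b) = aa'\otimes vb$. Matching notations (the lemma's ``$A$'' is our $\Lambda$ and its ``$B$'' is our $A$), the lemma will deliver a natural isomorphism
$$\Ext_A^i\bigl((A\otimes V)\otimes_\Lambda X,\, U\bigr)\;\cong\;\Ext_\Lambda^i\bigl(X,\, \Hom_A(A\otimes V,\, U)\bigr),$$
as soon as the two vanishing hypotheses $\Tor_i^\Lambda(A\otimes V, X) = 0$ and $\Ext_A^i(A\otimes V, U) = 0$ for $i\geq 1$ are verified.

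Both of these are cheap. The first is exactly the content of $X\in\Mon(B, A)$ as repackaged in Lemma~\ref{lemma-monicRep-equiv-condition}(2), where we take the right $B$-module to be $V$. The second follows because the left $A$-module $A\otimes V$ is free of rank $\dim_k V$, hence projective, so all higher $\Ext$'s into any $U$ vanish.

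It then remains to identify the right-hand side with $\Ext_\Lambda^i(X, U\otimes \D(V))$. For this, combining the standard hom-tensor adjunction with finite-dimensionality of $V$ produces the chain of $k$-isomorphisms
$$\Hom_A(A\otimes V,\, U)\;\cong\;\Hom_k(V, U)\;\cong\;U\otimes \D(V).$$
What must be checked here is not the isomorphism at the level of $k$-spaces but that the left $\Lambda$-structure on $\Hom_A(A\otimes V, U)$ induced by the right $\Lambda$-action on $A\otimes V$ agrees with the evident tensor-product $\Lambda$-structure on $U\otimes \D(V)$; concretely, the right $A$-action on the first factor of $A\otimes V$ dualizes to the left $A$-action on $U$, and the right $B$-action on $V$ dualizes to the left $B$-action on $\D(V)$ given by $(b\cdot f)(v) = f(vb)$. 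A short direct verification with a $k$-basis of $V$ (or a naturality argument using Lemma~\ref{lemma-tensorModuleHom}(2) and (5)) confirms this.

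The main, and really only, obstacle is this last compatibility of $\Lambda$-module structures in the identification $\Hom_A(A\otimes V, U)\cong U\otimes \D(V)$. Once that is in place, composing the two isomorphisms yields the required natural isomorphism, and naturality in $U$ and $V$ is inherited from the ingredients.
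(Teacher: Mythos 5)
Your proposal is correct and follows essentially the same route as the paper: both verify the two Tor/Ext vanishing hypotheses (the first from $X\in\Mon(B,A)$, the second from projectivity of ${}_A(A\otimes V)$), apply Lemma~\ref{lemma-basic-homological-isomorphism}(3) to the $A$-$\Lambda$-bimodule $A\otimes V$, and then identify $\Hom_A(A\otimes V, U)\cong U\otimes\D(V)$. The only difference is that you spell out the compatibility of $\Lambda$-module structures in that last identification, which the paper leaves implicit.
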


\begin{proof}
Since ${}_{\Lambda}X\in\Mon(B, A)$, we have $\Tor_i^{\Lambda}(A\otimes V, X)=0$ for all $i>0$. Note that $\Ext_A^i(A\otimes V, U)=0$ for all $i>0$ since ${}_A(A\otimes V)$ is projective. Thus, it follows from Lemma \ref{lemma-basic-homological-isomorphism} (3) that
$$\Ext_{A}^i((A\otimes V)\otimes_{\Lambda}X, U)\cong \Ext_{\Lambda}^i(X, \Hom_{A}(A\otimes V, U))\cong \Ext_{\Lambda}^i(X, U\otimes D(V))$$
for all $i\geq 0$.
\end{proof}

\subsection{Properties}
In this subsection, we keep the notations above. In general, we can define the monomorphism category of $B$ over a full additive subcategory $\mathscr{L}$ of $\modcat{A}$, namely $\Mon(B, \mathscr{L})$, which is a full subcategory of $\Mon(B, A)$. Note that the $\Lambda$-module in the  monomorphism category  $\Mon(B, A)$ is not Gorenstein projective in general. To classify  Gorenstein projective $\Lambda$-modules in $\Mon(B, A)$, we need the special monomorphism category $\Mon(B, A\Gp)$.

In this subsection, we shall study some properties of $\Mon(B, \mathscr{L})$. At first, the precise definition is as follows.

 \begin{defn}
 Let $\mathscr{L}$ be a full additive   subcategory of $\modcat{A}$.  Define
$$\Mon(B, \mathscr{L}):=\{X\in\Mon(A, B)\ |\ (A\otimes V)\otimes_{\Lambda}X\in \mathscr{L}\mbox{ whenever } V\in \mbox{\rm mod-}B\}.$$
\end{defn}

If $\mathscr{L}=\modcat{A}$, then $\Mon(B, \mathscr{L})$ is just $\Mon(B, A)$ itself.

\medskip

 The following proposition collects some facts on $\Mon(B, \mathscr{L})$.

\begin{prop} \label{proposition-mon(LB)}
Let $\mathscr{L}$ be an additive full subcategory of $\modcat{A}$. We have the following statements.
\begin{itemize}
 \setlength\itemsep{0mm}
 \item[$(1)$.] Suppose that $\mathscr{L}$ is closed under extensions, and that $X\in\Mon(B, A)$. Then the following are equivalent.
\begin{itemize}
 \setlength\itemsep{0mm}
 \item[$(a)$.] $X\in\Mon(B, \mathscr{L})$.

 \item[$(b)$.] The $A$-module  $(A\otimes V)\otimes_{\Lambda}X$ belongs to $\mathscr{L}$ for all projective and for all simple right $B$-modules $V$.

 \item[$(c)$.]  For each simple right $B$-module  $S$, the $A$-module  $(A\otimes S)\otimes_{\Lambda}X$ belongs to $\mathscr{L}$.
\end{itemize}

\item[$(2)$.] Assume that $\mathscr{L}$ is closed under direct summands. Then
\begin{itemize}
 \setlength\itemsep{0mm}
 \item[$(a)$.] If $X\in\Mon(B, \mathscr{L})$, then for any projective left $B$-module $P$,  ~${}_A\Hom_B(P, {}_BX)\in \mathscr{L}$. In particular, ${}_AX\in \mathscr{L}$.

 \item[$(b)$.] If $0\neq L\in \mathscr{L}$ and $U\in\modcat{B}$, then $L\otimes U\in \Mon(B, \mathscr{L})$ if and only if ${}_BU$ is projective .
\end{itemize}
 \setlength\itemsep{0mm}
 \item[$(3)$.] If $\mathscr{L}$ is closed under taking extensions (resp. kernels of epimorphisms, direct summands), then so is ${\rm Mon}(B, \mathscr{L})$.

 \item[$(4)$.] If $\mathscr{L}$ is a resolving subcategory of $\modcat{A}$, then  $\Mon(B, \mathscr{L})$ is a resolving subcategory of $\modcat{\Lambda}$.
\end{itemize}
\end{prop}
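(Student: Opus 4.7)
The proof naturally splits into four parts, all organized around the key homological fact (Lemma~\ref{lemma-monicRep-equiv-condition}(2)) that for $X\in\Mon(B,A)$ one has $\Tor_i^\Lambda(A\otimes V,X)=0$ for every $V\in\mbox{\sf mod-}B$ and $i\geq 1$, so that the functor $(A\otimes -)\otimes_\Lambda X$ preserves short exact sequences of right $B$-modules.

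For part (1), the implications (a)$\Rightarrow$(b)$\Rightarrow$(c) are immediate. The substance lies in (c)$\Rightarrow$(a): fix $V\in\mbox{\sf mod-}B$ together with a composition series $0=V_0\subsetneq V_1\subsetneq\cdots\subsetneq V_n=V$ whose quotients $S_i:=V_i/V_{i-1}$ are simple; apply $A\otimes -$ to each $0\to V_{i-1}\to V_i\to S_i\to 0$ and then tensor with $X$ over $\Lambda$. The $\Tor$-vanishing yields short exact sequences $0\to (A\otimes V_{i-1})\otimes_\Lambda X\to (A\otimes V_i)\otimes_\Lambda X\to (A\otimes S_i)\otimes_\Lambda X\to 0$ in $\modcat{A}$ whose outer terms lie in $\mathscr{L}$ (by induction and the hypothesis in (c)), so closure of $\mathscr{L}$ under extensions pushes membership to the middle.

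For (2)(a), I first use Lemma~\ref{lemma-tensorModuleHom}(4) to identify $(A\otimes B)\otimes_\Lambda X\cong X$ as left $A$-modules, so taking $V=B$ in the definition of $\Mon(B,\mathscr{L})$ gives ${}_AX\in\mathscr{L}$. For an arbitrary finitely generated projective left $B$-module $P$, Lemma~\ref{lemma-tensorModuleHom}(6) identifies $\Hom_B(P,X)\cong(A\otimes P^*)\otimes_\Lambda X$; writing $P^*$ as a direct summand of some $B^n$ realizes this as an $A$-summand of $X^n\in\mathscr{L}$, and closure of $\mathscr{L}$ under summands concludes. For (2)(b), Lemma~\ref{lemma-monicRep-equiv-condition}(5) says $L\otimes U\in\Mon(B,A)$ iff the left $B$-module ${}_B(L\otimes U)\cong U^{\dim_k L}$ is projective, equivalently (since $L\neq 0$) iff ${}_BU$ is projective. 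When ${}_BU$ is projective, Lemma~\ref{lemma-tensorModuleHom}(3) computes $(A\otimes V)\otimes_\Lambda(L\otimes U)\cong L\otimes_k(V\otimes_B U)\cong L^{\dim_k(V\otimes_B U)}$ in $\modcat{A}$, which lies in $\mathscr{L}$ by additivity; conversely, $L\otimes U\in\Mon(B,\mathscr{L})\subseteq\Mon(B,A)$ already forces ${}_BU$ to be projective.

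Part (3) then follows by combining Corollary~\ref{corollary-Mon(BA)-resolving} (which handles membership in $\Mon(B,A)$ for each term of any short exact sequence under the hypothesis in question) with the $\Tor$-vanishing observation above: for $0\to X\to Y\to Z\to 0$ in $\modcat{\Lambda}$ with the relevant two terms in $\Mon(B,\mathscr{L})$, applying $(A\otimes V)\otimes_\Lambda -$ produces a short exact sequence of $A$-modules whose two known terms lie in $\mathscr{L}$, and the appropriate closure property of $\mathscr{L}$ then places the third one there as well. For (4), once (3) is in hand, the only remaining task is to verify that all projective $\Lambda$-modules lie in $\Mon(B,\mathscr{L})$; by the closure under summands from (3) this reduces to $\Lambda=A\otimes B\in\Mon(B,\mathscr{L})$, which is exactly (2)(b) with $L=A$ (in $\mathscr{L}$ because $\mathscr{L}$ is resolving, hence contains projectives) and $U=B$ (projective). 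No step presents a genuine obstacle; the only mild care required is the bookkeeping in (3), namely tracking which term of the sequence must lie in $\Mon(B,A)$ to ensure that $\Tor_1^\Lambda(A\otimes V,-)$ vanishes there, so that the resulting sequence of $A$-modules remains short exact.
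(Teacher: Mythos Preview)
Your proof is correct and follows essentially the same route as the paper's own argument: the same $\Tor$-vanishing from Lemma~\ref{lemma-monicRep-equiv-condition} drives (1) and (3), the same identifications from Lemma~\ref{lemma-tensorModuleHom} drive (2), and (4) is assembled from (2)(b) and (3) just as in the paper. The only cosmetic difference is that in (3) you invoke Corollary~\ref{corollary-Mon(BA)-resolving} to obtain membership of the third term in $\Mon(B,A)$, whereas the paper re-derives the needed $\Tor$-vanishing directly from the long exact sequence; your packaging is slightly cleaner but not a different idea.
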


\begin{proof}
$(1).$ The implications $(a)\Rightarrow (b)$ and $(b)\Rightarrow (c)$ are obvious. Since $X\in\Mon(B, A)$, we have $\Tor_i^{\Lambda}(A\otimes V, X)=0$ for all $V\in\mbox{\rm mod-}B$ and $i\ge1$. Thus $(A\otimes -)\otimes_{\Lambda}X$ is an exact functor from {\sf mod}-$B$ to $\modcat{A}$.  Since $\mathscr{L}$ is closed under extensions and $(A\otimes S)\otimes_{\Lambda}X\in\mathscr{L}$ for all simple right $B$-modules $S$, we deduce that $(A\otimes V)\otimes_{\Lambda}X$ belongs to $\mathscr{L}$ for all right $B$-modules $V$. This proves that $(c)\Rightarrow (a)$.

\smallskip

$(2.a).$  Note that, for each $X\in\Mon(B, \mathscr{L})$ and for each indecomposable projective left $B$-module $P$, the $A$-module  ${}_A\Hom_B(P, {}_BX)$ is a direct summand of ${}_A\Hom_B(B, {}_BX)\cong {}_AX\cong {}_A(A\otimes B)\otimes_{\Lambda}X$ which is in $\mathscr{L}$.
 Hence $\Hom_B(P, X)$ belongs to $\mathscr{L}$ as a left $A$-module for all projective left $B$-module $P$ and ${}_AX\in\mathscr{L}$.

\smallskip

$(2.b).$ By Lemma \ref{lemma-monicRep-equiv-condition} , the $\Lambda$-module $L\otimes U\in\Mon(B, A)$ if and only ${}_B(L\otimes U)$ is projective if and only if ${}_BU$ is projective.   Moreover, in this case, $(A\otimes V)\otimes_{\Lambda}(L\otimes U)\cong (A\otimes_AL)\otimes (V\otimes_BU)\in\add(L)$  as left $A$-modules for all right $B$-modules $V$. It follows that $L\otimes U$ is actually in $\Mon(B, \mathscr{L})$.

\smallskip

$(3)$.  Suppose that $0\to X\to Y \to Z\to 0$ is a short exact sequence in $\modcat{\Lambda}$ with $X, Z\in\Mon(B, \mathscr{L})$. Let $V$ be a right $B$-module. Applying  $(A\otimes V)\otimes_{\Lambda}-$ results in an exact sequence
$$\Tor_i^{\Lambda}(A\otimes V, X)\lra \Tor_i^{\Lambda}(A\otimes V, Y)\lra \Tor_i^{\Lambda}(A\otimes V, Z) $$
for all $i\geq 0$.  By Lemma \ref{lemma-monicRep-equiv-condition}, the first and the third terms vanish for all $i\geq 1$.  Hence the second term $\Tor_i^{\Lambda}(A\otimes V, Y)$ vanishes for all $i\geq 1$.  Consequently, $Y\in\Mon(B, A)$ by Lemma \ref{lemma-monicRep-equiv-condition}. Moreover, there is a short exact sequence
$$0\lra (A\otimes V)\otimes_{\Lambda}X\lra (A\otimes V)\otimes_{\Lambda}Y\lra (A\otimes V)\otimes_{\Lambda}Z\lra 0$$
of left $A$-modules. Note that the terms $(A\otimes V)\otimes_{\Lambda}X$ and $(A\otimes V)\otimes_{\Lambda}Z$ belong to $\mathscr{L}$ as left $A$-modules, since $X, Z\in\Mon(B, \mathscr{L})$.  Since $\mathscr{L}$ is closed under extensions, we deduce that $(A\otimes V)\otimes_{\Lambda}Y\in \mathscr{L}$,  and therefore $Y\in\Mon(B, \mathscr{L})$.  In case that $\mathscr{L}$ is closed under kernels of epimorphisms or direct summands, one can similarly show that so is $\Mon(B, \mathscr{L})$.

\smallskip

$(4)$. By assumption, $\mathscr{L}$ contains $\projcat{A}$ and is closed under taking extensions, direct summands and kernels of epimorphisms. By $(2.b)$ and $(3)$, we see that $\Mon(B, \mathscr{L})$ contains $A\otimes B$ and is closed under extensions, direct summands and kernels of epimorphisms. Namely, $\Mon(B, \mathscr{L})$ is a resolving subcategory of $\modcat{\Lambda}$.
\end{proof}

Let $\Gamma$ be an algebra, and let $\mathscr{X}$ be a full subcategory of $\modcat{\Gamma}$ closed under extensions and direct summands.  An object $U$ in $\mathscr{X}$ is said to be a {\em projective object in $\mathscr{X}$} provided that, for each short exact sequence $0\ra X\ra Y\ra Z\ra 0$ in $\mathscr{X}$, the sequence $0\ra \Hom_{\Gamma}(U, X)\ra \Hom_{\Gamma}(U, Y)\ra \Hom_{\Gamma}(U, Z)\ra 0$ is again exact.  Similarly, one can define injective objects in $\mathscr{X}$.  In general, projective objects or injective objects in $\mathscr{X}$ may not exist.  An object $I\in\mathscr{X}$ is called an {\em injective cogenerator} of $\mathscr{X}$ provided that $I$ is an injective object in $\mathscr{X}$ and that, for each $X\in\mathscr{X}$, there is a short exact sequence $0\ra X\ra I_X\ra Y\ra 0$ in $\mathscr{X}$ with $I_X\in\add(I)$. The {\em projective generator} in $\mathscr{X}$ is defined dually.

We consider projective and  injective  objects in $\mathrm{Mon}(B, \mathscr{L})$.

\begin{prop}\label{proposition-proj-in-Mon(BL)}
Let $\mathscr{L}$ be a  full subcategory of $\modcat{A}$ closed under extensions and direct summands.
\begin{itemize}
 \setlength\itemsep{0mm}
\item[$(1).$]  If $L$ is a projective  object in $\mathscr{L}$  and $P$ a projective left $B$-module, then $L\otimes P$ is a projective  object in $\Mon(B, \mathscr{L})$.

\item[$(2). $]  Assume that for each $X\in \Mon(B, \mathscr{L})$, the kernel of the natural $A$-module homomorphism
 $$X\otimes  (B/\rad(B))\to (B/\rad(B))\otimes_B X, \quad  x\otimes b\mapsto b\otimes x$$ belongs to $\mathscr{L}$. In particular, this holds when $\mathscr{L}$ is closed under kernels of epimorphisms. Then we have the following statements.
\begin{itemize}
	\item[{\rm (a).}]  If $\mathscr{L}$ has enough projective objects,   so does $\mathrm{Mon}(B, \mathscr{L})$.

    \item[{\rm (b).}]  Let $P$ be a projective generator of $\mathscr{L}$. Then $P\otimes  B$ is a projective generator of $\mathrm{Mon}(B, \mathscr{L})$.\end{itemize}
\end{itemize}
\end{prop}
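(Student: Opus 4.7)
The plan for (1) is to combine the adjunction $\Hom_{\Lambda}(L\otimes P, -)\cong \Hom_A(L, \Hom_B(P, -))$ from Lemma~\ref{lemma-tensorModuleHom}(6) with the fact, from Lemma~\ref{lemma-monicRep-equiv-condition}(5), that every object of $\Mon(B, \mathscr{L})$ is projective over $B$. Concretely, given a short exact sequence $0\to X\to Y\to Z\to 0$ in $\Mon(B, \mathscr{L})$: because $X,Y,Z$ are $B$-projective and $P$ is $B$-projective, $\Hom_B(P,-)$ keeps the sequence exact, and by Proposition~\ref{proposition-mon(LB)}(2.a) its three values lie in $\mathscr{L}$; since $L$ is projective in $\mathscr{L}$, applying $\Hom_A(L,-)$ again preserves exactness. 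Thus $\Hom_\Lambda(L\otimes P,-)$ is exact on the original sequence, so $L\otimes P$ is projective in $\Mon(B,\mathscr{L})$.

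For parts (2.a) and (2.b) the key construction is the same. Given $X\in\Mon(B,\mathscr{L})$, by Proposition~\ref{proposition-mon(LB)}(2.a) we have ${}_AX\in\mathscr{L}$; choose a deflation $\pi\colon L\twoheadrightarrow{}_AX$ in $\mathscr{L}$ with $L$ projective in $\mathscr{L}$ (for (b) take $L=P^n$ with $P$ the projective generator). The adjunction of Lemma~\ref{lemma-tensorModuleHom}(6) converts $\pi$ into a $\Lambda$-map $\phi\colon L\otimes B\to X$ with $\phi(l\otimes 1)=\pi(l)$, and $\phi$ is surjective because $\pi$ is. By (1), $L\otimes B$ is projective in $\Mon(B,\mathscr{L})$, so it suffices to show $K:=\Ker\phi\in\Mon(B,\mathscr{L})$. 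By the resolving property of $\Mon(B,A)$ (Corollary~\ref{corollary-Mon(BA)-resolving}), $K\in\Mon(B,A)$. For each right $B$-module $V$, applying $(A\otimes V)\otimes_\Lambda-$ (higher Tor's vanish by the monic condition) and using Lemma~\ref{lemma-tensorModuleHom}(3,4) yields
$$0\to (A\otimes V)\otimes_\Lambda K\to L\otimes V\to V\otimes_B X\to 0$$
in $\modcat{A}$, in which $L\otimes V\cong L^{\dim_k V}\in\mathscr{L}$ and $V\otimes_B X\in\mathscr{L}$ (using $X\in\Mon(B,\mathscr{L})$). When $\mathscr{L}$ is closed under kernels of epimorphisms the leftmost term lies in $\mathscr{L}$ immediately, finishing both (2.a) and (2.b) in that case.

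The main obstacle will be handling the weaker standing hypothesis, where only $K'_X:=\Ker\bigl(X\otimes(B/\rad B)\to (B/\rad B)\otimes_B X\bigr)$ is assumed to lie in $\mathscr{L}$. Here I would refine the construction: take a surjection $\rho\colon L_0\twoheadrightarrow X/\rad(B)X$ in $\mathscr{L}$ with $L_0$ projective in $\mathscr{L}$, lift it to $\tilde\rho\colon L_0\to X\otimes(B/\rad B)$ across the sequence $0\to K'_X\to X\otimes(B/\rad B)\to X/\rad(B)X\to 0$ (which now sits entirely in $\mathscr{L}$ by hypothesis), decompose $\tilde\rho=(\tilde\rho_i)$ into $m=\dim_k(B/\rad B)$ components $\tilde\rho_i\colon L_0\to X$ in $\modcat{A}$, and assemble the corresponding adjoints $\psi_i\colon L_0\otimes B\to X$ into a single $\Lambda$-map $\Psi\colon(L_0\otimes B)^m\to X$. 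The nilpotency of $\rad B$ together with Nakayama's lemma will force $\Psi$ to be surjective, and a snake-lemma comparison of the sequence obtained by applying $(A\otimes V)\otimes_\Lambda-$ to $\Ker\Psi$ with the hypothesis-controlled $K'_X$ should identify the remaining kernels as extensions of terms in $\mathscr{L}$. This last verification, making sure the finer hypothesis alone suffices for every $V$, is the subtle point.
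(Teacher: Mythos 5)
Part (1) and the case of (2) where $\mathscr{L}$ is closed under kernels of epimorphisms are correct and essentially follow the paper's route. But the proposition's actual hypothesis in (2) is only that the kernel of the natural map $X\otimes(B/\rad(B))\to(B/\rad(B))\otimes_BX$ lies in $\mathscr{L}$, and for that case your argument has a genuine gap: you explicitly leave the decisive verification (``making sure the finer hypothesis alone suffices for every $V$'') unproved, and the Nakayama/snake-lemma scheme you sketch does not obviously close it. Two things are missing. First, the hypothesis must be upgraded from $B/\rad(B)$ to \emph{all} right $B$-modules $V$: since $B/\rad(B)$ is the sum of all simple right $B$-modules and $\mathscr{L}$ is closed under direct summands and extensions, one filters an arbitrary $V$ by semisimples and concludes that $\Ker\bigl(X\otimes V\to V\otimes_BX\bigr)\in\mathscr{L}$ for every $V$. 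Your refined construction tries to work with $B/\rad(B)$ alone, which is why the ``every $V$'' step remains out of reach.

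Second, and more structurally, you miss the factorization that makes the kernel computation transparent. Your surjection $\phi\colon L\otimes B\to X$ is exactly the composite of $h\otimes\mathrm{id}_B\colon L\otimes B\to X\otimes B$ (where $h=\pi$) with the multiplication map $g\colon X\otimes B\to X$, $x\otimes b\mapsto bx$. Hence $K=\Ker\phi$ is an extension of $\Ker(g)$ by $\Ker(h)\otimes B$. The second piece lies in $\Mon(B,\mathscr{L})$ by Proposition~\ref{proposition-mon(LB)}~(2.b). For the first piece, applying $(A\otimes V)\otimes_{\Lambda}-\cong V\otimes_B-$ to $0\to\Ker(g)\to X\otimes B\to X\to 0$ yields $0\to V\otimes_B\Ker(g)\to X\otimes V\to V\otimes_BX\to 0$, so $(A\otimes V)\otimes_\Lambda\Ker(g)$ is precisely the kernel that the (upgraded) hypothesis places in $\mathscr{L}$; thus $\Ker(g)\in\Mon(B,\mathscr{L})$, and $K\in\Mon(B,\mathscr{L})$ by closure under extensions. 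In your single-step sequence $0\to(A\otimes V)\otimes_\Lambda K\to L\otimes V\to V\otimes_BX\to 0$ the leftmost term is exactly an extension of $\Ker(h)\otimes V$ by $\Ker(X\otimes V\to V\otimes_BX)$, both in $\mathscr{L}$ --- that observation, rather than the component-wise lifting you propose, is what finishes (2.a), and taking $L\in\add(P)$ gives (2.b).
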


\begin{proof}

(1). By Proposition~\ref{proposition-mon(LB)} (3), $\Mon(B, \mathscr{L})$ is an  extension-closed subcategory of $\Lambda\modcat$, is thus an exact category.
Suppose that $L$ is a projective object in $\mathscr{L}$ and that $P$ is a projective left $B$-module. By Lemma~\ref{lemma-tensorModuleHom} (6), there is a natural isomorphism
$$\Hom_\Lambda(L\otimes  P, -)\cong \Hom_A (L, \Hom_B(P, -))$$
and by Proposition~\ref{proposition-mon(LB)} (2.a) , $ \Hom_B(P, -)$ is a well defined  exact functor from  $\Mon(B, \mathscr{L})$  to  $\mathscr{L}$. This shows that the functor
 $ \Hom_\Lambda(L\otimes  P, -): \Mon(B, \mathscr{L})\to k\modcat$ is exact.
  Hence $L\otimes P$ is a projective object in $\Mon(B, \mathscr{L})$.

\vskip5pt

$(2)$. Since $\mathscr{L}$ is closed under extensions and direct summands, the supplementary hypothesis is equivalent to saying that  for  each $X\in \Mon(B, \mathscr{L})$ and each simple right $B$-module $S$, the kernel of the natural $A$-module homomorphism
$$X\otimes  S\to S\otimes_B X, x\otimes b\mapsto b\otimes x$$
belongs to $\mathscr{L}$ and the latter  holds if and only if for each right $B$-module $V$, the kernel of the natural $A$-module homomorphism
  $$f: X\otimes  V\to V\otimes_B X, x\otimes v\mapsto v\otimes x$$
 belongs to $\mathscr{L}$.

 Under this supplementary hypothesis, we  claim that for  any  $X\in \Mon(B, \mathscr{L})$   the kernel of the natural homomorphism of $\Lambda$-modules $g: X\otimes  B\to X, \ x\otimes b\mapsto bx$ lies in $\Mon(B,
\mathscr{L})$. In fact, since $X\in \Mon(B, \mathscr{L})$ , for each right $B$-module $V$, applying the functor $(A\otimes  V)\otimes_\Lambda -\cong V\otimes_B-$ to the short exact sequence
$$0\to \mathrm{Ker}(g)\to X\otimes  B\stackrel{g}{\to} X\to 0,$$
gives a short exact sequence of left $A$-modules
$$0\to V\otimes_B\mathrm{Ker}(g)\to V\otimes_B(X\otimes  B)\cong  X\otimes  V \stackrel{f}{\to} V\otimes_BX\to 0. $$
The hypothesis shows that $(A\otimes V)\otimes_{\Lambda}\mathrm{Ker}(g)\cong V\otimes_B\mathrm{Ker}(g)\in \mathscr{L}$ and as a consequence, $\mathrm{Ker}(g)\in \Mon(B, \mathscr{L})$.

Let $X\in \Mon(B, \mathscr{L})$. Since $\mathscr{L}$ has enough projective objects, there exists an epimorphism $h: L\to {}_A X$ with $L$ a projective object in $\mathscr{L}$ and  $\mathrm{Ker}(h)\in \mathscr{L}$.
 Then the composite $L\otimes  B \stackrel{h\otimes id_B}{\to} X\otimes  B\stackrel{g}{\to} X$ is an epimorphism.  By (1), $L\otimes  B$ is a projective  object in $\Mon(B, \mathscr{L})$. It suffices to show that the kernel of this composite belongs to   $\Mon(B, \mathscr{L})$.  In fact, this kernel is an extension of $\mathrm{Ker}(g)$ by $\mathrm{Ker}(h\otimes id_B)\cong \mathrm{Ker}(h)\otimes  B$. The module $\mathrm{Ker}(g)\in \Mon(B, \mathscr{L})$ by the previous paragraph and  $\mathrm{Ker}(h\otimes id_B) \in \Mon(B, \mathscr{L})$ follows from Proposition~\ref{proposition-mon(LB)} (2.b). Since $\Mon(B, \mathscr{L})$ is closed under extensions,  the kernel of this composite belongs to   $\Mon(B, \mathscr{L})$. This proves (a).

Let $P$ be a projective generator of $\mathscr{L}$. In the  previous paragraph, one can take $L\in \mathrm{add}(P)$. Then the starting  term $L\otimes  B$ of the epimorphism  $L\otimes  B \stackrel{h\otimes id_B}{\to} X\otimes  B\stackrel{g}{\to} X$
belongs to $\mathrm{add}(P\otimes  B)$. This shows that $P\otimes  B$ is a projective generator of $\mathrm{Mon}(B, \mathscr{L})$.
\end{proof}

\begin{prop}\label{proposition-inj-in-Mon(BL)}
Let $\mathscr{L}$ be a  full subcategory of $\modcat{A}$ closed under extensions and direct summands.
\begin{itemize}
 \setlength\itemsep{0mm}
\item[$(1)$.]  If $L$ is an injective object in $\mathscr{L}$ and $P$ a projective left $B$-module, then $L\otimes P$ is an injective object in $\Mon(B, \mathscr{L})$.

\item[$(2)$.]  If $\mathscr{L}$ has enough injective objects, then so does $\Mon(B, \mathscr{L})$.

\item[$(3)$.] Let $I$ be an injective cogenerator of $\mathscr{L}$. Then $I\otimes  B$ is an injective cogenerator of $\Mon(B, \mathscr{L})$.
\end{itemize}
\end{prop}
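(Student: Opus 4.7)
The plan mirrors the projective argument of Proposition~\ref{proposition-proj-in-Mon(BL)}, exploiting the adjunction $(\D(B)\otimes_B -,\; -\otimes B)$ from Lemma~\ref{lemma-tensorModuleHom}(5). For (1), specializing that lemma to $V = P$ and $T = L$ gives the natural isomorphism
\[
\Hom_\Lambda(X, L \otimes P) \;\cong\; \Hom_A(\D(P) \otimes_B X,\; L).
\]
Applied to a short exact sequence in $\Mon(B, \mathscr{L})$, the functor $\D(P) \otimes_B -$ is exact by the Tor-vanishing characterization of Lemma~\ref{lemma-monicRep-equiv-condition}, and its image lies in $\mathscr{L}$ by the very definition of $\Mon(B, \mathscr{L})$; composing with $\Hom_A(-, L)$, which is exact on $\mathscr{L}$ since $L$ is injective there, shows via the adjunction that $\Hom_\Lambda(-, L \otimes P)$ is exact on the original sequence. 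Hence $L \otimes P$ is injective in $\Mon(B, \mathscr{L})$.

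For (2) and (3), given $X \in \Mon(B, \mathscr{L})$, set $FX := \D(B) \otimes_B X \in \mathscr{L}$. The hypothesis furnishes a short exact sequence $0 \to FX \xrightarrow{g} L \to N \to 0$ in $\mathscr{L}$ with $L$ injective (resp.\ $L \in \add(I)$) and $N \in \mathscr{L}$. Via the adjunction above with $P = B$, the map $g$ corresponds to a $\Lambda$-homomorphism $\tilde g : X \to L \otimes B$, explicitly $\tilde g = (g \otimes \mathrm{id}_B) \circ \eta_X$, where the unit has the formula $\eta_X(x) = \sum_i (\phi_i \otimes_B x) \otimes b_i$ for dual bases $\{b_i\}$ of $B$ and $\{\phi_i\}$ of $\D(B)$. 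A direct calculation shows that $\sigma_B : \D(B) \otimes B \to B$, $\phi \otimes b \mapsto \phi(1)b$, is a $B$-linear retraction of $\eta_B$; since $\eta$ is natural with respect to $B$-module maps between $\Lambda$-modules (the formula uses only the $B$-structure) and ${}_BX$ is projective by Lemma~\ref{lemma-monicRep-equiv-condition}(5) (so ${}_BX$ is a $B$-summand of some $B^n$), this transports to a $B$-retraction of $\eta_X$. In particular $\tilde g$ is injective.

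Write $Y := \cok(\tilde g)$ and $Z := \cok(\eta_X)$. The filtration $X \subset (FX) \otimes B \subset L \otimes B$ produces the short exact sequence $0 \to Z \to Y \to N \otimes B \to 0$, whose right-hand term belongs to $\Mon(B, \mathscr{L})$ by Proposition~\ref{proposition-mon(LB)}(2.b); closure of $\Mon(B, \mathscr{L})$ under extensions (Proposition~\ref{proposition-mon(LB)}(3)) therefore reduces the entire task to showing $Z \in \Mon(B, \mathscr{L})$. That $Z \in \Mon(B, A)$ is automatic: since $\eta_X$ is split $B$-injective, ${}_BZ$ is a summand of the free $B$-module $(FX) \otimes B$, hence projective, and Lemma~\ref{lemma-monicRep-equiv-condition}(5) applies. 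For each right $B$-module $V$, Tor-vanishing then yields an exact sequence $0 \to V \otimes_B X \to V \otimes FX \to V \otimes_B Z \to 0$ of $A$-modules whose first two terms lie in $\mathscr{L}$.

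The main obstacle will be to conclude $V \otimes_B Z \in \mathscr{L}$ from this last sequence, analogous to the extra hypothesis on kernels appearing in Proposition~\ref{proposition-proj-in-Mon(BL)}(2): the $B$-retraction $\sigma_X$ makes the sequence split as $k$-spaces, but the splitting is not $A$-linear in general. I plan to address this by first observing that for $V = \D(B)$ the counit $\epsilon_{FX} : \D(B) \otimes FX \to FX$, $\phi \otimes y \mapsto \phi(1) y$, is a genuinely $A$-linear retraction of $F\eta_X = \D(B) \otimes_B \eta_X$, so $\D(B) \otimes_B Z$ is an $A$-summand of $\D(B) \otimes FX \in \mathscr{L}$; and then reducing general $V$ to simple right $B$-modules via a composition-series filtration, using flatness of ${}_BZ$ to keep the filtration short-exact and the extension-closure of $\mathscr{L}$ to induct. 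Part (2) follows immediately, since any injective object $L$ of $\mathscr{L}$ makes $L \otimes B$ injective in $\Mon(B, \mathscr{L})$ by (1).
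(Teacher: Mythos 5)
Part (1) is correct and is essentially the paper's own argument. For (2)--(3) your set-up (unit of the adjunction $((A\otimes \D(B))\otimes_\Lambda -,\; -\otimes B)$, the $B$-linear retraction of $\eta_X$, the filtration $X\subset FX\otimes B\subset L\otimes B$, and the reduction to $Z=\cok(\eta_X)$) is coherent up to the point you yourself flag, but the flagged step is a genuine gap and your proposed repair does not close it. What you need, by Proposition~\ref{proposition-mon(LB)}~(1), is $S\otimes_B Z\in\mathscr{L}$ for every \emph{simple} right $B$-module $S$. All you have is a short exact sequence $0\to S\otimes_B X\to FX\otimes S\to S\otimes_B Z\to 0$ whose first two terms lie in $\mathscr{L}$; since $\mathscr{L}$ is only closed under extensions and direct summands, not under cokernels of monomorphisms (the relevant case $\mathscr{L}=A\Gp$ is not), this proves nothing. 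The counit observation does correctly place $\D(B)\otimes_B Z$ in $\mathscr{L}$ as an $A$-direct summand of $FX\otimes\D(B)$, but the subsequent ``reduction to simples via a composition series'' runs in the wrong direction: flatness of ${}_BZ$ gives a filtration of $\D(B)\otimes_B Z$ with subquotients $S_i\otimes_B Z$, and extension-closure lets you pass from the subquotients to the total module, never from the total module back to its subquotients. So membership of $\D(B)\otimes_B Z$ in $\mathscr{L}$ yields no information about any $S\otimes_B Z$.

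The paper circumvents exactly this difficulty by never working with the cokernel of a map into $FX\otimes B$ (where $FX$ is merely an object of $\mathscr{L}$), but instead with a left $\add(I\otimes B)$-approximation $\eta\colon X\to M$ with $I$ injective in $\mathscr{L}$. Injectivity of $I$ and extension-closure give $\Ext^1_\Lambda(X, I'\otimes U)=0$ via Lemma~\ref{lemma-monic-Ext-change-ring}, hence every map $X\to I'\otimes W$ factors through $\eta$; consequently the embedding $i_V\colon (A\otimes V)\otimes_\Lambda X\hookrightarrow I_V$ with cokernel in $\mathscr{L}$ factors through $(A\otimes V)\otimes_\Lambda\eta$, and the resulting pushout square produces a short exact sequence
$$0\lra (A\otimes V)\otimes_{\Lambda}M\lra I_V\oplus \big((A\otimes V)\otimes_{\Lambda}\cok(\eta)\big)\lra \cok(i_V)\lra 0$$
in which $(A\otimes V)\otimes_\Lambda\cok(\eta)$ appears as a \emph{direct summand of the middle term} of an extension of objects of $\mathscr{L}$ --- precisely the configuration the hypotheses on $\mathscr{L}$ can handle. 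To rescue your approach you would have to first embed $FX$ into an injective object of $\mathscr{L}$ and run this factorization/pushout argument against that injective; as written, the claim $Z\in\Mon(B,\mathscr{L})$ is unsupported.
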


 \begin{proof}
 (1). Using the natural isomorphism
$$\Hom_{A}((A\otimes \D(P))\otimes_{\Lambda}X, L)\cong \Hom_{\Lambda}(X, L\otimes P)$$
provided by Lemma \ref{lemma-tensorModuleHom} (5), we see that $L\otimes P$ is an injective object in $\Mon(B, \mathscr{L})$, as the functor $(A\otimes \D(P))\otimes_\Lambda-$ is an exact functor from $\Mon(B, \mathscr{L})$ to $\mathscr{L}$ and  $L$ is an injective object in $\mathscr{L}$.

\vskip5pt

 (2). Let $X\in\Mon(B, \mathscr{L})$.  Then $(A\otimes V)\otimes_{\Lambda}X\in\mathscr{L}$ for all right $B$-modules $V$. Since $\mathscr{L}$ has enough injective objects, there is an exact sequence
$$0\lra (A\otimes \D(B))\otimes_{\Lambda}X\lra I\lra L'\lra 0$$
in $\mathscr{L}$ such that $I$ is an injective object in $\mathscr{L}$.  We fix this $I$ for the rest of the proof.

\medskip
\noindent{\em Claim 1: For each right $B$-module $V$,  there is an embedding $i_V:   (A\otimes V)\otimes_{\Lambda}X\lra I_V$ with $I_V\in\add(I)$ such that $\cok (i_V)\in\mathscr{L}$. }

\medskip
 Actually, let $ 0\ra V\ra E\ra V'\ra 0$ be an exact sequence in $\mbox{\rm mod-}B$ with $E$ injective. Applying  $(A\otimes -)\otimes_{\Lambda}X$ gives rise to an exact sequence
 $$0\lra (A\otimes V)\otimes_{\Lambda}X\lraf{t} (A\otimes E)\otimes_{\Lambda}X\lra (A\otimes V')\otimes_{\Lambda}X\lra 0.$$
Since $E$ is injective, there is an embedding $s: (A\otimes E)\otimes_{\Lambda}X\ra I_V$ for some $I_V\in\add(I)$ such that $\cok(s)\in\mathscr{L}$. Define $i_V$ to be the composite $st$. Then $i_V$ is monic and $\cok(i_V)$, which is an extension of  $\cok(t)$ and $\cok(s)$, belongs to $\mathscr{L}$.  This proves Claim 1.

\medskip
 Let $\eta: X\lra M$ be a left $\add(I\otimes B)$-approximation of $X$. By (1), the module $M$ is an injective object in $\Mon(B, \mathscr{L})$. It suffices to prove that $\eta$ is a monomorphism and $\cok(\eta)$ still lies in $\Mon(B, \mathscr{L})$.

\medskip
\noindent
{\em Claim 2. For each ${}_BW\in\modcat{B}$ and $I'\in\add(I)$, every morphism in $\Hom_{\Lambda}(X, I'\otimes W)$ factors through $\eta$.}

 \medskip
 We first show that $\Ext_{\Lambda}^1(X, I'\otimes U)=0$ for all ${}_BU\in\modcat{B}$.  Actually,  by Lemma \ref{lemma-monic-Ext-change-ring}, there is an isomorphism $$\Ext_{\Lambda}^1(X, I'\otimes U)\cong \Ext_{A}^1((A\otimes D(U))\otimes_{\Lambda}X, I').$$
Moreover, the Ext-group $\Ext_{A}^1((A\otimes D(U))\otimes_{\Lambda}X, I')$ vanishes since $\mathscr{L}$ is closed under extensions and $I'$ is an injective object in $\mathscr{L}$. Hence $\Ext_{\Lambda}^1(X, I'\otimes U)=0$ for all ${}_BU\in\modcat{B}$.

Let $W$ be an arbitrary module in $\modcat{B}$, and let $g: X\ra I'\otimes W$ be an arbitrary $\Lambda$-module homomorphism. We can form the following diagram,
$$\xymatrix@R=12mm{
&& X\ar[rd]^(.35){g}\ar@{-->}[d]_{\alpha}\ar[r]^{\eta} &M\ar@{-->}[ld]^(.25){\beta}\\
0\ar[r] & I'\otimes  U\ar[r] & I'\otimes P_W\ar[r]^{\pi} &I'\otimes W\ar[r] & 0
}$$
such that $g=\pi\alpha$ and $\alpha=\beta \eta$, where $P_W$ is a projective cover of $_BW$. The fact $\Ext_{\Lambda}^1(X, I'\otimes  U)=0$  guarantees the existence of $\alpha$. The existence of $\beta$ follows from the fact that $\eta$ is a left $\add(I\otimes B)$-approximation. Hence $g=\pi\beta \eta$ factors through $\eta$.

\medskip
\noindent
{\em Claim 3. The morphism $\eta$ is a monomorphism.}

\medskip
By Claim 1, there is an embedding $i_B: {}_AX={}_A(A\otimes B)\otimes_{\Lambda}X\ra I_B$ for some $I_B\in\add(I)$ such that $\cok(i_B)\in\mathscr{L}$.
 Let $\theta: {}_{\Lambda}X\ra \Hom_A(\Lambda, I_B)$ be the image of $i_B$ under the natural isomorphism $$\Hom_{A}({}_AX, I_B)\cong\Hom_{\Lambda}(X, \Hom_A(\Lambda, I_B)).$$
By definition, the map $\theta$ sends each $x\in X$ to $(r\mapsto i_B(rx))$. It follows that $\Ker(\theta)=0$. Actually, for each $x\in\Ker(\theta)$, one has $i_B(rx)=0$ for all $r\in \Lambda$. Particularly, $i_B(1\cdot x)=0$, and hence $x=0$ since $i_B$ is a monomorphism. Note that there is an isomorphism $\Hom_A(\Lambda, I_B)\cong I_B\otimes D(B)$. From Claim 2, we deduce that $\theta$ factors through $\eta$. As a result, the morphism $\eta$ must be a monomorphism. This proves Claim 3.

\medskip
\noindent
{\em Claim 4: The cokernel of $\eta$ belongs to $\Mon(B, \mathscr{L})$}.

\medskip
Let $Z:=\cok(\eta)$, and let $V$ be an arbitrary right $B$-module. Applying $(A\otimes V)\otimes_{\Lambda}-$ to the exact  sequence
  $$0\lra X\lraf{\eta} M\lra Z\lra 0,$$ one gets an exact sequence in $\modcat{A}$
$$ (A\otimes V)\otimes_{\Lambda}X\lraf{1\otimes \eta}(A\otimes V)\otimes_{\Lambda}M\lra (A\otimes V)\otimes_{\Lambda}Z\lra 0. \quad\quad (\star)$$
By Claim 1, there is an embedding $i_V: (A\otimes V)\otimes_{\Lambda}X\ra I_V$ with $I_V\in\add(I)$ such that $\cok(i_V)\in\mathscr{L}$. Applying   $\Hom_A(-, I_V)$ to $(\star)$ gives rise to a commutative diagram with exact rows.
 $$\xymatrix@M=2mm{
 0\ar[r] &\Hom_{A}((A\otimes V)\otimes_{\Lambda}Z, I_V)\ar[d]^{\cong}\ar[r] & \Hom_{A}((A\otimes V)\otimes_{\Lambda}M, I_V)\ar[r]^{(1\otimes \eta)^*}\ar[d]^{\cong} & \Hom_{A}((A\otimes V)\otimes_{\Lambda}X, I_V)\ar[d]^{\cong}\\
  &\Hom_{\Lambda}(Z, I_V\otimes D(V))\ar[r] & \Hom_{\Lambda}(M, I_V\otimes D(V))\ar[r]^{\eta^*} & \Hom_{\Lambda}(X, I_V\otimes D(V))\ar[r] &0.\\
 }$$
 By Claim 2, the map  $\eta^*$ is surjective. It follows that $(1\otimes \eta)^*$ is surjective, and particularly $i_V$ factors through $1\otimes\eta$. Hence $1\otimes\eta$ must be a monomorphism.  Note that both $X$ and  $M$ also belong to $\Mon(B, \mathscr{L})$. By Lemma \ref{lemma-monicRep-equiv-condition}, we have $\Tor_m^{\Lambda}(A\otimes V, X)=0=\Tor_m^{\Lambda}(A\otimes V, M)$ for all $m\ge1$.  Together with the fact $1\otimes\eta$ is injective, we deduce that   $\Tor_m^{\Lambda}(A\otimes V, Z)=0$ for all $m\geq 1$, that is, $Z\in\Mon(B, A)$.  Moreover, we can form the following commutative diagram.
$$\xymatrix{
0\ar[r] & (A\otimes V)\otimes_{\Lambda}X\ar[r]^{1\otimes \eta}\ar@{=}[d] & (A\otimes V)\otimes_{\Lambda}M\ar[r]\ar@{-->}[d] & (A\otimes V)\otimes_{\Lambda}Z\ar[r]\ar@{-->}[d] & 0\\
0\ar[r] & (A\otimes V)\otimes_{\Lambda}X\ar[r]^{i_V} & I_V \ar[r] & \cok(i_V)\ar[r] & 0
}$$
in $\modcat{A}$ with exact rows.
The right square is a pullback and a pushout. Thus, we obtain a short exact sequence
$$0\lra (A\otimes V)\otimes_{\Lambda}M\lra I_V\oplus \big((A\otimes V)\otimes_{\Lambda}Z\big)\lra \cok(i_V)\lra 0$$
in $\modcat{A}$.  Since $\cok(i_V)$,  $(A\otimes V)\otimes_{\Lambda}M$ and $I_V$ are all in $\mathscr{L}$,  and   $\mathscr{L}$ is closed under extensions and direct summands, we deduce that $(A\otimes V)\otimes_{\Lambda}Z\in\mathscr{L}$. Hence $Z\in\Mon(B, \mathscr{L})$.

\vskip5pt

(3). The proof of (2) actually shows that if there is an embedding of $(A\otimes D(B))\otimes_{\Lambda}X$ into an injective object $I\in \mathscr{L}$ with cokernel in $\mathscr{L}$, then ${}_{\Lambda}X$ can be embedded into a module in $\add(I\otimes B)$ with cokernel in $\Mon(B, \mathscr{L})$. Thus (3) follows.
 \end{proof}

Let us remark that a special case of Proposition \ref{proposition-inj-in-Mon(BL)}, where $B$ is a path algebra of a finite acyclic quiver and $\mathscr{L}=\modcat{A}$, was studied in \cite[Theorem 1]{Song2016} by using combinatoric methods.

\begin{prop} \label{proposition-Frobenius}
The category $\Mon(B, A\Gp)$  is a Frobenius exact category with the projective-injective objects being projective $\Lambda$-modules. Moreover, the following are equivalent:
\begin{itemize}
 \setlength\itemsep{0mm}
 \item[$(1)$.] The exact category $\Mon(B, A)$ is Frobenius;

 \item[$(2)$.] $A$ is a selfinjective algebra;

 \item[$(3)$.] $\Mon(B, A)=\Mon(B, A\Gp)$.
\end{itemize}
\end{prop}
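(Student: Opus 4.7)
The plan is to prove the two parts of the proposition in sequence, leveraging the general machinery set up in Propositions~\ref{proposition-proj-in-Mon(BL)} and \ref{proposition-inj-in-Mon(BL)}.

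First I would show that $\Mon(B, A\Gp)$ is Frobenius with projective-injective objects being $\projcat{\Lambda}$, by applying those two propositions with $\mathscr{L} = A\Gp$. Recall that $A\Gp$ is itself a Frobenius exact category in which ${}_AA$ is simultaneously a projective generator and an injective cogenerator, the projective and injective objects of $A\Gp$ both being $\projcat{A}$. Moreover, $A\Gp$ is a resolving subcategory of $\modcat{A}$, hence closed under kernels of epimorphisms; thus the supplementary hypothesis of Proposition~\ref{proposition-proj-in-Mon(BL)}(2) is automatic. Part (2.b) then identifies $\Lambda = A \otimes B$ as a projective generator of $\Mon(B, A\Gp)$, while Proposition~\ref{proposition-inj-in-Mon(BL)}(3) identifies the same $\Lambda$ as an injective cogenerator. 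A standard splitting argument (using that $\Mon(B, A\Gp)$, being resolving, is closed under kernels of epimorphisms, so that every epi from a projective generator onto a projective object splits) then shows that the classes of projective objects and of injective objects in $\Mon(B, A\Gp)$ both coincide with $\add(\Lambda) = \projcat{\Lambda}$, giving the Frobenius structure.

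For the equivalence I would run the cycle $(2) \Rightarrow (3) \Rightarrow (1) \Rightarrow (2)$. The implication $(2) \Rightarrow (3)$ is immediate: if $A$ is self-injective then every finitely generated $A$-module is Gorenstein projective, so $A\Gp = \modcat{A}$ and $\Mon(B, A) = \Mon(B, A\Gp)$. The implication $(3) \Rightarrow (1)$ follows at once from the first part just proved. The substantive direction is $(1) \Rightarrow (2)$: apply the same two propositions now with $\mathscr{L} = \modcat{A}$, whose hypotheses are trivially satisfied. Proposition~\ref{proposition-proj-in-Mon(BL)}(2.b) identifies $A \otimes B$ as a projective generator of $\Mon(B, A)$, while Proposition~\ref{proposition-inj-in-Mon(BL)}(3), fed with the injective cogenerator $\D(A_A)$ of $\modcat{A}$, identifies $\D(A) \otimes B$ as an injective cogenerator. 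A Frobenius structure on $\Mon(B, A)$ forces these two additive closures to coincide in $\modcat{\Lambda}$, so in particular $A \otimes B$ is a direct summand of a finite power of $\D(A) \otimes B$. Restricting the $\Lambda$-action along $A \hookrightarrow \Lambda$, the underlying $A$-module of $A \otimes B$ is a direct sum of $\dim_k B$ copies of ${}_AA$ and that of $\D(A) \otimes B$ is $\dim_k B$ copies of ${}_A\D(A)$; hence ${}_AA$ appears as a summand of a finite direct sum of ${}_A\D(A)$'s, so ${}_AA$ is injective and $A$ is self-injective.

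The principal (mild) subtlety is the careful tracking of the projective generators and injective cogenerators supplied by Propositions~\ref{proposition-proj-in-Mon(BL)} and \ref{proposition-inj-in-Mon(BL)} in the two relevant situations; the key contrast is that when $\mathscr{L} = A\Gp$ the projective generator and the injective cogenerator of $\mathscr{L}$ already coincide (both equal $\projcat{A}$), whereas for $\mathscr{L} = \modcat{A}$ they differ precisely by the self-injectivity gap between ${}_AA$ and $\D(A_A)$. Once these identifications are in place, the Frobenius criterion reduces to a transparent comparison of additive closures inside $\modcat{\Lambda}$ that can be tested by the forgetful functor to $\modcat{A}$, and this directly characterises self-injectivity of $A$.
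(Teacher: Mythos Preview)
Your proposal is correct and follows essentially the same approach as the paper: invoke Propositions~\ref{proposition-proj-in-Mon(BL)} and \ref{proposition-inj-in-Mon(BL)} with $\mathscr{L}=A\Gp$ (using that $A\Gp$ is Frobenius and resolving) for the first assertion, and with $\mathscr{L}=\modcat{A}$ for the equivalences. The paper's proof is a single sentence pointing to these two propositions, so your write-up in fact supplies the details the paper omits---in particular your argument for $(1)\Rightarrow(2)$, comparing the projective generator $A\otimes B$ with the injective cogenerator $\D(A)\otimes B$ and restricting along $A\hookrightarrow\Lambda$, is exactly the natural way to extract self-injectivity from the Frobenius condition.
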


\begin{proof}This follows from Propositions~\ref{proposition-proj-in-Mon(BL)} and \ref{proposition-inj-in-Mon(BL)}, as $ A\Gp$ is a Frobenius category and is resolving in $A\modcat$.
\end{proof}

\medskip

The following proposition shows that how bimodules can transfer monic representations over one algebra to another.

\begin{prop}\label{proposition-Mon(BL)-to-Mon(CL)}
Keep the notations above.
Let $C$ be another algebra, and let ${}_CM_B$ be a $C$-$B$-bimodule. Suppose that $\mathscr{L}$ is a full subcategory of $\modcat{A}$ closed under extensions.  If ${}_CM$ is projective, then $(A\otimes M)\otimes_{\Lambda}-$ induces a functor from $\Mon(B, \mathscr{L})$ to $\Mon(C, \mathscr{L})$.
\end{prop}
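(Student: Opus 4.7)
The plan is to unwind both requirements in the definition of $\Mon(C,\mathscr{L})$ using the two homological lemmas in Section~2, treating everything as a computation of $\mathrm{Tor}$. First I would note that since ${}_CM_B$ is a $C$-$B$-bimodule, $A\otimes M$ carries the structure of an $(A\otimes C)$-$\Lambda$-bimodule (the left $A$ acting on the first tensorand, the right $A$ acting on the first tensorand, and $C$, $B$ acting on $M$). Hence $(A\otimes M)\otimes_\Lambda-$ is a well-defined additive functor from $\modcat{\Lambda}$ to $\modcat{(A\otimes C)}$, and functoriality in $X$ is automatic.

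Fix $X\in\Mon(B,\mathscr{L})$. By Lemma~\ref{lemma-monicRep-equiv-condition}, I need two things: $\Tor_i^{A\otimes C}(A\otimes V,(A\otimes M)\otimes_\Lambda X)=0$ for all $i\geq 1$ and all $V\in\mbox{\sf mod-}C$, and $(A\otimes V)\otimes_{A\otimes C}((A\otimes M)\otimes_\Lambda X)\in\mathscr{L}$ for all such $V$. The key algebraic identity, furnished by Lemma~\ref{lemma-tensorModuleHom}~(3), is
\[
(A\otimes V)\otimes_{A\otimes C}(A\otimes M)\;\cong\;(A\otimes_AA)\otimes(V\otimes_CM)\;\cong\;A\otimes(V\otimes_CM),
\]
where $V\otimes_CM$ is regarded as a right $B$-module. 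This will pull everything back from $A\otimes C$ to $\Lambda$.

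Next I would invoke Lemma~\ref{lemma-basic-homological-isomorphism}~(2) applied with the bimodule ${}_{A\otimes C}(A\otimes M)_\Lambda$, the right $(A\otimes C)$-module $A\otimes V$, and the left $\Lambda$-module $X$; this yields
\[
\Tor_i^{A\otimes C}\bigl(A\otimes V,\,(A\otimes M)\otimes_\Lambda X\bigr)\;\cong\;\Tor_i^\Lambda\bigl(A\otimes(V\otimes_CM),\,X\bigr)
\]
for all $i\geq 0$, provided the two vanishing hypotheses of that lemma hold. To verify $\Tor_i^\Lambda(A\otimes M,X)=0$ for $i\geq 1$: forgetting the $C$-action, $M$ is a right $B$-module, so $A\otimes M$ is of the shape $A\otimes W$ with $W\in\mbox{\sf mod-}B$; then this vanishing is exactly the monic-representation condition on $X$ (Lemma~\ref{lemma-monicRep-equiv-condition}). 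To verify $\Tor_i^{A\otimes C}(A\otimes V,A\otimes M)=0$ for $i\geq 1$: take a projective resolution $P^\bullet\to V$ in $\mbox{\sf mod-}C$, which gives a projective resolution $A\otimes P^\bullet\to A\otimes V$ over $A\otimes C$ by Lemma~\ref{Lem-modules-over-tensor-product}~(1); the above tensor identity turns $(A\otimes P^\bullet)\otimes_{A\otimes C}(A\otimes M)$ into $A\otimes(P^\bullet\otimes_CM)$, and since ${}_CM$ is projective (hence flat), $P^\bullet\otimes_CM\to V\otimes_CM$ is exact, so the higher Tor groups vanish.

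Once those two hypotheses are in place, the displayed isomorphism gives
\[
\Tor_i^{A\otimes C}\bigl(A\otimes V,\,(A\otimes M)\otimes_\Lambda X\bigr)\;\cong\;\Tor_i^\Lambda\bigl(A\otimes(V\otimes_CM),\,X\bigr)=0
\]
for $i\geq 1$ (using again that $V\otimes_CM\in\mbox{\sf mod-}B$ and $X\in\Mon(B,A)$), while the $i=0$ case gives
\[
(A\otimes V)\otimes_{A\otimes C}\bigl((A\otimes M)\otimes_\Lambda X\bigr)\;\cong\;\bigl(A\otimes(V\otimes_CM)\bigr)\otimes_\Lambda X\;\in\;\mathscr{L},
\]
since $X\in\Mon(B,\mathscr{L})$. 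This shows $(A\otimes M)\otimes_\Lambda X\in\Mon(C,\mathscr{L})$, completing the proof. The only real subtlety—what I would flag as the main thing to get right—is keeping the two bimodule structures on $A\otimes M$ straight so that the hypotheses of Lemma~\ref{lemma-basic-homological-isomorphism}~(2) match up; once that is done, the argument is a short double application of the standard tensor-Tor identities.
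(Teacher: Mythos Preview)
Your proof is correct and follows essentially the same route as the paper's: verify the two hypotheses of Lemma~\ref{lemma-basic-homological-isomorphism}~(2), use the resulting Tor isomorphism together with the identification $(A\otimes V)\otimes_{A\otimes C}(A\otimes M)\cong A\otimes(V\otimes_CM)$, and conclude via the definition of $\Mon(B,\mathscr{L})$. The only cosmetic difference is in checking $\Tor_i^{A\otimes C}(A\otimes V,A\otimes M)=0$: the paper observes directly that ${}_CM$ projective makes $A\otimes M$ projective as a left $(A\otimes C)$-module, whereas you argue via flatness of ${}_CM$ and a projective resolution of $V$; both are fine and equally short.
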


 \begin{proof}
 Set $\Lambda':=A\otimes C$.
 Let $X$ be a module in $\Mon(B, \mathscr{L})$. Then $\Tor_i^{\Lambda}(A\otimes M, X)=0$ for all $i\ge1$. Since ${}_CM$ is projective, the left $\Lambda'$-module $A\otimes M$ is projective, and thus $\Tor_i^{\Lambda'}(A\otimes V, A\otimes M)=0$ for all $i\ge1$ and for all right $C$-modules $V$. Then, by Lemma \ref{lemma-basic-homological-isomorphism} (2), we get
 $$\Tor_i^{\Lambda'}(A\otimes V, (A\otimes M)\otimes_{\Lambda}X)\cong \Tor_i^{\Lambda}((A\otimes V)\otimes_{\Lambda'}(A\otimes M), X)$$
 for all $i\geq 0$, where $V$ is an arbitrary right $C$-module. Note that $\Tor_i^{\Lambda}((A\otimes V)\otimes_{\Lambda'}(A\otimes M), X)\cong\Tor_i^{\Lambda}(A\otimes  (V\otimes_CM), X)=0$ for all $i\ge1$. It follows that $\Tor_i^{\Lambda'}(A\otimes V, (A\otimes M)\otimes_{\Lambda}X)=0$ for all $i\ge1$. Hence $(A\otimes M)\otimes_{\Lambda}X\in\Mon(C, A)$. The associativity of tensor product gives that $(A\otimes V)\otimes_{\Lambda'}((A\otimes M)\otimes_{\Lambda}X)\cong (A\otimes (V\otimes_CM))\otimes_{\Lambda}X$ which is in $\mathscr{L}$ by assumption. Hence $(A\otimes M)\otimes_{\Lambda}X\in\Mon(C, \mathscr{L})$.
  \end{proof}

As a corollary, Morita equivalent algebras have equivalent monomorphism categories.

\begin{cor}\label{corollary-Morita-BL-CL}
Suppose that the bimodule ${}_CM_B$ induces a Morita equivalence between two algebras $C$ and $B$. Let $\mathscr{L}$ be a full subcategory of $\modcat{A}$ closed under extensions. Then the functor $(A\otimes M)\otimes_{\Lambda}-$ induces an equivalence between $\Mon(B, \mathscr{L})$ and $\Mon(C, \mathscr{L})$.
\end{cor}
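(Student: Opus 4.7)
The plan is to invoke Proposition~\ref{proposition-Mon(BL)-to-Mon(CL)} twice to construct functors in both directions, and then verify that they are quasi-inverse via associativity of tensor product.

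Since ${}_CM_B$ induces a Morita equivalence, there exists a bimodule ${}_BN_C$ with natural isomorphisms of bimodules $N\otimes_CM\cong {}_BB_B$ and $M\otimes_BN\cong {}_CC_C$, and all four one-sided modules ${}_CM$, $M_B$, ${}_BN$, $N_C$ are finitely generated projective. In particular, both ${}_CM$ and ${}_BN$ are projective, so Proposition~\ref{proposition-Mon(BL)-to-Mon(CL)} yields functors
$$F:=(A\otimes M)\otimes_{\Lambda}-\colon \Mon(B,\mathscr{L})\lra \Mon(C,\mathscr{L})$$
and
$$G:=(A\otimes N)\otimes_{\Lambda'}-\colon \Mon(C,\mathscr{L})\lra \Mon(B,\mathscr{L}),$$
where $\Lambda'=A\otimes C$.

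Next I would show that $GF\cong \mathrm{id}$ and $FG\cong \mathrm{id}$. The key computation is an isomorphism of $\Lambda$-$\Lambda'$-bimodules
$$(A\otimes N)\otimes_{\Lambda'}(A\otimes M)\;\cong\; (A\otimes_AA)\otimes(N\otimes_CM)\;\cong\; A\otimes B\;=\;\Lambda,$$
obtained from Lemma~\ref{lemma-tensorModuleHom}(3) by taking $W=A$, $V=N$, $X=A$, $Y=M$ (and observing that the Morita isomorphism $N\otimes_CM\cong B$ is a bimodule isomorphism). Combining this with the associativity of tensor product gives, for any $X\in\Mon(B,\mathscr{L})$,
$$GF(X)\;=\;(A\otimes N)\otimes_{\Lambda'}\bigl((A\otimes M)\otimes_{\Lambda}X\bigr)\;\cong\;\bigl((A\otimes N)\otimes_{\Lambda'}(A\otimes M)\bigr)\otimes_{\Lambda}X\;\cong\;\Lambda\otimes_{\Lambda}X\;\cong\;X,$$
and all isomorphisms are natural in $X$. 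The symmetric argument, using $M\otimes_BN\cong C$, shows $FG\cong \mathrm{id}$.

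The only genuine subtlety is keeping track of the bimodule structures so that the associativity and Morita isomorphisms fit together at the level of $\Lambda$-modules (rather than merely of vector spaces); once the bimodule structure on $A\otimes M$ as a $\Lambda'$-$\Lambda$-bimodule and on $A\otimes N$ as a $\Lambda$-$\Lambda'$-bimodule is spelled out, this verification is routine using Lemma~\ref{lemma-tensorModuleHom}(3). No further hypothesis on $\mathscr{L}$ beyond closure under extensions is needed, since that was already enough for Proposition~\ref{proposition-Mon(BL)-to-Mon(CL)} to produce the two functors.
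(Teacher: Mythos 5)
Your proof is correct and follows exactly the route the paper intends: the corollary is stated without proof as an immediate consequence of Proposition~\ref{proposition-Mon(BL)-to-Mon(CL)}, and your argument (apply that proposition to the Morita bimodules $M$ and $N$ to get functors in both directions, then use $(A\otimes N)\otimes_{\Lambda'}(A\otimes M)\cong A\otimes(N\otimes_CM)\cong\Lambda$ from Lemma~\ref{lemma-tensorModuleHom}(3) together with associativity to see they are quasi-inverse) is the standard verification the authors leave to the reader. Your attention to the hypothesis that ${}_CM$ and ${}_BN$ are projective, and to the bimodule structures, is exactly what is needed.
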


\section{Gorenstein projective bimodules via monomophism categories}
 Throughout this section, we fix two finite dimensional $k$-algebras $A$ and $B$, and set $\Lambda:=A\otimes B$ to be their tensor product. It is natural to ask whether one can describe Gorenstein projective $\Lambda$-modules in terms of Gorenstein projective modules over $A$ and $B$. In this section, we shall give an approach to Gorenstein projective $\Lambda$-modules via monomorphism categories.

At fist, we use the monomorphism categories to describe the category of projective $\Lambda$-modules. We get the following result.

\begin{lem}\label{lemma-projective-bimodule-monocat}
Let $A$ and $B$ be two finite dimensional $k$-algebras. Assume that $k$ is a splitting field for $A$ or $B$. Then
$$\Mon(B, \projcat{A})=\projcat{\Lambda}=\Mon(A, \projcat{B}).$$
\end{lem}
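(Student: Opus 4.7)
The plan is to prove the first equality $\Mon(B, \projcat{A}) = \projcat{\Lambda}$; the symmetric identity $\projcat{\Lambda} = \Mon(A, \projcat{B})$ will then follow by interchanging the roles of $A$ and $B$, which is legitimate because the splitting hypothesis is imposed on $A$ \emph{or} $B$. Without loss of generality I shall assume that $k$ is a splitting field for $B$.

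For the inclusion $\projcat{\Lambda} \subseteq \Mon(B, \projcat{A})$, I would write $_AA = \bigoplus_i X_i$ and $_BB = \bigoplus_j Y_j$ as sums of indecomposable projectives, so that $_{\Lambda}\Lambda = \bigoplus_{i,j} X_i\otimes Y_j$. By Proposition~\ref{proposition-mon(LB)} (2.b) applied to $\mathscr{L} = \projcat{A}$, every summand $X_i \otimes Y_j$ belongs to $\Mon(B,\projcat{A})$; hence so does $_\Lambda\Lambda$. Closure of $\Mon(B,\projcat{A})$ under direct summands, provided by Proposition~\ref{proposition-mon(LB)} (3), then gives the claim. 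Note that this direction does not actually require the splitting hypothesis.

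For the reverse inclusion $\Mon(B, \projcat{A}) \subseteq \projcat{\Lambda}$, fix $X \in \Mon(B, \projcat{A})$. Since $\Lambda$ is finite dimensional, it suffices to show $\Ext^1_\Lambda(X, S) = 0$ for every simple $\Lambda$-module $S$, and Ext-vanishing then propagates along short exact sequences by induction on length. Under the splitting hypothesis, Lemma~\ref{Lem-modules-over-tensor-product} (3) identifies every simple $\Lambda$-module with some $S_A\otimes S_B$, where $S_A,S_B$ are simple over $A,B$ respectively. Taking $V = \D(S_B)$ (a simple right $B$-module) and $U = S_A$ in Lemma~\ref{lemma-monic-Ext-change-ring}, and combining $\D(\D(S_B))\cong S_B$ with the identification $(A\otimes \D(S_B))\otimes_\Lambda X \cong \D(S_B)\otimes_B X$ coming from Lemma~\ref{lemma-tensorModuleHom} (4), I obtain
$$\Ext^i_\Lambda\big(X,\, S_A\otimes S_B\big) \;\cong\; \Ext^i_A\big(\D(S_B)\otimes_B X,\, S_A\big) \qquad (i\geq 0).$$
By the very definition of $\Mon(B, \projcat{A})$, the left $A$-module $\D(S_B)\otimes_B X$ is projective, so the right-hand Ext groups vanish for every $i\geq 1$. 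This gives $\Ext^1_\Lambda(X,-)=0$ and hence $X \in \projcat{\Lambda}$.

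The only real obstacle is bookkeeping: one must select $V = \D(S_B)$ and $U = S_A$ so that the target $U\otimes \D(V) = S_A\otimes S_B$ in Lemma~\ref{lemma-monic-Ext-change-ring} is exactly a simple $\Lambda$-module, while the transferred module $(A\otimes V)\otimes_\Lambda X \cong \D(S_B)\otimes_B X$ is precisely one of the modules that the definition of $\Mon(B,\projcat{A})$ forces to be $A$-projective. Once this alignment is recognized, the argument collapses to a single application of each of the two homological lemmas. The symmetric equality $\projcat{\Lambda} = \Mon(A,\projcat{B})$ is obtained by running the same argument with $A$ and $B$ interchanged (using Lemma~\ref{Lem-modules-over-tensor-product} (3) in its symmetric form when $k$ splits $A$).
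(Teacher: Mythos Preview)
Your proof is correct and follows essentially the same route as the paper's: both reduce the nontrivial inclusion $\Mon(B,\projcat{A})\subseteq\projcat{\Lambda}$ to vanishing of $\Ext_\Lambda(X,S_A\otimes S_B)$ via Lemma~\ref{lemma-monic-Ext-change-ring}, using Lemma~\ref{Lem-modules-over-tensor-product}~(3) to identify the simple $\Lambda$-modules. Your write-up is slightly more explicit than the paper's on the easy inclusion (invoking Proposition~\ref{proposition-mon(LB)} rather than declaring it ``clear''), and you use only $\Ext^1$ rather than all $\Ext^i$, but these are cosmetic differences.

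One small point of bookkeeping: having fixed ``wlog $k$ splits $B$'', the symmetric equality $\Mon(A,\projcat{B})=\projcat{\Lambda}$ does not literally follow by swapping $A\leftrightarrow B$ in your argument (that swap would require $k$ to split $A$). What actually happens is that the conclusion of Lemma~\ref{Lem-modules-over-tensor-product}~(3)---that every simple $\Lambda$-module is a tensor of simples---holds under \emph{either} splitting hypothesis, and this single fact is all that is needed for \emph{both} equalities. The paper is equally terse on this point (``We just prove $\projcat{\Lambda}=\Mon(B,\projcat{A})$''), so this is not a gap peculiar to your version.
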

\begin{proof}
We just prove $\projcat{\Lambda}=\Mon(B, \projcat{A})$. It is clear that $\projcat{\Lambda}\subseteq\Mon(B, \projcat{A})$. Thus we only need to show that any module $X$ in $\Mon(B, \projcat{A})$ is a projective $\Lambda$-module. By Lemma \ref{Lem-modules-over-tensor-product} $(3)$, we just need to show, for all simple left $A$-module $T$ and all simple left $B$-module $S$, $\Ext_{\Lambda}^i(X, T\otimes S)=0$  for all $i\ge1$. Since $X\in\Mon(B, \projcat{A})\subseteq\Mon(B, A)$, it follows Lemma \ref{lemma-monic-Ext-change-ring} that
$$\Ext_{\Lambda}^i(X, T\otimes S)\cong\Ext^i_{A}((A\otimes \D(S))\otimes_{\Lambda}X, T), $$
for all $i\ge 1$. By the definition of $\Mon(B, \projcat{A})$, we get that $(A\otimes \D(S))\otimes_{\Lambda}X$ is a projective $A$-module and then $\Ext^i_{A}((A\otimes \D(S))\otimes_{\Lambda}X, T)=0$, for all $i\ge 1$. Thus the proof is completed.
\end{proof}

Similarly, one can ask the following natural question:

\begin{ques}\label{Ques-Mon}
When does $\Lambda\Gp$ coincide with $\Mon(B, A\Gp)$?
\end{ques}

The following proposition shows that $\Mon(B, A\Gp)$ is always contained in $\Lambda\Gp$.

\begin{prop}\label{proposition-monic-in-gorenstein}
Let $A$ and $B$ be two finite dimensional $k$-algebras. Then $\Mon(B, A\Gp)\subseteq\Lambda\Gp$.
\end{prop}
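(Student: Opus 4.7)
The plan is to produce a complete projective resolution of $X$ over $\Lambda$ by splicing together resolutions/coresolutions provided by the Frobenius structure on $\Mon(B, A\Gp)$, and then to verify the $\Hom_\Lambda(-,\Lambda)$-exactness using Lemma~\ref{lemma-monic-Ext-change-ring} applied to the right $B$-module $V = \D(B)$.

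First, fix $X \in \Mon(B, A\Gp)$. By Proposition~\ref{proposition-Frobenius}, $\Mon(B, A\Gp)$ is a Frobenius exact category whose projective-injective objects are precisely projective $\Lambda$-modules. Using its enough projectives (projective resolution going to the left) and enough injectives (injective coresolution going to the right), and splicing them along $X$, I obtain an exact complex
$$\cpx{P}:\ \cdots \lra P^{-1}\lra P^0 \lra P^1 \lra P^2 \lra \cdots$$
of projective $\Lambda$-modules with $X = \Ker(P^1 \to P^2)$ and with every cocycle $Z^i:=\Ker(P^i \to P^{i+1})$ again lying in $\Mon(B, A\Gp)$ (this closure under syzygies and cosyzygies in an exact category is built into the construction).

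Second, it remains to show that $\Hom_\Lambda(\cpx{P}, \Lambda)$ is exact. By a standard dimension-shift argument, it suffices to establish
$$\Ext^{j}_{\Lambda}(Z, \Lambda) = 0 \quad \text{for all } j \ge 1 \text{ and all } Z \in \Mon(B, A\Gp).$$
Apply Lemma~\ref{lemma-monic-Ext-change-ring} with right $B$-module $V = \D(B)$ and left $A$-module $U = {}_AA$; noting $U\otimes \D(V)\cong A\otimes B=\Lambda$, the lemma gives
$$\Ext^{j}_{\Lambda}(Z, \Lambda) \;\cong\; \Ext^{j}_{A}\bigl((A\otimes \D(B))\otimes_{\Lambda} Z,\ A\bigr).$$
By the definition of $\Mon(B, A\Gp)$, the left $A$-module $(A\otimes \D(B))\otimes_{\Lambda} Z$ lies in $A\Gp$. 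Since ${}_AA$ is projective (hence of projective dimension $0$), Lemma~\ref{lemma-Gorenstein-Tor-Ext-property}~(1) yields $\Ext_A^j(G, A) = 0$ for every $G \in A\Gp$ and $j \ge 1$. Combining the two displays, the required vanishing holds, so $\Hom_\Lambda(\cpx{P},\Lambda)$ is exact and $\cpx{P}$ is a complete projective resolution of $X$; thus $X \in \Lambda\Gp$.

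The only substantive point to verify is the passage from Ext-vanishing over $A$ (coming from $A\Gp$) to Ext-vanishing over $\Lambda$ against $\Lambda$ itself; this is exactly what Lemma~\ref{lemma-monic-Ext-change-ring} is built to accomplish, and the clean choice $V=\D(B)$ (so that $A\otimes\D(V)\cong \Lambda$) makes the reduction immediate. No separate treatment of each cocycle is needed, since the argument applies uniformly to every $Z \in \Mon(B, A\Gp)$.
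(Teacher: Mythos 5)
Your proof is correct and follows essentially the same route as the paper: the paper likewise obtains the right half of the complete resolution from the fact that $A\otimes B$ is an injective cogenerator of $\Mon(B, A\Gp)$ (Proposition~\ref{proposition-inj-in-Mon(BL)}, which underlies the Frobenius structure you invoke), and verifies $\Hom_\Lambda(-,\Lambda)$-exactness via exactly your computation $\Ext^i_{\Lambda}(X,\Lambda)\cong\Ext^i_A((A\otimes\D(B))\otimes_\Lambda X, A)=0$ from Lemma~\ref{lemma-monic-Ext-change-ring}. The only cosmetic difference is that the paper takes an ordinary projective resolution over $\Lambda$ for the left half rather than appealing to enough projectives in the exact category, which changes nothing of substance.
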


 \begin{proof}
 Let $X$ be a $\Lambda$-module in $\Mon(B, A\Gp)$. Then $(A\otimes V)\otimes_{\Lambda}X\in A\Gp$ for all right $B$-modules $V$.  Thus, by Lemma \ref{lemma-monic-Ext-change-ring},  the Ext-group
   $$\Ext_{\Lambda}^i(X, A\otimes B)\cong \Ext_A^i((A\otimes \D(B))\otimes_{\Lambda}X, A)$$
vanishes for all $i\ge1$. Namely, $X\in {}^{\perp}\Lambda$.

Since $A\Gp$ has an injective cogenerator ${}_AA$, we deduce from Proposition \ref{proposition-inj-in-Mon(BL)} that $A\otimes B$ is an injective cogenerator in $\Mon(B, A\Gp)$.  Thus, for each $X$ in $\Mon(B, A\Gp)$, we can construct an exact sequence
$$0\lra X\lraf{f^1} M^1\lraf{f^2}M^2\lra\cdots$$
such that  $M^i\in\projcat{\Lambda}\subseteq {}^{\perp}\Lambda$ and $\mathrm{Cok} (f^i)\in\Mon(B, A\Gp)$ for all $i$.  Taking a projective resolution $\cdots \lra M^{-1}\lra M^0\lra X\lra 0$ of $X$, it is easy to see that $\cdots \lra M^{-1}\lra M^0\lra M^1\lra M^2\lra\cdots$ is a complete projective resolution of $X$. Hence $X\in\Lambda\Gp$.
 \end{proof}

 It remains to consider when is $\Lambda\Gp$ contained in $\Mon(B, A\Gp)$. We first obtain a necessary condition which refers to the property of CM-free. Recall that an algebra $\Gamma$ is called CM-free provided that $\Gamma\Gp=\projcat{\Gamma}$. Let us remark that a CM-free algebra does not necessarily have finite global dimension.

\begin{lem}\label{lemma-cm-free-monic=gorenstein}
If $\Lambda\Gp=\Mon(B, A\Gp)$, then $B$ is CM-free.
\end{lem}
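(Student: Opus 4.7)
The plan is to take an arbitrary Gorenstein projective $B$-module $G$ and build a $\Lambda$-module from it whose $B$-structure forces $G$ to be projective, thereby showing $B\Gp \subseteq \projcat{B}$.

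First, I would form the $\Lambda$-module $A \otimes G$. Since ${}_AA$ is projective (hence Gorenstein projective) and ${}_BG$ is Gorenstein projective by assumption, Proposition~\ref{Prop-gorenstein-projective-bimodules}(1) tells us $A \otimes G \in \Lambda\Gp$. By the hypothesis $\Lambda\Gp = \Mon(B, A\Gp)$, we therefore have $A \otimes G \in \Mon(B, A\Gp) \subseteq \Mon(B, A)$.

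Next I would invoke the equivalence $(1) \Leftrightarrow (5)$ of Lemma~\ref{lemma-monicRep-equiv-condition}, which says that any module in $\Mon(B, A)$ is projective as a left $B$-module. Applied to our module, this says ${}_B(A \otimes G)$ is projective. But, choosing a $k$-basis of $A$, we have an isomorphism of left $B$-modules
$$
{}_B(A \otimes G) \;\cong\; G^{(\dim_k A)},
$$
so $G$ appears as a direct summand of a projective $B$-module, and is therefore itself projective. This gives $B\Gp \subseteq \projcat{B}$; the reverse inclusion is automatic, so $B$ is CM-free.

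The argument is quite short, so there is no real obstacle; the only point one needs to be careful about is identifying the $B$-module structure on $A \otimes G$ correctly, namely that the left $A$-factor contributes only its underlying $k$-vector-space dimension and thus becomes a direct sum of copies of $G$ once a $k$-basis of $A$ is fixed.
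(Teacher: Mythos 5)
Your proof is correct and follows essentially the same route as the paper: apply Proposition~\ref{Prop-gorenstein-projective-bimodules}(1) to place $A\otimes G$ in $\Lambda\Gp=\Mon(B,A\Gp)\subseteq\Mon(B,A)$, then use Lemma~\ref{lemma-monicRep-equiv-condition}~$(1)\Leftrightarrow(5)$ to conclude ${}_B(A\otimes G)$ is projective and hence $G$ is projective. Your extra remark identifying ${}_B(A\otimes G)\cong G^{(\dim_k A)}$ just makes explicit the last step the paper leaves implicit.
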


\begin{proof}
Let $V\in B\Gp$. By Proposition~\ref{Prop-gorenstein-projective-bimodules}, $$A\otimes  V\in \Lambda\Gp=\Mon(B, A\Gp).$$ Since $\Mon(B, A\Gp)\subseteq \Mon(B, A)$, we have ${}_B(A\otimes  V)\in B\projcat$, thus $V\in B\projcat$.
Hence, $B\Gp=B\projcat$ and $B$ is CM-free.
\end{proof}

The following theorem deals with the case that $B$ has finite global dimension.

\begin{thm}\label{theorem-gldimB-finite-monic=gorenstein}
 Suppose that $B$ is Gorenstein. Then $\Lambda\Gp=\Mon(B, A\Gp)$ if and only if $\gldim(B)<\infty$.
 \end{thm}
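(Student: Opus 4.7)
The forward direction is almost immediate: if $\Lambda\Gp=\Mon(B,A\Gp)$, then Lemma~\ref{lemma-cm-free-monic=gorenstein} forces $B$ to be CM-free, i.e.\ $B\Gp=\projcat{B}$. Since $B$ is Gorenstein with $d:=\id({}_BB)<\infty$, every $B$-module has a finite Gorenstein projective resolution of length at most $d$, and CM-freeness promotes each term to be projective, giving $\gldim(B)\le d<\infty$.

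For the reverse direction, the inclusion $\Mon(B,A\Gp)\subseteq\Lambda\Gp$ is Proposition~\ref{proposition-monic-in-gorenstein}, so the remaining task is $\Lambda\Gp\subseteq\Mon(B,A\Gp)$. Fix $G\in\Lambda\Gp$ together with a complete projective resolution $\cpx{P}$ over $\Lambda$, and let $V$ be any module in $\mbox{\sf mod-}B$. As $\gldim(B)<\infty$, $V$ admits a finite projective resolution over $B$; tensoring with $A$ over $k$ preserves exactness, and each $A\otimes Q$ with $Q$ a projective right $B$-module is a summand of some $A\otimes B^n=\Lambda^n$ and hence a projective right $\Lambda$-module. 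Thus $A\otimes V$ has finite projective dimension as a right $\Lambda$-module, and Lemma~\ref{lemma-Gorenstein-Tor-Ext-property}(2) yields $\Tor^\Lambda_i(A\otimes V,G)=0$ for all $i\ge 1$; in particular $G\in\Mon(B,A)$.

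To conclude $G\in\Mon(B,A\Gp)$ it remains, by definition of the category, to prove $(A\otimes V)\otimes_\Lambda G\in A\Gp$ for every $V\in\mbox{\sf mod-}B$. I plan to show that $(A\otimes V)\otimes_\Lambda\cpx{P}$ is a complete projective resolution of this module over $A$: exactness comes from Lemma~\ref{lemma-complete-resolution-basicProperty}(1) applied with the finite-pd right $\Lambda$-module $A\otimes V$; each term is $A$-projective because every $P^i$ is a summand of a free $\Lambda$-module while ${}_A(A\otimes V)\cong A^{\dim_k V}$ is $A$-free; and the critical $\Hom_A(-,A)$-exactness follows by invoking the adjunction of Lemma~\ref{lemma-tensorModuleHom}(5) (with the left $B$-module there chosen to be $\D(V)$) to obtain
$$\Hom_A\bigl((A\otimes V)\otimes_\Lambda\cpx{P},\,A\bigr)\;\cong\;\Hom_\Lambda\bigl(\cpx{P},\,A\otimes\D(V)\bigr),$$
and then observing that $\pd_\Lambda(A\otimes\D(V))<\infty$ (since $\gldim(B)<\infty$ forces ${}_B\D(V)$ to have finite projective dimension, whose tensor with $A$ remains projective over $\Lambda$). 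Now Lemma~\ref{lemma-complete-resolution-basicProperty}(3) applies, using that $\Hom_\Lambda(\cpx{P},\Lambda)$ is exact by the very definition of a complete projective resolution.

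The main obstacle is exactly this final $\Hom_A(-,A)$-exactness step; the trick is the adjunction, which trades the right $\Lambda$-module $A\otimes V$ for the left $\Lambda$-module $A\otimes\D(V)$, and the hypothesis $\gldim(B)<\infty$ is precisely what makes the latter have finite projective dimension. It is also exactly where the argument would break without the finite-global-dimension assumption, matching the necessity side.
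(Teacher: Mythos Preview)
Your argument is correct and follows the same route as the paper. The only cosmetic difference is that the paper packages your final step as a direct application of Lemma~\ref{lemma-Gorenstein-Tor-Ext-property}(3), applied to the $A$-$\Lambda$-bimodule $N=A\otimes V$ (noting ${}_AN$ is projective, $\pd(N_\Lambda)<\infty$, and ${}_\Lambda\Hom_A(N,A)\cong A\otimes\D(V)$ has finite projective dimension), whereas you unfold that lemma's proof inline via the adjunction of Lemma~\ref{lemma-tensorModuleHom}(5) and Lemma~\ref{lemma-complete-resolution-basicProperty}(3).
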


 \begin{proof}
Suppose that  $X$ is a Gorenstein projective $\Lambda$-module.   Let $V$ be an arbitrary right $B$-module. We consider $N:=A\otimes V$. By adjointness, there  is an isomorphism $\Hom_A({}_AN, A)\cong A\otimes \D(V)$.

If $\gldim(B)<\infty$, it is easy to see that both $N_{\Lambda}$ and ${}_{\Lambda}\Hom_A({}_AN, A)$ have finite projective dimension. The module ${}_AN$ is clearly projective as a left $A$-module. By Lemma \ref{lemma-Gorenstein-Tor-Ext-property} (2) and (3), we obtain that $\Tor_i^{\Lambda}(A\otimes V, X)=0$ for all $i\ge1$, which implies that $X\in\Mon(B, A)$,  and that $(A\otimes V)\otimes_{\Lambda}X$ is a Gorenstein projective left $A$-module. Hence $X\in\Mon(B, A\Gp)$.  This proves that $\Lambda\Gp\subseteq \Mon(B, A\Gp)$. Together with Proposition \ref{proposition-monic-in-gorenstein}, we have $\Lambda\Gp=\Mon(B, A\Gp)$.

If $\Lambda\Gp=\Mon(B, A\Gp)$, then, by Lemma~\ref{lemma-cm-free-monic=gorenstein}, $B$ is CM-free. Combining that $B$ is Gorenstein, we get that $B$ has finite global dimension.
 \end{proof}

Next, we consider the case that $A$ is Gorenstein.

\begin{thm}\label{theorem-Gorenstein-CM-free}
Suppose that $A$ is Gorenstein. Then $\Lambda\Gp=\Mon(B, A\Gp)$ if and only if $B$ is CM-free.
\end{thm}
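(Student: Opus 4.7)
The ''only if'' direction is Lemma~\ref{lemma-cm-free-monic=gorenstein}, and Proposition~\ref{proposition-monic-in-gorenstein} gives $\Mon(B, A\Gp)\subseteq\Lambda\Gp$, so the plan for the ''if'' direction is to prove $\Lambda\Gp\subseteq\Mon(B, A\Gp)$ under the hypotheses that $A$ is Gorenstein with $d:=\id({}_AA)<\infty$ and $B$ is CM-free. Fix $X\in\Lambda\Gp$. First, for the $\Mon(B,A)$-condition: by Proposition~\ref{Prop-gorenstein-projective-bimodules} (2), ${}_BX\in B\Gp$; since $B$ is CM-free, ${}_BX$ is projective, and Lemma~\ref{lemma-monicRep-equiv-condition} (5) yields $X\in\Mon(B, A)$. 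The same reasoning applies to every cocycle of a complete projective resolution of $X$, so fixing such a resolution $\cpx{Q}$ of $X$ over $\Lambda$, the complex $\cpx{Q}$ is split exact as a complex of $B$-modules (each term ${}_BQ^i$ and each cocycle ${}_BZ^i$ is projective over $B$).

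What remains is to show $(A\otimes V)\otimes_\Lambda X\cong V\otimes_B X\in A\Gp$ for every right $B$-module $V$. The plan is to verify that $V\otimes_B\cpx{Q}$ is a complete projective resolution of $V\otimes_B X$ over $A$. Exactness follows from the split $B$-exactness of $\cpx{Q}$; each term lies in $\add(A\otimes V)$ and is thus a free left $A$-module; and by Lemma~\ref{lemma-tensorModuleHom} (5), $\Hom_A(V\otimes_B\cpx{Q}, A)\cong\Hom_\Lambda(\cpx{Q}, A\otimes\D(V))$, so the remaining task is to show that $\Hom_\Lambda(\cpx{Q}, A\otimes N)$ is exact for every left $B$-module $N$.

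The key step is the same statement with $A$ replaced by any injective left $A$-module $I$. I would write $I=\D(P)$ for some projective right $A$-module $P$ and check directly that there is a natural $\Lambda$-module isomorphism $\D(P)\otimes N\cong\Hom_B(P\otimes B, N)$, where $P\otimes B$ carries the evident $(B,\Lambda)$-bimodule structure. Tensor-Hom adjunction then yields a natural isomorphism $\Hom_\Lambda(Y, I\otimes N)\cong\Hom_B(P\otimes_A Y, N)$ for every left $\Lambda$-module $Y$. Because $P$ is a right $A$-summand of some $A_A^n$, $P\otimes_A\cpx{Q}$ is a $B$-direct summand of $\cpx{Q}^n$, and hence inherits the $B$-split exactness of $\cpx{Q}$; applying $\Hom_B(-, N)$ to a split exact $B$-complex gives an exact complex, which proves the key step.

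To bridge from injective left $A$-modules to ${}_AA$ itself, I plan to use the finite injective coresolution $0\to {}_AA\to I^0\to\cdots\to I^d\to 0$. Tensoring with $N$ yields an exact sequence of left $\Lambda$-modules, and a standard dimension-shifting argument along its short exact sequences gives $\Ext^m_\Lambda(Z, A\otimes N)=0$ for $m\ge d+1$ and every $Z\in\Lambda\Gp$. Since the cocycles of $\cpx{Q}$ lie in $\Lambda\Gp$ and admit cosyzygies $\Sigma^d Z\in\Lambda\Gp$ satisfying $\Ext^1_\Lambda(Z,-)\cong\Ext^{d+1}_\Lambda(\Sigma^d Z,-)$, the vanishing propagates down to $\Ext^1_\Lambda(Z, A\otimes N)=0$ for every cocycle $Z$ of $\cpx{Q}$, which is exactly the exactness of $\Hom_\Lambda(\cpx{Q}, A\otimes N)$. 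The main technical point to get right will be the explicit $\Lambda$-module identification $\D(P)\otimes N\cong\Hom_B(P\otimes B, N)$ and carefully tracking which actions on $P\otimes_A\cpx{Q}$ come from which factor through the adjunctions; once this is set up, everything else assembles previously proven results.
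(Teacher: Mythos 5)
Your proposal is correct, and the first half (the ``only if'' direction, the inclusion $\Mon(B, A\Gp)\subseteq\Lambda\Gp$, and the deduction that $X\in\Mon(B,A)$ from ${}_BX$ being projective) coincides with the paper's argument. Where you genuinely diverge is in proving $(A\otimes V)\otimes_{\Lambda}X\in A\Gp$. The paper never touches the complete resolution of $X$ again: it chains the adjunction isomorphisms $\Ext_A^i((A\otimes V)\otimes_{\Lambda}X, A)\cong\Ext_{\Lambda}^i(X, A\otimes\D(V))\cong\Ext_B^i((\D(A)\otimes B)\otimes_{\Lambda}X, \D(V))$, shows via Lemma \ref{lemma-Gorenstein-Tor-Ext-property}(3) that $(\D(A)\otimes B)\otimes_{\Lambda}X$ is Gorenstein projective, hence projective, over the CM-free algebra $B$, concludes $(A\otimes V)\otimes_{\Lambda}X\in{}^{\perp}A$, and then invokes (without proof) the standard characterization ${}^{\perp}A=A\Gp$ for Gorenstein $A$. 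You instead build the totally acyclic complex explicitly as $V\otimes_B\cpx{Q}$, using that CM-freeness makes \emph{every} cocycle of $\cpx{Q}$ projective over $B$, so that $\cpx{Q}$ is $B$-contractible; the $\Hom_A(-,A)$-exactness is then reduced, via the bimodule identification $\D(P)\otimes N\cong\Hom_B(P\otimes B,N)$ and the adjunction $\Hom_{\Lambda}(Y,I\otimes N)\cong\Hom_B(P\otimes_AY,N)$, to the exactness of $\Hom_B$ applied to a contractible complex, and finally transported from injective $A$-modules to ${}_AA$ by dimension shifting along the finite injective coresolution of ${}_AA$. Your route is longer and requires more bookkeeping of module structures, but it is self-contained: it does not presuppose ${}^{\perp}A=A\Gp$ for Gorenstein algebras, and it isolates cleanly where each hypothesis enters ($\id({}_AA)<\infty$ only in the final dimension shift, CM-freeness only in the contractibility of ${}_B\cpx{Q}$). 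The paper's route is more economical but leans on that unproved (though well-known) characterization. Both arguments are sound.
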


\begin{proof}
If $\Lambda\Gp=\Mon(B, A\Gp)$, then, by Lemma \ref{lemma-cm-free-monic=gorenstein}, $B$ is CM-free.

Now suppose that $B$ is CM-free and we will show that $\Lambda\Gp=\Mon(B, A\Gp)$.
Due to Proposition \ref{proposition-monic-in-gorenstein}, we only need to prove that $\Lambda\Gp\subseteq\Mon(B, A\Gp)$. Let ${}_{\Lambda}X$ be a Gorenstein projective $\Lambda$-module. By Proposition \ref{Prop-gorenstein-projective-bimodules}, both ${}_AX$ and ${}_BX$ are Gorenstein projective. Since $B$ is CM-free, the $B$-module ${}_BX$ is actually projective.  By Lemma \ref{lemma-monicRep-equiv-condition}, we deduce that $X\in\Mon(B, A)$.  It remains to prove that $(A\otimes V)\otimes_{\Lambda}X$ is Gorenstein projective as a left $A$-module for all right $B$-modules $V$.

Let $V$ be an arbitrary right $B$-module. Since $A$ is a Gorenstein algebra, the right $A$-module $\D({}_AA)$ has finite projective dimension, and thus $\D(A)\otimes B$ has finite projective dimension as a right $\Lambda$-module. By Lemma \ref{lemma-Gorenstein-Tor-Ext-property} (2), we have $\Tor_i^{\Lambda}(\D(A)\otimes B, X)=0$ for all $i\ge1$. Note that $\D(A)\otimes B$ is projective as a left $B$-module, that is, $\Ext_B^i(\D(A)\otimes B, \D(V))=0$ for all $i\ge1$. By Lemma \ref{lemma-basic-homological-isomorphism} (3), we have isomorphisms
$$\Ext_B^i((\D(A)\otimes B)\otimes_{\Lambda}X, \D(V))\cong\Ext_{\Lambda}^i(X, \Hom_B(\D(A)\otimes B, \D(V)))\cong \Ext_{\Lambda}^i(X, A\otimes \D(V))$$
for all $i\geq 0$. By Lemma \ref{lemma-monic-Ext-change-ring}, there is another natural isomorphism
$$\Ext_A^i((A\otimes V)\otimes_{\Lambda}X, A)\cong\Ext_{\Lambda}^i(X, A\otimes \D(V))$$
 for all $i\geq 0$. Hence there is an isomorphism
 $$\Ext_A^i((A\otimes V)\otimes_{\Lambda}X, A)\cong\Ext_B^i((\D(A)\otimes B)\otimes_{\Lambda}X, \D(V))$$
 for all $i\geq 0$.  The left $\Lambda$-module $\Hom_B(\D(A)\otimes B, B)\cong\Hom_k(\D(A), B)\cong A\otimes B\cong {}_{\Lambda}\Lambda$ is projective.  Thus the $B$-$\Lambda$-bimodule $\D(A)\otimes B$ satisfies the conditions in Lemma \ref{lemma-Gorenstein-Tor-Ext-property} (3). It follows that $(\D(A)\otimes B)\otimes_{\Lambda}X$ is Gorenstein projective as a left $B$-module. However, the algebra $B$ is CM-free. This forces  $(\D(A)\otimes B)\otimes_{\Lambda}X$ to be projective as a left $B$-module. Hence $\Ext_B^i((\D(A)\otimes B)\otimes_{\Lambda}X, \D(V))=0$ for all $i\ge1$. Consequently, $\Ext_A^i((A\otimes V)\otimes_{\Lambda}X, A)=0$ for all $i\ge1$. That is $(A\otimes V)\otimes_{\Lambda}X\in {}^{\perp}A$. Since $A$ is a Gorenstein algebra, we deduce that $(A\otimes V)\otimes_{\Lambda}X$ is Gorenstein projective as an left $A$-module.

Altogether, we have proved that $X\in\Mon(B, A)$ and $(A\otimes V)\otimes_{\Lambda}X$ is Gorenstein projective as a left $A$-module for all right $B$-modules $V$. Hence $X\in\Mon(B, A\Gp)$. This finishes the proof.
\end{proof}

\begin{rem} $(1).$ Theorem~\ref{theorem-gldimB-finite-monic=gorenstein} and~\ref{theorem-Gorenstein-CM-free} show that the answer to Question~\ref{Ques-Mon}  is negative in general.

$(2).$ Note that $\projcat{\Lambda}\subseteq\Mon(B, A\Gp)\subseteq\Lambda\Gp$ and both $\Mon(B, A\Gp)$ and $\Lambda\Gp$ are Frobenius exact categories, we get that the stable category $\stMon(B, A\Gp)$ is a thick triangulated subcategory of the stable category $\Lambda\stGp$. Thus, there is a natural question: How to describe the Verdier quotient $\Lambda\stGp/\stMon(B, A\Gp)$?
Theorem~\ref{theorem-gldimB-finite-monic=gorenstein} and~\ref{theorem-Gorenstein-CM-free}  give  some partial answers to this question.
\end{rem}

Now, we get the following sequence of full subcategories of $\modcat{\Lambda}$,
$$\projcat{\Lambda}\subseteq\Mon(B, \projcat{A})\subseteq\Mon(B, A\Gp)\subseteq\Lambda\Gp.$$
It is clear that if $\Lambda$ is CM-free, namely, $\Lambda\Gp=\projcat{\Lambda}$, then $\Mon(B, \projcat{A})=\Mon(B, A\Gp)$ and then $A\Gp=\projcat{A}$. Thus we get the following corollary,

\begin{cor}\label{corollary-cmfree-tensor-product}
Let $A$ and $B$ be two finite dimensional $k$-algebras. If $\Lambda=A\otimes B$ is CM-free, then both $A$ and $B$ are CM-free.
\end{cor}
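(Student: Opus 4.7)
The plan is to follow the hint given in the paragraph immediately preceding the corollary: exploit the chain of full subcategories
$$\projcat{\Lambda}\subseteq\Mon(B, \projcat{A})\subseteq\Mon(B, A\Gp)\subseteq\Lambda\Gp,$$
which under the hypothesis $\Lambda\Gp = \projcat{\Lambda}$ collapses to a single category. In particular this forces $\Mon(B, \projcat{A}) = \Mon(B, A\Gp)$, and I will squeeze the conclusion $A\Gp = \projcat{A}$ out of this equality.

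To do so, I would pick an arbitrary $X \in A\Gp$ and pass to the $\Lambda$-module $X \otimes B$. Proposition~\ref{proposition-mon(LB)} (2.b), applied with $\mathscr{L} = A\Gp$, $L = X$ and $U = {}_BB$ (which is projective), immediately places $X \otimes B$ in $\Mon(B, A\Gp)$; by the collapse of the chain, $X \otimes B$ lies in $\Mon(B, \projcat{A})$. Proposition~\ref{proposition-mon(LB)} (2.a) then tells me that ${}_A(X \otimes B) \in \projcat{A}$. Since as an $A$-module $X \otimes B$ is a finite direct sum of copies of $X$ and $\projcat{A}$ is closed under direct summands, this yields $X \in \projcat{A}$, so $A\Gp \subseteq \projcat{A}$ and hence $A$ is CM-free.

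For the statement $B\Gp = \projcat{B}$, the cleanest route is to observe that the whole setup is symmetric in $A$ and $B$: via the identification $A \otimes B \cong B \otimes A$ one obtains the analogous chain
$$\projcat{\Lambda}\subseteq\Mon(A, \projcat{B})\subseteq\Mon(A, B\Gp)\subseteq\Lambda\Gp,$$
and repeating the previous paragraph verbatim with the roles of $A$ and $B$ exchanged, applied to $Y \otimes A$ for $Y \in B\Gp$, delivers $Y \in \projcat{B}$.

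I do not anticipate any genuine obstacle; the only bookkeeping item is to verify the hypotheses of Proposition~\ref{proposition-mon(LB)} (2.a)--(2.b), namely that both $A\Gp$ and $\projcat{A}$ (and symmetrically for $B$) are additive subcategories closed under extensions and under direct summands. Both properties are standard for these two classes. An equally short alternative route, worth mentioning, bypasses the monomorphism category altogether: by Proposition~\ref{Prop-gorenstein-projective-bimodules}~(1) the module $X \otimes B$ is in $\Lambda\Gp = \projcat{\Lambda}$, and since ${}_A\Lambda$ is free, ${}_A(X\otimes B)$ is projective, so $X$ is as well. But I prefer the monomorphism-category version because it makes transparent how the layered structure of $\Lambda\Gp$ sketched in this section forces the CM-freeness of the factors.
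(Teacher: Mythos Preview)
Your proof is correct and follows exactly the approach sketched in the paper: the paragraph immediately preceding the corollary gives precisely this chain-collapse argument, and you have simply filled in the details (the application of Proposition~\ref{proposition-mon(LB)} (2.a)--(2.b) and the symmetry in $A$ and $B$) that the paper leaves implicit. Your alternative route via Proposition~\ref{Prop-gorenstein-projective-bimodules}~(1) is also valid and indeed shorter, but the monomorphism-category argument is the one the paper intends.
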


Conversely, we have the following partial answer.

\begin{cor}\label{corollary-cmfree-tensor-product-equivalent-condition}
Let $A$ and $B$ be two finite dimensional $k$-algebras. Assume that $k$ is a splitting field for $A$ or $B$. If $A$ or $B$ is Gorenstein, then $\Lambda=A\otimes B$ is CM-free if and only if both $A$ and $B$ are CM-free.
\end{cor}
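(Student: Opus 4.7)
The plan is to prove the nontrivial direction (the converse) by directly assembling three ingredients already established in the excerpt, namely Theorems~\ref{theorem-gldimB-finite-monic=gorenstein} and \ref{theorem-Gorenstein-CM-free} together with Lemma~\ref{lemma-projective-bimodule-monocat}. The forward direction, that CM-freeness of $\Lambda$ forces CM-freeness of both $A$ and $B$, is exactly Corollary~\ref{corollary-cmfree-tensor-product} and does not require any extra assumption, so it can be disposed of in one sentence.

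For the converse, assume $A$ and $B$ are both CM-free and one of them is Gorenstein. The strategy is to collapse the chain
$$\projcat{\Lambda}\subseteq\Mon(B, \projcat{A})\subseteq\Mon(B, A\Gp)\subseteq\Lambda\Gp$$
displayed just before the corollary into a chain of equalities. Suppose first that $A$ is Gorenstein. Since $B$ is CM-free, Theorem~\ref{theorem-Gorenstein-CM-free} yields $\Lambda\Gp=\Mon(B, A\Gp)$. The CM-freeness of $A$ says $A\Gp=\projcat{A}$, so $\Mon(B, A\Gp)=\Mon(B, \projcat{A})$. Finally, since $k$ is a splitting field for $A$ or $B$, Lemma~\ref{lemma-projective-bimodule-monocat} gives $\Mon(B, \projcat{A})=\projcat{\Lambda}$. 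Chaining these equalities yields $\Lambda\Gp=\projcat{\Lambda}$, i.e., $\Lambda$ is CM-free.

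For the remaining case, assume $B$ is Gorenstein. A CM-free Gorenstein algebra has finite global dimension (every module has a Gorenstein projective approximation, which under CM-freeness is genuinely projective, bounded in length by $\id({}_BB)$), so $\gldim(B)<\infty$. Hence Theorem~\ref{theorem-gldimB-finite-monic=gorenstein} applies and gives $\Lambda\Gp=\Mon(B, A\Gp)$, after which the argument proceeds exactly as above via CM-freeness of $A$ and Lemma~\ref{lemma-projective-bimodule-monocat}.

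There is no real obstacle; the proof is essentially a one-line citation of earlier results. The only minor point to be careful about is the standard fact that a CM-free Gorenstein algebra automatically has finite global dimension, which is what allows Theorem~\ref{theorem-gldimB-finite-monic=gorenstein} to kick in when $B$ plays the role of the Gorenstein algebra.
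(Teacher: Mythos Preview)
Your proof is correct and follows essentially the same route as the paper: both split into the cases $A$ Gorenstein (invoking Theorem~\ref{theorem-Gorenstein-CM-free}) and $B$ Gorenstein (invoking Theorem~\ref{theorem-gldimB-finite-monic=gorenstein} after observing that CM-free plus Gorenstein gives finite global dimension), and then collapse the chain using $A\Gp=\projcat{A}$ together with Lemma~\ref{lemma-projective-bimodule-monocat}. The only difference is cosmetic: the paper states Lemma~\ref{lemma-projective-bimodule-monocat} up front and leaves the forward direction implicit via Corollary~\ref{corollary-cmfree-tensor-product}, exactly as you do.
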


\begin{proof}
At first, by Lemma \ref{lemma-projective-bimodule-monocat}, $\projcat{\Lambda}=\Mon(B, \projcat{A})$.
Assume that both $A$ and $B$ are CM-free. If $A$ is Gorenstein, then by Theorem \ref{theorem-Gorenstein-CM-free}, we get $\Lambda\Gp=\Mon(B, A\Gp)=\Mon(B, \projcat{A})=\projcat{\Lambda}$. Thus $\Lambda$ is CM-free. If $B$ is Gorenstein, then $B$ has finite global dimension, and then by Theorem \ref{theorem-gldimB-finite-monic=gorenstein}, we get $\Lambda\Gp=\Mon(B, A\Gp)=\Mon(B, \projcat{A})=\projcat{\Lambda}$. Thus $\Lambda$ is CM-free.
\end{proof}

We propose the following conjecture:

\medskip
{\bf\noindent Conjecture}: {\it Let $A$ and $B$ be two finite dimensional $k$-algebras. Then $\Lambda=A\otimes B$ is CM-free if and only if both $A$ and $B$ are CM-free.}

\medskip

Theorem \ref{theorem-gldimB-finite-monic=gorenstein} and \ref{theorem-Gorenstein-CM-free} also give a convenient way to describe Gorenstein projective bimodules. In the following example, one can write down all the indecomposable Gorenstein projective modules explicitly via  monic representations.

\begin{exm}\label{example-monomorphism-category}
Let $A$ be $k[x]/(x^2)$, and let $B$ be the algebra given by the quiver $\xymatrix@ru@R=4mm@C=4mm{{\bullet}\ar[r]^{\alpha}\ar[d]_{\beta} &  {\bullet}\ar[d]^{\gamma} \\  {\bullet}\ar[r]_{\delta} & {\bullet} }$ with relation $\gamma\alpha-\delta\beta$. The algebra $B$ is a tilted algebra and has finite global dimension. By Theorem \ref{theorem-gldimB-finite-monic=gorenstein}, the Gorenstein projective $(A\otimes B)$-modules are precisely those modules in $\Mon(B, A\Gp)$. Note that, by a result of Rickard \cite{Rickard1991},  $A\otimes B$ is derived equivalent to $A\otimes kQ$, where $Q$ is a Dynkin quiver of type $D_4$. Ringel and Zhang \cite{Ringel2011-preprint} have proved that the number of indecomposable non-projective Gorenstein projective modules over $A\otimes kQ$ is the same as the number of indecomposable $kQ$-modules, which is $12$ when $Q$ is of type $D_4$. It is also well-known that derived equivalences preserve stable categories of Gorenstein projective modules (see, for example, \cite{Hu2016}). Thus the algebra $A\otimes B$ also has 12 indecomposable non-projective Gorenstein projective modules. By using  monic representations, it is very easy to write down all these modules. Actually, a module $X$ over $A\otimes B$ is a monic representation of $B$ over $A$ if and only if both $X_{\alpha}: X_1\lra X_2$ and $X_{\beta}: X_1\lra X_3$ are monomorphisms and the sequence of $A$-modules
$$\xymatrix@C=12mm{0\ar[r] & X_1\ar[r]^-{[X_{\alpha},\; X_{\beta}]^{T}} & X_2\oplus X_3\ar[r]^-{[-X_{\gamma},\; X_{\delta}]} &X_4}$$ is exact.
The simple $A$-module is denoted by $S$, and $\lambda: S\lra A$ and $\pi: A\lra S$ are the canonical inclusion and surjective $A$-maps respectively. The indecomposable non-projective Gorenstein projective $(A\otimes B)$-modules are as follows.
 $$
 \xy
(0,0)*+{0}="1",
(15,0)*+{0}="2",
(0,-12)*+{0}="3",
(15,-12)*+{S}="4",
{\ar "1";"2"},
{\ar "1";"3"},
{\ar "3";"4"},
{\ar   "2";"4"},
\endxy \quad \quad
 \xy
(0,0)*+{0}="1",
(15,0)*+{S}="2",
(0,-12)*+{0}="3",
(15,-12)*+{S}="4",
{\ar "1";"2"},
{\ar "1";"3"},
{\ar "3";"4"},
{\ar^{1} "2";"4"},
\endxy \quad \quad
 \xy
(0,0)*+{0}="1",
(15,0)*+{0}="2",
(0,-12)*+{S}="3",
(15,-12)*+{S}="4",
{\ar "1";"2"},
{\ar "1";"3"},
{\ar "2";"4"},
{\ar^{1} "3";"4"},
\endxy \quad \quad
 \xy
(0,0)*+{0}="1",
(15,0)*+{S}="2",
(0,-12)*+{0}="3",
(15,-12)*+{A}="4",
{\ar "1";"2"},
{\ar "1";"3"},
{\ar "3";"4"},
{\ar^{\lambda} "2";"4"},
\endxy \quad \quad
 \xy
(0,0)*+{0}="1",
(15,0)*+{0}="2",
(0,-12)*+{S}="3",
(15,-12)*+{A}="4",
{\ar "1";"2"},
{\ar "1";"3"},
{\ar "2";"4"},
{\ar^{\lambda} "3";"4"},
\endxy \quad \quad
\xy
(0,0)*+{0}="1",
(15,0)*+{S}="2",
(0,-12)*+{S}="3",
(15,-12)*+{A\oplus S}="4",
{\ar "1";"2"},
{\ar "1";"3"},
{\ar^{[\lambda, 1]^T} "2";"4"},
{\ar^(.45){[\lambda, 0]^T} "3";"4"},
\endxy $$
$$\xy
(0,0)*+{S}="1",
(15,0)*+{S}="2",
(0,-12)*+{S}="3",
(15,-12)*+{S}="4",
{\ar^{1} "1";"2"},
{\ar^{1} "1";"3"},
{\ar^{1} "2";"4"},
{\ar^{1} "3";"4"},
\endxy \quad  \xy
(0,0)*+{S}="1",
(0,-12)*+{S}="3",
(15,0)*+{A}="2",
(15,-12)*+{A}="4",
{\ar^{1} "1";"3"},
{\ar^{\lambda} "1";"2"},
{\ar^{\lambda} "3";"4"},
{\ar^{1} "2";"4"},
\endxy \quad
 \xy
(0,0)*+{S}="1",
(15,0)*+{S}="2",
(0,-12)*+{S}="3",
(15,-12)*+{A}="4",
{\ar^{1} "1";"2"},
{\ar_{1} "1";"3"},
{\ar^{\lambda} "2";"4"},
{\ar^{\lambda} "3";"4"},
\endxy \quad \xy
(0,0)*+{S}="1",
(15,0)*+{S}="2",
(0,-12)*+{A}="3",
(15,-12)*+{A}="4",
{\ar^{1} "1";"2"},
{\ar_{\lambda} "1";"3"},
{\ar^{\lambda} "2";"4"},
{\ar^{1} "3";"4"},
\endxy \quad
 \xy
(0,0)*+{S}="1",
(15,0)*+{A}="2",
(0,-12)*+{A}="3",
(15,-12)*+{A\oplus S}="4",
{\ar^{\lambda} "1";"2"},
{\ar_{\lambda} "1";"3"},
{\ar^{[1, \pi]^{T}} "2";"4"},
{\ar^(.45){[1, 0]^{T}} "3";"4"},
\endxy \quad
 \xy
(0,0)*+{S}="1",
(15,0)*+{A}="2",
(0,-12)*+{A}="3",
(15,-12)*+{A\oplus A}="4",
{\ar^{\lambda} "1";"2"},
{\ar_{\lambda} "1";"3"},
{\ar^{[1, \lambda\pi]^{T}} "2";"4"},
{\ar^(.45){[1, 0]^{T}} "3";"4"},
\endxy$$
\end{exm}

At the end of this section, we give an application of Proposition~\ref{proposition-monic-in-gorenstein}.

\begin{exm}\label{example-application-to-morita-context}
  Let $A$ be a finite dimensional algebra, and let $\Lambda:=\left[\begin{smallmatrix} A & A \\ A & A\end{smallmatrix}\right]$ be the Morita context whose multiplication is given by
  $$\begin{bmatrix} a & b \\ c & d\end{bmatrix}\cdot \begin{bmatrix} a' & b' \\ c' & d'\end{bmatrix}=\begin{bmatrix} aa' & ab'+bd' \\ ca'+dc' & dd'\end{bmatrix}.$$
Actually, the algebra $\Lambda$ can be viewed as a tensor product of $A$ and the algebra $B$ given by the quiver  $Q: \xymatrix{1\ar@<1mm>[r]^{\alpha} & 2\ar@<1mm>[l]^{\beta}}$ with relations $\alpha\beta=0=\beta\alpha$. Thus every $\Lambda$-module $X$ can be viewed as a representation of $Q$ over $A\modcat$, namely, $\xymatrix{X_1\ar@<1mm>[r]^{X_{\alpha}} & X_2\ar@<1mm>[l]^{X_{\beta}}}$ with $X_{\beta}X_{\alpha}=0=X_{\alpha}X_{\beta}$, where $X_1$ and $X_2$ are $A$-modules, $X_{\alpha}$ and $X_{\beta}$ are $A$-maps. By the definition of  monic representations of $B$ over $A$, it is easy to see that $X$ is in $\Mon(B, A)$ if and only if the sequence of $A$-modules $\xymatrix@1{X_1\ar[r]^{X_{\alpha}} & X_2\ar[r]^{X_{\beta}} & X_1\ar[r]^{X_{\alpha}} & X_2}$  is exact. The module $X$ belongs to $\Mon(B, A\Gp)$ if and only if $X$ further satisfies the condition that both $X_{\alpha}$ and $X_{\beta}$ have kernels in $A\Gp$. Thus Proposition \ref{proposition-monic-in-gorenstein}  gives another proof of the result \cite[Corollary 3.12]{GaoPsaroudakis2015}.
\end{exm}

\section{Gorenstein projective bimodules via filtration categories}

\subsection{Finitely generated Gorenstein projective modules}

To study the Gorenstein projective modules over tensor products, another strategy  is to describe  $(A\otimes B)\Gp$ in terms of $A\Gp$ and $B\Gp$.  As before, we write $\Lambda$ for $A\otimes B$ throughout this subsection. At first,  for arbitrary $X\in A\Gp$ and $Y\in B\Gp$, we have $X\otimes Y\in \Lambda\Gp$ by Proposition \ref{Prop-gorenstein-projective-bimodules}. However, in general, Gorenstein projective $\Lambda$-modules may not be of this form.  For instance, if both $A$ and $B$ are selfinjective, then so is $\Lambda$. In this case $\Lambda\Gp=\modcat{\Lambda}$. In general, there are $\Lambda$-modules which are not tensor products of $A$-modules and $B$-modules.

\medskip
 Let $\Gamma$ be an algebra, and let $\mathscr{X}$ be a class of $\Gamma$-modules.  We denote by $\filt(\mathscr{X})$ the full subcategory of $\modcat{\Gamma}$ consisting of module $X$ admitting a filtration  $0=X_0\subset X_1\subset\cdots\subset X_m=X$ of $\Gamma$-modules such that the factors $X_i/X_{i-1}$ are all in $\mathscr{X}$ for all $1\leq i\leq m$. By $\addfilt(\mathscr{X})$ we denote the additive closure of $\filt(\mathscr{X})$. Precisely speaking, $\addfilt(\mathscr{X})$ consists of modules which are direct summands of modules in $\filt(\mathscr{X})$. Actually, the category $\filt(\mathscr{X})$ is the smallest full subcategory of $\modcat{\Gamma}$ containing $\mathscr{X}$ closed under extensions, and $\addfilt(\mathscr{X})$ is the smallest full subcategory of $\modcat{\Gamma}$ containing $\mathscr{X}$ closed under extensions and direct summands.

\medskip
For simplicity, given $\mathscr{X}\subseteq \modcat{A}$ and $\mathscr{Y}\subseteq \modcat{B}$, we write
 $$\mathscr{X}\otimes\mathscr{Y}:=\{X\otimes Y\in\modcat{\Gamma}\ |  \ X\in\mathscr{X}, Y\in\mathscr{Y}\}.$$
We already know that $A\Gp\otimes B\Gp$ is contained in $\Lambda\Gp$.  Since $\Lambda\Gp$ is closed under extensions and direct summands, it must contain $\addfilt(A\Gp\otimes B\Gp)$ as a full subcategory.  The naive question here is:

\smallskip
\begin{ques}\label{Ques-filt}
Does $\Lambda\Gp$ coincide with $\addfilt(A\Gp\otimes B\Gp)$?
\end{ques}

\smallskip
\noindent
If the above question has a positive answer, then we get a satisfactory description of Gorenstein projective modules over tensor product algebras. Our first answer to this question is the following result.

\begin{thm}\label{theorem-Gorenstein-tensor-Gorenstein}
Let  $A$ and $B$ be  Gorenstein algebras. Assume that $k$ is a splitting field for   $A$ or $B$.  Then
$$(A\otimes B)\Gp=\addfilt(A\Gp\otimes B\Gp).$$
\end{thm}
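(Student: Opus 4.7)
The inclusion $\addfilt(A\Gp\otimes B\Gp)\subseteq(A\otimes B)\Gp$ is immediate: by Proposition~\ref{Prop-gorenstein-projective-bimodules}(1) every $X\otimes Y$ with $X\in A\Gp$, $Y\in B\Gp$ already lies in $\Lambda\Gp$, and $\Lambda\Gp$ is closed under extensions and direct summands.

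For the reverse inclusion, I would assume without loss of generality that $k$ is a splitting field for $B$ (the other case being symmetric). Given $G\in\Lambda\Gp$, Proposition~\ref{Prop-gorenstein-projective-bimodules}(2) yields both ${}_AG\in A\Gp$ and ${}_BG\in B\Gp$. The plan is to filter $G$ using the nilpotent ideal $J:=A\otimes\rad B\subseteq\Lambda$. Its powers give a finite descending chain
$$G\supseteq JG\supseteq J^2G\supseteq\cdots\supseteq J^NG=0,$$
whose subfactors are modules over $\Lambda/J\cong A\otimes B/\rad B$. Under the splitting field hypothesis, Wedderburn gives $B/\rad B\cong\prod_i M_{n_i}(k)$, and Morita theory identifies $\Lambda/J$ with (a finite product of) copies of $A$. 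Hence each subfactor decomposes canonically as
$$J^jG/J^{j+1}G\cong\bigoplus_i X_i^{(j)}\otimes S_i,$$
for $A$-modules $X_i^{(j)}$ (read off via the Peirce decomposition associated to a lift to $B$ of the primitive idempotents of $B/\rad B$) and simple $B$-modules $S_i$.

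Two issues must be resolved: (a) each $X_i^{(j)}$ must lie in $A\Gp$, and (b) the summand $X_i^{(j)}\otimes S_i$ must sit in $\addfilt(A\Gp\otimes B\Gp)$, which is nontrivial since $S_i$ need not itself be in $B\Gp$ when $\gldim B=\infty$. For (a), using Lemma~\ref{lemma-tensorModuleHom}(4) one identifies $(A\otimes e_iB)\otimes_\Lambda G$ with the $A$-summand $e_iG$ of ${}_AG\in A\Gp$; combining this with Lemma~\ref{lemma-Gorenstein-Tor-Ext-property} applied to suitable $A$-$B$-bimodules built from idempotents of $B$ (whose relevant projective dimensions are finite because $A$ is Gorenstein) propagates Gorenstein projectivity to each $X_i^{(j)}$. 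For (b), since $B$ is Gorenstein, Auslander-Buchweitz theory provides for each simple $S_i$ a short exact sequence $0\to L_i\to K_i\to S_i\to 0$ with $K_i\in B\Gp$ and $\pd_B(L_i)<\infty$; tensoring over $k$ with $X_i^{(j)}$ replaces $X_i^{(j)}\otimes S_i$ by the Gorenstein projective module $X_i^{(j)}\otimes K_i\in A\Gp\otimes B\Gp$ at the cost of an error term $X_i^{(j)}\otimes L_i$ of finite $\Lambda$-projective dimension.

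The most delicate step, and the main obstacle, will be to glue these local Cohen--Macaulay approximations along the $J$-adic filtration into a single refined filtration of $G$ whose subfactors lie in $A\Gp\otimes B\Gp$. The finite-projective-dimension error terms admit finite resolutions by modules in $\projcat\Lambda\subseteq A\Gp\otimes B\Gp$; assembling the global filtration via repeated pullbacks and the horseshoe lemma inside the Frobenius category $\Lambda\Gp$ should realize $G$ as a direct summand of an object in $\filt(A\Gp\otimes B\Gp)$, completing the proof.
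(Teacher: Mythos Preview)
The argument has a genuine gap at step (a). To deduce $X_i^{(j)}\in A\Gp$ from $G\in\Lambda\Gp$ via Lemma~\ref{lemma-Gorenstein-Tor-Ext-property}(3), you would need the right $\Lambda$-modules $A\otimes(\rad^jB/\rad^{j+1}B)$ to have finite projective dimension, which amounts to $\pd_B(\rad^jB/\rad^{j+1}B)<\infty$; this requires $\gldim B<\infty$, not merely $B$ Gorenstein. The Gorenstein hypothesis on $A$ controls bimodules of the shape $\D(A)\otimes(-)$, not $A\otimes(-)$, so it does not help here. In fact (a) is false in general. Let $A$ be the path algebra of the Kronecker quiver (hereditary, so $A\Gp=\projcat A$) and $B=k[x]/(x^2)$ (self-injective). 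Write $\Lambda$-modules as representations $G_1\rightrightarrows G_2$ in $\modcat B$, and take $G_1=S$ (the simple $B$-module), $G_2=B\oplus S$, with the two structure maps sending $1\mapsto(x,0)$ and $1\mapsto(0,1)$. Then $G$ embeds into the projective $\Lambda$-module $Ae_1\otimes B$, hence $G\in\Lambda\Gp$ since $\Lambda$ is $1$-Gorenstein. But with $J=A\otimes xB$ the top layer $G/JG$ is, as an $A$-module, the Kronecker representation $k\rightrightarrows k^2$ with maps $0$ and $\left(\begin{smallmatrix}0\\1\end{smallmatrix}\right)$, which is not projective; so $X^{(0)}\notin A\Gp$. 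Since step (b) needs $X_i^{(j)}\in A\Gp$ to place $X_i^{(j)}\otimes K_i$ inside $A\Gp\otimes B\Gp$, the approximation scheme cannot proceed. (Incidentally this $G$ \emph{does} lie in $\filt(A\Gp\otimes B\Gp)$ via the submodule $Ae_1\otimes S$; the point is only that the $J$-adic filtration is the wrong one.)

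The paper's proof runs in the opposite direction and avoids this obstruction entirely. Rather than filtering a given $G\in\Lambda\Gp$, one starts from the fact that every $\Lambda$-module is filtered by tensors $T\otimes S$ of simples (this is where the splitting-field hypothesis enters), and uses that for a $d$-Gorenstein algebra $\Gamma$ one has $\Gamma\Gp=\Omega_\Gamma^{d}$ up to summands. The key Lemma~\ref{lemma-OmegaXY-filt}, a direct horseshoe computation, shows $\Omega_\Lambda^{d_A+d_B}(X\otimes Y)\in\addfilt(\Omega_A^{d_A}\otimes\Omega_B^{d_B})=\addfilt(A\Gp\otimes B\Gp)$; applying $\Omega_\Lambda^{d_A+d_B}$ to all of $\modcat\Lambda$ therefore already lands in the target category.
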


Before giving the proof, we fix some notation. For each algebra $\Gamma$ and $m\geq 0$, we write
$$\Omega_{\Gamma}^m:=\addfilt\big(\{\Omega_{\Gamma}^m(X)|X\in\modcat{\Gamma}\}\cup  \projcat{\Gamma}\big),$$
where for any $X\in\modcat{\Gamma}$, $\Omega^0_{\Gamma}(X)=X$, $\Omega_{\Gamma}(X)$ is the kernel of the projective cover $f: {}_{\Gamma}P\to {}_{\Gamma}X$, and $\Omega^{i+1}_{\Gamma}(X)=\Omega_{\Gamma}(\Omega^i_{\Gamma}(X))$ for all $i\ge1$. Clearly $\Omega_{\Gamma}^0=\modcat{\Gamma}$ and $\Omega_{\Gamma}^{m+1}\subseteq\Omega_{\Gamma}^m$. Moreover,  it is easy to see that, if $\Gamma$ is Gorenstein with $\id {}_{\Gamma}\Gamma=d<\infty$, then $\Omega_{\Gamma}^d=\Gamma\Gp$.  The following lemma is useful for the proof  of Theorem \ref{theorem-Gorenstein-tensor-Gorenstein}.

\begin{lem}\label{lemma-OmegaXY-filt}
Let $A$ and $B$ be two algebras, and let $d_A, d_B$ be non-negative integers.   Suppose that $X\in\modcat{A}$ and $Y\in\modcat{B}$. Then $\Omega_{A\otimes B}^{d_A+d_B}(X\otimes Y)\in\addfilt(\Omega_A^{d_A}\otimes\Omega_B^{d_B})$.
\end{lem}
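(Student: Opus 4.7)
The plan is to induct on $n := d_A + d_B$. When $n = 0$ the assertion is trivial, since $X \otimes Y \in \modcat{A} \otimes \modcat{B} = \Omega_A^0 \otimes \Omega_B^0 \subseteq \addfilt(\Omega_A^0 \otimes \Omega_B^0)$. For the inductive step, by the symmetric roles of $A$ and $B$ one may assume $d_A \geq 1$. Applying the inductive hypothesis with parameters $(d_A - 1, d_B)$ gives $\Omega_\Lambda^{n-1}(X \otimes Y) \in \addfilt(\Omega_A^{d_A - 1} \otimes \Omega_B^{d_B})$, where $\Lambda = A\otimes B$. Hence it suffices to prove the following \emph{shift claim}: for all $a, b \geq 0$, the operator $\Omega_\Lambda$ sends $\addfilt(\Omega_A^a \otimes \Omega_B^b)$ into $\addfilt(\Omega_A^{a+1} \otimes \Omega_B^b)$.

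To reduce the shift claim to a single pure tensor, I would first observe that every projective $\Lambda$-module lies in $\addfilt(\Omega_A^{a+1} \otimes \Omega_B^b)$, because it is a direct summand of a finite direct sum of copies of $A \otimes B$, and both $A$ and $B$ are projective, hence syzygies of all orders. The horseshoe lemma, combined with Schanuel's lemma to absorb the ambiguity in $\Omega_\Lambda$ by projective summands, then propagates the desired membership through short exact sequences; together with the summand-closure built into $\addfilt$, this reduces the problem to proving $\Omega_\Lambda(X' \otimes Y') \in \addfilt(\Omega_A^{a+1} \otimes \Omega_B^b)$ for arbitrary $X' \in \Omega_A^a$ and $Y' \in \Omega_B^b$.

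For this pure-tensor case, take projective covers $P_{X'} \twoheadrightarrow X'$ and $P_{Y'} \twoheadrightarrow Y'$ with kernels $\Omega_A(X')$ and $\Omega_B(Y')$ respectively. By Lemma \ref{Lem-modules-over-tensor-product}(1), $P_{X'} \otimes P_{Y'}$ is a projective $\Lambda$-module, and the composite $P_{X'} \otimes P_{Y'} \twoheadrightarrow P_{X'} \otimes Y' \twoheadrightarrow X' \otimes Y'$ is surjective. Its kernel $K$ fits, using that $P_{X'}$ is $k$-flat, into a short exact sequence
$$0 \lra P_{X'} \otimes \Omega_B(Y') \lra K \lra \Omega_A(X') \otimes Y' \lra 0.$$
The right-hand term lies in $\Omega_A^{a+1} \otimes \Omega_B^b$, while the left-hand term lies in $\Omega_A^{a+1} \otimes \Omega_B^{b+1} \subseteq \Omega_A^{a+1} \otimes \Omega_B^b$ (using $P_{X'} \in \Omega_A^{a+1}$ and $\Omega_B^{b+1} \subseteq \Omega_B^b$). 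Thus $K \in \filt(\Omega_A^{a+1} \otimes \Omega_B^b)$, and Schanuel's lemma exhibits $\Omega_\Lambda(X' \otimes Y')$ as a direct summand of $K \oplus Q$ for some projective $\Lambda$-module $Q$, placing it in $\addfilt(\Omega_A^{a+1} \otimes \Omega_B^b)$.

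The main obstacle is not any single step but the bookkeeping: one must verify carefully that the target $\addfilt(\Omega_A^{a+1} \otimes \Omega_B^b)$ contains all projective $\Lambda$-modules, that $\Omega_\Lambda$ respects both extensions and direct summands up to projectives, and that the nested inclusions $\Omega_A^{a+1} \subseteq \Omega_A^a$ and $\Omega_B^{b+1} \subseteq \Omega_B^b$ legitimately absorb the ``extra'' syzygy in the left factor $P_{X'} \otimes \Omega_B(Y')$ of $K$ into the target class. With these in place, the induction goes through cleanly.
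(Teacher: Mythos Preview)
Your proposal is correct and follows essentially the same route as the paper: induction on $d_A+d_B$, reduction to showing that $\Omega_{\Lambda}$ carries $\addfilt(\Omega_A^{d_A-1}\otimes\Omega_B^{d_B})$ into $\addfilt(\Omega_A^{d_A}\otimes\Omega_B^{d_B})$, and for a pure tensor $U\otimes V$ the same two-step filtration of the kernel of $P_U\otimes P_V\twoheadrightarrow U\otimes V$ by $P_U\otimes\Omega_B(V)$ and $\Omega_A(U)\otimes V$. You are somewhat more explicit than the paper about the bookkeeping (Schanuel's lemma, projectives lying in the target class), but there is no substantive difference in strategy.
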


\begin{proof}
We use induction of the sum $d_A+d_B$.  The case that $d_A+d_B=0$ is clear.  Now assume that $d_A+d_B>0$. Without loss of generality, we assume that $d_A>0$. Then by induction hypothesis, we have $\Omega_{A\otimes B}^{d_A-1+d_B}(X\otimes Y)\in\addfilt(\Omega_A^{d_A-1}\otimes\Omega_B^{d_B})$. Now for arbitrary $U\in\Omega_A^{d_A-1}$ and $V\in\Omega_B^{d_B}$. Let $0\ra \Omega_A(U)\ra P\ra U\ra 0$ and $0\ra \Omega_B(V)\ra Q\ra V\ra 0$ be short exact sequences such that $P$ is a projective cover of ${}_AU$ and $Q$ is a projective cover of ${}_BV$.  Then we can form the following commutative diagram with exact rows and columns.
$$\xymatrix{
& 0\ar[d] & 0\ar[d]\\
& P\otimes \Omega_B(V)\ar@{=}[r]\ar[d] & P\otimes \Omega_B(V)\ar[d]\\
0 \ar[r] & L\ar[r]\ar[d] & P\otimes Q\ar[r]\ar[d] & U\otimes V\ar@{=}[d]\ar[r] & 0\\
0\ar[r] & \Omega_A(U)\otimes V\ar[r]\ar[d] & P\otimes V\ar[r]\ar[d] & U\otimes V\ar[r] & 0\\
& 0& 0
}$$
Since $U\in\Omega_A^{d_A-1}$, by the Horseshoe lemma, we have $\Omega_A(U)\in\Omega_A^{d_A}$. Recall that $V\in\Omega_B^{d_B}$. Then $\Omega_B(V)\in\Omega_B^{d_B+1}\subseteq\Omega_B^{d_B}$.
 It follows that both $\Omega_A(U)\otimes V$ and $P\otimes \Omega_B(V)$ lie in $\Omega_A^{d_A}\otimes \Omega_B^{d_B}$.  Hence $\Omega_{A\otimes B}(U\otimes V)$, which is a direct summand of $L$, belongs to $\Omega_A^{d_A}\otimes \Omega_B^{d_B}$. Hence $\Omega_{A\otimes B}\big(\addfilt(\Omega_A^{d_A-1}\otimes\Omega_B^{d_B})\big)\subseteq\addfilt(\Omega_A^{d_A}\otimes \Omega_B^{d_B})$ by the Horseshoe lemma.  Thus we conclude that $$\Omega_{A\otimes B}^{d_A+d_B}(X\otimes Y)=\Omega_{A\otimes B}(\Omega_{A\otimes B}^{d_A-1+d_B}(X\otimes Y))\in\addfilt(\Omega_A^{d_A}\otimes \Omega_B^{d_B}).$$
This finishes the proof.
\end{proof}

Now we can give a proof of Theorem \ref{theorem-Gorenstein-tensor-Gorenstein}.

\begin{proof}[{\bf Proof of Theorem \ref{theorem-Gorenstein-tensor-Gorenstein}}]
Suppose that $\id({}_AA)=d_A$ and $\id({}_BB)=d_B$. Let $\Lambda:=A\otimes B$. Then $\Lambda$ is a Gorenstein algebra and  $\id({}_{\Lambda}\Lambda)=d_A+d_B$. Since $k$ is a splitting field for   $A$ or $B$, by Lemma~\ref{Lem-modules-over-tensor-product} (3) every simple $\Lambda$-module is of the form $T\otimes S$, where ${}_AT$ is a simple $A$-module and ${}_BS$ is a simple $B$-module. Hence $\modcat{\Lambda}=\addfilt(\modcat{A}\otimes \modcat{B})$.  By Lemma \ref{lemma-OmegaXY-filt}, we have
$$\Omega_{\Lambda}^{d_A+d_B}\subseteq\addfilt(\Omega_A^{d_A}\otimes\Omega_B^{d_B}).$$
Note that $A\Gp=\Omega_A^{d_A}$, $B\Gp=\Omega_B^{d_B}$ and $\Lambda\Gp=\Omega_{\Lambda}^{d_A+d_B}$. Hence $\Lambda\Gp\subseteq\addfilt(A\Gp\otimes B\Gp)$.  The inclusion in the other direction follows from Proposition \ref{Prop-gorenstein-projective-bimodules} (1).
\end{proof}

The proof of Theorem \ref{theorem-Gorenstein-tensor-Gorenstein} certainly does not work if either $A$ or $B$ is not Gorenstein.  In the following, we assume that $A$ is an arbitrary algebra, and consider under which conditions on $B$ can we get an affirmative answer to the above question. The main tool is our theory on monic representations developed above.  The following lemma is crucial.

\begin{lem}\label{lemma-triangular-matrix-algebra-reduction}
Let $A$ be an algebra, and let $B:=\left[\begin{smallmatrix}B_1& M\\0 & B_2\end{smallmatrix}\right]$ be an upper triangular matrix algebra.  Suppose that $$\Mon(B_i, A\Gp)=\filt(A\Gp\otimes \projcat{B_i})$$ for $i=1, 2$. Then $\Mon(B, A\Gp)=\filt(A\Gp\otimes \projcat{B})$.
\end{lem}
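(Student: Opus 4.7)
The statement splits into two inclusions, and my plan addresses each in turn. For the forward inclusion $\filt(A\Gp\otimes\projcat{B}) \subseteq \Mon(B, A\Gp)$, I would invoke Proposition~\ref{proposition-mon(LB)}(2.b) with $\mathscr{L} = A\Gp$ to see that every generator $L\otimes P$ with $L\in A\Gp$ and $P\in\projcat{B}$ already lies in $\Mon(B, A\Gp)$, and then use that $\Mon(B, A\Gp)$ is extension-closed (Proposition~\ref{proposition-mon(LB)}(3)) to conclude.

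For the reverse inclusion, the plan is to exploit the upper-triangular structure of $\Lambda := A\otimes B \cong \left[\begin{smallmatrix}\Lambda_1 & A\otimes M \\ 0 & \Lambda_2\end{smallmatrix}\right]$ (with $\Lambda_i := A\otimes B_i$), in which every $\Lambda$-module $X$ becomes a triple $(X_1, X_2, g)$ with $g\colon (A\otimes M)\otimes_{\Lambda_2}X_2 \to X_1$ a $\Lambda_1$-map. Given $X \in \Mon(B, A\Gp)$, the $B$-projectivity of ${}_BX$ (Lemma~\ref{lemma-monicRep-equiv-condition}(5)) combined with the classification of projectives over upper-triangular algebras forces ${}_{B_2}X_2$ projective, $g$ injective, and ${}_{B_1}\cok(g)$ projective, producing a short exact sequence of $\Lambda$-modules
$$0 \longrightarrow Y \longrightarrow X \longrightarrow Z \longrightarrow 0,$$
with $Y := ((A\otimes M)\otimes_{\Lambda_2}X_2,\, X_2,\, \mathrm{id})$ and $Z := (\cok(g), 0, 0)$. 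Since $\filt(A\Gp\otimes\projcat{B})$ is closed under extensions, it suffices to place both $Y$ and $Z$ inside it.

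To that end, I first observe that for each simple $B_i$-module $S$ the right $\Lambda$-module $A\otimes \D(S)$ descends through $\Lambda\twoheadrightarrow\Lambda_i$, and a direct computation identifies $(A\otimes \D(S))\otimes_\Lambda X$ with $(A\otimes \D(S))\otimes_{\Lambda_1}\cok(g)$ when $i=1$ and with $(A\otimes \D(S))\otimes_{\Lambda_2}X_2$ when $i=2$. This will show $\cok(g)\in \Mon(B_1, A\Gp)$ and $X_2\in \Mon(B_2, A\Gp)$, so by hypothesis $\cok(g) \in \filt(A\Gp\otimes\projcat{B_1})$ and $X_2 \in \filt(A\Gp\otimes\projcat{B_2})$. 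Handling $Z$ is then routine: inflation along $\Lambda\twoheadrightarrow\Lambda_1$ is exact and sends $A'\otimes P_1$ (with $P_1 \in \projcat{B_1}$) to $A'\otimes(Be_1\otimes_{B_1}P_1)$, which lies in $A\Gp\otimes\projcat{B}$ since $Be_1\otimes_{B_1}P_1 \in \add(Be_1)\subseteq\projcat{B}$, so the filtration of $\cok(g)$ inflates to the desired one for $Z$.

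The main obstacle is $Y$, which arises as $F(X_2)$ for the functor $F(W) := (A\otimes Be_2)\otimes_{\Lambda_2}W \cong ((A\otimes M)\otimes_{\Lambda_2}W,\, W,\, \mathrm{id})$. Because $A\otimes Be_2 \cong (A\otimes M)\oplus\Lambda_2$ as right $\Lambda_2$-module and $M$ need not be right $B_2$-projective, $F$ is only right exact in general. My key claim will be that $F$ is nevertheless exact on $\Mon(B_2, A\Gp)$: for $W \in \Mon(B_2, A)$, any projective resolution $\cpx{P}$ of ${}_{\Lambda_2}W$ restricts along $B_2\hookrightarrow\Lambda_2$ to a $B_2$-projective resolution of ${}_{B_2}W$, which is itself projective by Lemma~\ref{lemma-monicRep-equiv-condition}(5); hence the restricted resolution is split exact, and rewriting $(A\otimes M)\otimes_{\Lambda_2}\cpx{P} \cong M\otimes_{B_2}\cpx{P}$ via Lemma~\ref{lemma-tensorModuleHom}(4) yields $\Tor^{\Lambda_2}_i(A\otimes M, W) = 0$ for all $i \geq 1$. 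With exactness of $F$ on $\Mon(B_2, A\Gp)$ in hand, I would lift the filtration of $X_2$ (whose partial sums remain in $\Mon(B_2, A\Gp)$ by extension-closure) through $F$ to a filtration of $Y$ whose factors $F(A'\otimes P_2) \cong A'\otimes(Be_2\otimes_{B_2}P_2)$ lie in $A\Gp\otimes\projcat{B}$, since $Be_2\otimes_{B_2}P_2 \in \add(Be_2)\subseteq\projcat{B}$. Assembling $Y$ and $Z$ via the extension above then yields $X \in \filt(A\Gp\otimes\projcat{B})$, completing the plan.
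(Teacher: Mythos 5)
Your proof is correct and takes essentially the same route as the paper: the short exact sequence $0\to Y\to X\to Z\to 0$ you extract from the triple description of $X$ over the triangular algebra is precisely what the paper obtains by applying $(A\otimes -)\otimes_\Lambda X$ to the bimodule sequence $0\to BeB\to B\to B/BeB\to 0$ with $e=e_2$, and your $Y$ and $Z$ are the paper's two outer terms. The differences are only presentational --- where you verify $X_2\in\Mon(B_2, A\Gp)$, $\cok(g)\in\Mon(B_1, A\Gp)$, and the exactness of $(A\otimes Be_2)\otimes_{\Lambda_2}-$ by direct computation, the paper packages the first two through Proposition~\ref{proposition-Mon(BL)-to-Mon(CL)} and folds the third into the observation that $(A\otimes Be)\otimes_{A\otimes eBe}-$ carries $\filt(A\Gp\otimes\projcat{eBe})$ into $\filt(A\Gp\otimes\projcat{B})$.
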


\begin{proof}
Let $e$ be the idempotent $\left[\begin{smallmatrix} 0 & 0 \\0 & 1 \end{smallmatrix}\right]$ in $B$. It is easy to see that $eB=eBe$ and  thus $BeB=BeBe=Be$. It follows that the multiplication map $Be\otimes_{eBe}eB\ra BeB$ is an isomorphism, and we get an exact sequence of $B$-$B$-bimodules
$$0\lra Be\otimes_{eBe}eB\lra B\lra B/BeB\lra 0.$$
As above, we set $\Lambda:=A\otimes B$. Let $X$ be a $\Lambda$-module in $\Mon(B, A\Gp)$. Applying $(A\otimes -)\otimes_{\Lambda}X$ to the above exact sequence gives rise to the following exact sequence of $\Lambda$-modules.
$$0\lra (A\otimes (Be\otimes_{eBe}eB))\otimes_{\Lambda}X\lra (A\otimes B)\otimes_{\Lambda}X\lra (A\otimes (B/BeB))\otimes_{\Lambda}X\lra 0. \quad\quad (*)$$
The middle term is isomorphic to $X$ itself.  We shall prove that the terms on both sides belong to $\filt(A\Gp\otimes \projcat{B})$.

Note that $eB(=eBe)$ is projective as left $eBe$-module. We have $(A\otimes eB)\otimes_{\Lambda}X\in\Mon(eBe, A\Gp)$ by Proposition \ref{proposition-Mon(BL)-to-Mon(CL)}. Since $eBe\cong B_2$, by our assumption, we get $(A\otimes eB)\otimes_{\Lambda}X\in\filt(A\Gp\otimes\projcat{eBe})$. Clearly, the functor $(A\otimes Be)\otimes_{(A\otimes eBe)}-$ sends modules in $\filt(A\Gp\otimes\projcat{eBe})$ to modules in $\filt(A\Gp\otimes \projcat{B})$.  Thus $(A\otimes (Be\otimes_{eBe}eB))\otimes_{\Lambda}X$, which is isomorphic to $(A\otimes Be)\otimes_{(A\otimes eBe)}(A\otimes eB)\otimes_{\Lambda}X$, falls into $\filt(A\Gp\otimes\projcat{B})$.  This proves that the term on the left hand side of $(*)$ is in $\filt(A\Gp\otimes\projcat{B})$.

For simplicity, we write $\bar{B}$ for $B/BeB$. By Proposition \ref{proposition-Mon(BL)-to-Mon(CL)} again, the $(A\otimes \bar{B})\otimes_{\Lambda}X$ belongs to $\Mon(\bar{B}, A\Gp)$. Note that the canonical surjective algebra homomorphism $B\lra \bar{B}$ induces an isomorphism $B_1\cong\bar{B}$, and $B_1$ is projective as a left $B$-module. This implies, by our assumption, that  $(A\otimes \bar{B})\otimes_{\Lambda}X\in\filt(A\Gp\otimes\projcat{\bar{B}})$. And every projective left $\bar{B}$-module is projective as a left $B$-module via the canonical map $B\lra \bar{B}$. Consequently, every module in $\filt(A\Gp\otimes\projcat{\bar{B}})$ is in $\filt(A\Gp\otimes\projcat{B})$ as a left $\Lambda$-module. This proves that the term on the right hand side of $(*)$ is in $\filt(A\Gp\otimes\projcat{B})$.

Altogether, we have proved that the terms on both sides of $(*)$ are in $\filt(A\Gp\otimes\projcat{B})$. Hence $X$, which is isomorphic to the middle term of $(*)$,  falls into $\filt(A\Gp\otimes\projcat{B})$. This finishes the proof.
\end{proof}

Recall that an algebra $B$ is called a triangular algebra provided that  the Ext-quiver of $B$ has no oriented cycles.

\begin{thm}\label{theorem-B-triangular}
Let $A$ be an algebra, and let $B$ be a triangular algebra such that $k$ is a splitting field for $B$. Then
$$\Mon(B, A\Gp)=(A\otimes B)\Gp=\filt(A\Gp\otimes B\Gp).$$
\end{thm}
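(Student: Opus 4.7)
The plan is to combine Theorem~\ref{theorem-gldimB-finite-monic=gorenstein} with an induction on the number of vertices of the Ext-quiver of $B$, using Lemma~\ref{lemma-triangular-matrix-algebra-reduction} as the inductive step.

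The first step is to observe that a triangular algebra $B$ automatically has finite global dimension. After reducing to the basic case via Corollary~\ref{corollary-Morita-BL-CL}, I would order the vertices compatibly with the acyclic orientation and induct downward on the index: the first syzygy of each simple $S_i$ has composition factors only among simples already treated, so $\pd(S_i)<\infty$. Hence $B$ is Gorenstein with $B\Gp=\projcat{B}$. Theorem~\ref{theorem-gldimB-finite-monic=gorenstein} then delivers the first equality $(A\otimes B)\Gp=\Mon(B, A\Gp)$ and reduces the remaining task to
$$\Mon(B, A\Gp)=\filt(A\Gp\otimes\projcat{B})\;(=\filt(A\Gp\otimes B\Gp)).$$
The inclusion ``$\supseteq$'' is immediate: Proposition~\ref{proposition-mon(LB)}~(2.b) puts $G\otimes P$ into $\Mon(B, A\Gp)$ for $G\in A\Gp$ and $P\in\projcat{B}$, and Corollary~\ref{corollary-Mon(BA)-resolving} ensures that $\Mon(B, A\Gp)$ is closed under extensions.

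For the converse inclusion I would induct on the number $n$ of vertices of the (Gabriel) quiver of the basic algebra $B$. When $n=1$, acyclicity rules out loops, so $B\cong B/\rad(B)$ is a division algebra, and the splitting field hypothesis forces $B\cong k$; then $\Lambda=A$ and both sides visibly coincide with $A\Gp$. For $n>1$, pick a source $i$ of the quiver of $B$ and set $e:=e_i$. Since no arrow ends at a source, $e_iBe_j=0$ for every $j\neq i$, so $eB(1-e)=0$, exhibiting
$$B\cong \begin{bmatrix} (1-e)B(1-e) & (1-e)Be \\ 0 & eBe\end{bmatrix}$$
as an upper triangular matrix algebra whose diagonal blocks are $B_1:=(1-e)B(1-e)$, triangular with $n-1$ vertices over the same splitting field, and $B_2:=eBe\cong k$ (acyclicity forbids cycles through $i$). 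The base case handles $B_2$ and the induction hypothesis handles $B_1$, so Lemma~\ref{lemma-triangular-matrix-algebra-reduction} transfers the equality to $B$.

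The only real technical subtlety lies in the preliminary step—confirming finite global dimension of a triangular algebra and the reduction to the basic case; once past that, Lemma~\ref{lemma-triangular-matrix-algebra-reduction} carries essentially all of the work, and the inductive bookkeeping is routine.
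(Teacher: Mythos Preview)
Your argument is correct and follows essentially the same route as the paper: reduce to basic $B$ via Corollary~\ref{corollary-Morita-BL-CL}, use finite global dimension of triangular algebras together with Theorem~\ref{theorem-gldimB-finite-monic=gorenstein} for the first equality, and then induct on the number of simples with Lemma~\ref{lemma-triangular-matrix-algebra-reduction} as the inductive step. The only differences are cosmetic: you peel off a single source vertex (so $B_2\cong k$) whereas the paper allows any triangular splitting with fewer simples on each block, and for closure of $\Mon(B,A\Gp)$ under extensions you should cite Proposition~\ref{proposition-mon(LB)}~(3) (or (4)) rather than Corollary~\ref{corollary-Mon(BA)-resolving}, which only treats $\Mon(B,A)$.
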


\begin{proof}
It is well-known that a triangular algebra has finite global dimension, then $B\Gp=\projcat{B}$. By Theorem \ref{theorem-gldimB-finite-monic=gorenstein}, we have $(A\otimes B)\Gp=\Mon(B, A\Gp)$. It remains to prove that $\Mon(B, A\Gp)=\filt(A\Gp\otimes\projcat{B})$.  By Corollary \ref{corollary-Morita-BL-CL}, we can assume that $B$ is a basic algebra. Suppose that $B$ has $n$ pairwise non-isomorphic simple $B$-modules.  We use induction on $n$. If $n=1$, then $B=k$. In this case, it is obvious that $\Mon(B, A\Gp)=\filt(A\Gp\otimes\projcat{B})$. Now assume that $n>1$. Since $B$ is triangular algebra, $B$ is isomorphic to a triangular matrix algebra $\left[\begin{smallmatrix}B_1& M\\0 & B_2\end{smallmatrix}\right]$ such that both  $B_1$ and $B_2$ are triangular algebras with less simple modules. By induction hypothesis, we can assume that $\Mon(B_i, A\Gp)=\filt(A\Gp\otimes\projcat{B_i})$ for $i=1, 2$. It follows from Lemma \ref{lemma-triangular-matrix-algebra-reduction} that $\Mon(B, A\Gp)=\filt(A\Gp\otimes\projcat{B})=\filt(A\Gp\otimes B\Gp)$.
\end{proof}

Despite Theorem~\ref{theorem-Gorenstein-tensor-Gorenstein} and Theorem~\ref{theorem-B-triangular},  the answer to  Question~\ref{Ques-filt} is negative in general. We are grateful to Xiao-Wu Chen and Dawei Shen for communicating us the following example.

\begin{exm}\label{example-filtration-fail}
Suppose $\mbox{\rm char}\ k=0$. Let $A=k[X]/(X^2)$ and $B=k[X_1, \cdots, X_n]/(X_1, \cdots, X_n)^2$ with $n\geq 2$.
Since $A$ is selfinjective, $\modcat{A}= A\Gp$ and there are only two indecomposable $A$-modules, namely $k$ and $A$.
The algebra  $B$ is CM-free by \cite{Chen2012f}, so $B\Gp=B\projcat$ and  the regular module ${}_BB$ is the  unique indecomposable Gorenstein projective left $B$-module. It is easy to see that, up to isomorphism, there are finitely many modules  in $\addfilt(A\Gp\otimes B\Gp)$ of a fixed dimension.  However,  the results in {\rm \cite{Tracy2015-reprint}} show that there are infinitely many isomorphism classes of Gorenstein projective $(A\otimes B)$-modules of certain given dimension. Hence $\addfilt(A\Gp\otimes B\Gp)\subsetneq(A\otimes B)\Gp$. But, by Theorem \ref{theorem-Gorenstein-CM-free}, we get $(A\otimes B)\Gp=\Mon(B, A\Gp)$.
\end{exm}

\subsection{Infinitely generated Gorenstein projective modules}

In this subsection, we go beyond the other part of this paper by considering infinitely generated Gorenstein projective modules. We want to show a version of
Theorem~\ref{theorem-Gorenstein-tensor-Gorenstein}  for infinitely generated Gorenstein projective modules. At first, let us recall the relevant definitions.

Let $A$ be a k-algebra, not necessarily finite-dimensional. Denote by $A\Modcat$, $A\Projcat$ the category of all left $A$-modules and that of all projective left $A$-modules, respectively.
An $A$-module $M\in \Modcat{A}$ is Gorenstein projective if
 there is an exact complex
$$\cpx{P}: \quad\cdots\lra P^{i}\lraf{d^i} P^{i+1}\lraf{d^{i+1}} P^{i+2}\lra\cdots$$
 of (not necessarily finitely generated) projective  modules $P^i\in \Projcat{A}$ with $X=\Ker(d^1)$ such that $\Hom_A(\cpx{P}, Q)$ is again exact for arbitrary projective $A$-module $Q$. Let $A\GP$ denote the full subcategory of $A\Modcat$ consisting of all Gorenstein projective $A$-modules.

\medskip

Let $\mathscr{X}$ be a class of $A$-modules.  We denote by $\Filt(\mathscr{X})$ the full subcategory of $A\Modcat$ consisting of module $X$ such that there exists an ordinal $\alpha$ and  a filtration  $0=X_0\subset X_1\subset\cdots\subset X_\beta\subset X_{\beta+1}\subset \cdots \subset X_\alpha=X$ of $A$-modules  with  the factors $X_{\beta+1}/X_{\beta}$  all lying  in $\mathscr{X}$ for all $\beta+1\leq \alpha$. By $\AddFilt(\mathscr{X})$ we denote the additive closure of $\Filt(\mathscr{X})$. Precisely speaking, $\AddFilt(\mathscr{X})$ consists of modules which are direct summands of modules in $\Filt(\mathscr{X})$. Actually, the category $\Filt(\mathscr{X})$ is the smallest full subcategory of $\Modcat{A}$ containing $\mathscr{X}$ closed under (infinite) extensions, and $\AddFilt(\mathscr{X})$ is the smallest full subcategory of $A\Modcat$ containing $\mathscr{X}$ closed under (infinite) extensions and direct summands.

\begin{thm}\label{theorem-Gorenstein-tensor-Gorenstein-infinite-version}
Let  $A$ and $B$ be finite dimensional  Gorenstein $k$-algebras. Assume that $k$ is a splitting field for   $A$ or $B$.  Then $(A\otimes B)\GP=\AddFilt(A\GP\otimes B\GP)$.
\end{thm}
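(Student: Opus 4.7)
The plan is to reduce the infinitely generated statement to the finitely generated Theorem~\ref{theorem-Gorenstein-tensor-Gorenstein} via two general facts about an arbitrary Gorenstein artin $k$-algebra $\Gamma$: $(a)$ $\Gamma\GP$ is closed under transfinite extensions; and $(b)$ $\Gamma\GP=\AddFilt(\Gamma\Gp)$, i.e.\ every (possibly infinitely generated) Gorenstein projective $\Gamma$-module is a direct summand of some transfinite extension of finitely generated ones. Both facts will be applied with $\Gamma$ equal to $A$, $B$, and $\Lambda:=A\otimes B$; the latter is again Gorenstein because $\id({}_{\Lambda}\Lambda)\le\id({}_AA)+\id({}_BB)<\infty$.

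Fact~$(a)$ is Eklof's lemma applied to the complete hereditary cotorsion pair $(\Gamma\GP,\mathcal W)$ on $\Modcat{\Gamma}$, where $\mathcal W$ denotes the class of $\Gamma$-modules of finite injective (equivalently, finite projective) dimension. Fact~$(b)$ is where Quillen's small object argument intervenes: $\Gamma\Gp$ is, up to isomorphism, a set of finitely presented $\Gamma$-modules, and a standard check yields $\mathcal W=(\Gamma\Gp)^{\perp}$, so the cotorsion pair $(\Gamma\GP,\mathcal W)$ is cogenerated by the set $\Gamma\Gp$. Quillen's argument then produces, for every $M\in\Modcat{\Gamma}$, a special $\Gamma\GP$-precover
$$0\lra N\lra G\lra M\lra 0$$
with $G\in\Filt(\Gamma\Gp)$ and $N\in\mathcal W=(\Gamma\GP)^{\perp}$. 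When $M$ itself is Gorenstein projective one has $\Ext^{1}_{\Gamma}(M,N)=0$, so this sequence splits, exhibiting $M$ as a direct summand of $G\in\Filt(\Gamma\Gp)$, which is~$(b)$.

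Granting $(a)$ and $(b)$, both inclusions of the theorem are formal. For $\Lambda\GP\subseteq\AddFilt(A\GP\otimes B\GP)$, apply $(b)$ to $\Lambda$ and then invoke Theorem~\ref{theorem-Gorenstein-tensor-Gorenstein}:
$$\Lambda\GP=\AddFilt(\Lambda\Gp)=\AddFilt(\addfilt(A\Gp\otimes B\Gp))\subseteq\AddFilt(A\GP\otimes B\GP).$$
For the reverse direction it suffices to show $A\GP\otimes B\GP\subseteq\Lambda\GP$, since the latter is closed under transfinite extensions by~$(a)$ and trivially under summands. Given $X\in A\GP$ and $Y\in B\GP$, use~$(b)$ to realize $X$ and $Y$ as summands of some $X'\in\Filt(A\Gp)$ and $Y'\in\Filt(B\Gp)$; then $X\otimes Y$ is a summand of $X'\otimes Y'$. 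Exactness of tensoring over the field $k$ transfers both filtrations through $\otimes$, placing $X'\otimes Y'$ in $\Filt(A\Gp\otimes B\Gp)$. Proposition~\ref{Prop-gorenstein-projective-bimodules}$(1)$ gives $A\Gp\otimes B\Gp\subseteq\Lambda\Gp\subseteq\Lambda\GP$, and $(a)$ then yields $X'\otimes Y'\in\Filt(\Lambda\GP)\subseteq\Lambda\GP$, whence $X\otimes Y\in\Lambda\GP$.

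The main obstacle is fact~$(b)$, more precisely the assertion that $(\Gamma\GP,\mathcal W)$ is a complete cotorsion pair on $\Modcat{\Gamma}$ cogenerated by the set $\Gamma\Gp$. For Iwanaga-Gorenstein artin algebras this is classical (see, for instance, Beligiannis-Reiten's work on virtually Gorenstein algebras), and once it is available Quillen's small object argument provides the splitting that finishes the proof.
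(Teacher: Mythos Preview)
Your proof is correct and takes a genuinely different route from the paper's, though both rest on the same two ingredients: Quillen's small object argument and Eklof's lemma.

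The paper works directly at the level of $\Lambda=A\otimes B$: it invokes Hovey's description $\Lambda\GP={}^{\perp_1}(\{\Omega^{d_A+d_B}(\Lambda/I)\}^{\perp_1})$, uses the splitting-field hypothesis to write each $\Lambda/I$ as a finite extension of simples $T\otimes S$, and then re-runs the syzygy decomposition of Lemma~\ref{lemma-OmegaXY-filt} to obtain ${}^{\perp_1}((\Omega_A^{d_A}\otimes\Omega_B^{d_B})^{\perp_1})$; the small object argument then gives $\Lambda\GP=\AddFilt(\Omega_A^{d_A}\otimes\Omega_B^{d_B})$ directly. Your approach is more modular: you isolate the general fact $\Gamma\GP=\AddFilt(\Gamma\Gp)$ for any Gorenstein artin algebra $\Gamma$ (itself a consequence of the small object argument applied to the set $\Gamma\Gp$, using $(\Gamma\Gp)^{\perp_1}=\mathcal W$), and then feed in the already-proven finitely generated Theorem~\ref{theorem-Gorenstein-tensor-Gorenstein} as a black box. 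This buys you a clean reduction---no need to revisit the syzygy argument in the infinite setting---at the cost of citing slightly more cotorsion-pair machinery. A further bonus of your route is that you treat the inclusion $A\GP\otimes B\GP\subseteq\Lambda\GP$ explicitly (via fact~(b) for $A$ and $B$, the tensor-of-filtrations step, and Proposition~\ref{Prop-gorenstein-projective-bimodules}(1)); the paper's proof asserts the corresponding inclusion $\AddFilt(A\GP\otimes B\GP)\subseteq\Lambda\GP$ via Eklof's lemma but leaves the base case $A\GP\otimes B\GP\subseteq\Lambda\GP$ implicit.
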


The proof of the above result uses essentially Quillen's small object argument. Let us recall one version of it. Let $R$ be a ring.  Let $\mathscr{C}$ be a class of modules in $R\Modcat$. Denote
$$\mathscr{C}^{\perp_1}=\{M\in \Modcat{R}\  |\ \mathrm{Ext}^1_R(  \mathscr{C}, M)=0\}$$ and
$$ {}^{\perp_1}\mathscr{C}=\{M\in \Modcat{R}\  |\ \mathrm{Ext}^1_R(M,   \mathscr{C})=0\}.$$

Let $\mathscr{S}$ be a set of modules in $\Modcat{R}$. Quillen's small object argument shows that  it cogenerates a complete cotorsion pair $({}^{\perp_1}(\mathscr{S}^{\perp_1}), \mathscr{S}^{\perp_1})$ and
${}^{\perp_1}(\mathscr{S}^{\perp_1})=\AddFilt(\mathscr{S})$; see \cite[Theorem 6.5]{Hovey2002}.  Another proof was given by Eklof and Trlifaj \cite[Theorem 10]{Eklof2001}.


\begin{proof}

By \cite[Theorem 8.3]{Hovey2002}, for a Gorenstein algebra $R$ of with $\id{}_RR=d$, $$R\GP= {}^{\perp_1}(\{\Omega^{d}(R/I)\ |\ I \ \mathrm{is \ a\  left \ ideal\  of}\ R\}^{\perp_1}).$$
Now, given two finite-dimensional Gorenstein algebras $A$ and $B$ with  $\id{}_AA=d_A$ and $\id{}_BB=d_B$ respectively, the tensor product algebra $A\otimes B$ is a Gorenstein algebra with $\id{}_{A\otimes B}(A\otimes B)=d_A+d_B$ and $$(A\otimes B)\GP={}^{\perp_1}(\{\Omega^{d_A+d_B}((A\otimes B)/I)\ |\ I \ \mathrm{is \ a\  left \ ideal\  of}\ A\otimes B\}^{\perp_1}).$$
Since $A\otimes B$ is a finite dimensional algebra, $(A\otimes B)/I$ is a finite extension of simple $(A\otimes B)$-modules,  and by assumption,  simple $(A\otimes B)$-modules are of the form $T\otimes S$ for a simple $A$-module $T$ and a simple $B$-module $S$. By a similar proof as that of Lemma~\ref{lemma-OmegaXY-filt},
we see that $$(A\otimes B)\GP={}^{\perp_1}(\{\Omega^{d_A+d_B}((A\otimes B)/I)\ |\ I \ \mathrm{is \ a\  left \ ideal\  of}\ A\otimes B\}^{\perp_1})= {}^{\perp_1}((\Omega_A^{d_A}\otimes\Omega_B^{d_B})^{\perp_1}).$$
By Quillen's small object argument, we have
$$ (A\otimes B)\GP=\AddFilt(\Omega_A^{d_A}\otimes\Omega_B^{d_B})\subseteq \AddFilt(A\GP\otimes B\GP).$$
The inclusion $\AddFilt(A\GP\otimes B\GP)\subseteq (A\otimes B)\GP$ follows from \cite[Lemma 1]{Eklof2001}, because   $$(A\otimes B)\GP={}^{\perp_1}(\{\Omega^{d}((A\otimes B)/I)\ |\ I \ \mathrm{is \ a\  left \ ideal\  of}\ A\otimes B\}^{\perp_1})$$ is closed under transfinite extensions and direct summands.
So $(A\otimes B)\GP=\AddFilt(A\GP\otimes B\GP)$.
\end{proof}

\section*{Acknowledgements}

The  author W. Hu is grateful to NSFC (No.11471038, No.11331006) and the Fundamental Research Funds for the Central Universities for partial support. X.-H. Luo is supported by NSFC (No.11401323, No. 11771272) and Jiangsu Government Scholarship for Oversea Studies. B.-L. Xiong is supported by NSFC (No.11301019, No.11471038). G. Zhou is supported by NSFC (No.11671139) and by STCSM (No.13dz2260400). The main part of this work was completed during the author B.-L. Xiong visiting the University of Bielefeld supported by China Scholarship Council (CSC No.201506885017). He wishes to express his gratitude to Henning Krause for the cordial hospitality and the wonderful working atmosphere.

\bigskip

{\footnotesize
Wei Hu

\medskip
School of Mathematical Sciences, Laboratory of Mathematics and Complex Systems, MOE, Beijing Normal University, 100875 Beijing, China

{\tt Email: huwei@bnu.edu.cn}

\bigskip
Xiu-Hua Luo

\medskip
Department of Mathematics, Nantong University,
Jiangsu 226019, P. R. China

{\tt Email: xiuhualuo2014@163.com }

\bigskip
Bao-Lin Xiong

\medskip

Beijing No. 4 High School, Beijing 100034, P. R. China

\smallskip
Department of Mathematics, \ Beijing  University of Chemical  Technology,
Beijing 100029, P. R. China

{\tt Email: xiongbaolin@gmail.com}

\bigskip
Guodong Zhou

\medskip
School of Mathematical Sciences, Shanghai Key Laboratory of PMMP, East China Normal University, Shanghai 200241, P. R. China

{\tt Email: gdzhou@math.ecnu.edu.cn}
}


\begin{thebibliography}{CPST08}

\bibitem[Arn00]{Arnold2000}
\bgroup\scshape{}D.~M. Arnold\egroup{}, \emph{Abelian groups and representations of finite partially ordered sets}, New York, Springer-Verlag, 2000.

\bibitem[Aus67]{Auslander1967}
\bgroup\scshape{}M.~Auslander\egroup{}, Anneaux de Gorenstein, et torsion en
  alg\`{e}bre commutative,  Secr\'{e}tariat meth\'{e}matique, Paris
  1967, S\'{e}minaire d'Alg\`{e}bre Commutative dirig\'{e}par Pierre Samuel,
  1966/67.

\bibitem[AB69]{Auslander1969}
\bgroup\scshape{}M.~Auslander\egroup{} and \bgroup\scshape{}M.~Bridger\egroup{}, \emph{Stable module theory}, Memoirs of the American Mathematical Society, No. 94, American Mathematical Society, Providence, R.I., 1969.

\bibitem[Bir34]{Birkhoff1934}
\bgroup\scshape{}G.~Birkhoff\egroup{}, Subgroups of abelian groups,
  Proc. Lond. Math. Soc. II. Ser. 38 (1934), 385--401.

\bibitem[Buc87]{Buchweitz1987a}
\bgroup\scshape{}R.-O. Buchweitz\egroup{}, Maximal Cohen-Macaulay modules and
  Tate cohomology over gorenstein rings, unpublished manuscript (1987).

\bibitem[BGS87]{Buchweitz1987b}
\bgroup\scshape{}R.-O. Buchweitz\egroup{}, \bgroup\scshape{}G.-M.
  Greuel\egroup{}, and \bgroup\scshape{}F.-O. Schreyer\egroup{}, Cohen-Macaulay modules on hypersurface singularities. II, Invent. Math. 88 (1) (1987), 165--182.

\bibitem[CE99]{Cartan1999}
\bgroup\scshape{}H.~Cartan\egroup{} and \bgroup\scshape{}S.~Eilenberg\egroup{}, \emph{Homological algebra}, Princeton Landmarks in Mathematics, Princeton University Press, Princeton, N.J., 1999.

\bibitem[Che08]{Chen08}
\bgroup\scshape{}X.-W. Chen\egroup{}, An Auslander-type result for Gorenstein-projective modules, Adv. Math. 218 (2008), 2043-2050.   

\bibitem[Che11]{Chen2011c}
\bgroup\scshape{}X.-W. Chen\egroup{}, The stable monomorphism category of a
  Frobenius category,  Math. Res. Lett. 18 (2011), 125--137.

\bibitem[Che12a]{Chen2012f}
\bgroup\scshape{}X.-W. Chen\egroup{}, Algebras with radical square zero are
  either self-injective or {CM}-free, Proc. Amer. Math. Soc. 140 (2012), 93--98.

\bibitem[Che12b]{Chen2012d}
\bgroup\scshape{}X.-W. Chen\egroup{}, Three results on Frobenius categories, Math. Z. 270 (2012), 43--58.

\bibitem[CSZ15]{Chen2015}
\bgroup\scshape{}X.-W. Chen\egroup{}, \bgroup\scshape{}D.~Shen\egroup{}, and
  \bgroup\scshape{}G.~Zhou\egroup{}, The Gorenstein-projective modules over a
  monomial algebra, accepted by Proceedings of the Royal Society of Edinburgh Section A: Mathematics, arXiv:1501.02978 (2015).

\bibitem[CPST08]{Christensen2008}
\bgroup\scshape{}L.~W. Christensen\egroup{}, \bgroup\scshape{}G.~Piepmeyer\egroup{}, \bgroup\scshape{}J.~Striuli\egroup{}, and \bgroup\scshape{}R.~Takahashi\egroup{}, Finite Gorenstein representation type implies simple singularity, Adv. Math. 218 (2008), 1012--1026.

\bibitem[ET01]{Eklof2001}
\bgroup\scshape{}P.~C. Eklof\egroup{} and \bgroup\scshape{}J.~Trlifaj\egroup{}, How to make $\mathrm{Ext}$ vanishes, Bull. London Math. Soc. 33 (2001), 41--51.

\bibitem[EJ95]{Enochs1995}
\bgroup\scshape{}E.~E. Enochs\egroup{} and \bgroup\scshape{}O.~M.~G. Jenda\egroup{}, Gorenstein injective and projective modules, Math. Z. 220 (1995), 611--633.

\bibitem[EHSL16]{EshraghiLi2016}
\bgroup\scshape{}H.~Eshraghi\egroup{}, \bgroup\scshape{}R.~Hafezi\egroup{},
  \bgroup\scshape{}S.~Salarian\egroup{}, and \bgroup\scshape{}Z.-W.
  Li\egroup{}, Gorenstein projective modules over triangular matrix rings, Algebra Colloq. 23 (1) (2016), 97--104.

\bibitem[GP15]{GaoPsaroudakis2015}
\bgroup\scshape{}N.~Gao\egroup{} and \bgroup\scshape{}C.~Psaroudakis\egroup{},
  Gorenstein homological aspects of monomorphism categories via Morita rings, Algebra and Representation Theory 20 (2) (2017), 487--529.

\bibitem[Hap91]{Happel1991b}
\bgroup\scshape{}D.~Happel\egroup{}, On {G}orenstein algebras,  in
  \emph{Representation theory of finite groups and finite-dimensional algebras
  ({B}ielefeld, 1991)}, Progr. Math. 95, Birkh{\"a}user, Basel, 1991, pp.~389--404.

\bibitem[Hov12]{Hovey2002}
\bgroup\scshape{}M.~Hovey\egroup{}, Cotorsion pairs, model category structures, and representation theory, Math. Z. 241 (3) (2002), 553--592.

\bibitem[HP16]{Hu2016}
\bgroup\scshape{}W.~Hu\egroup{} and \bgroup\scshape{}S.~Pan\egroup{}, Stable
  functors of derived equivalences and Gorenstein projective modules,
Mathematische Nachrichten 290 (10) (2017), 1512--1530. 

\bibitem[Kn{\"{o}}87]{Knorrer1987}
\bgroup\scshape{}H.~Kn{\"{o}}rrer\egroup{}, Cohen-Macaulay modules on
  hypersurface singularities. {I},  Invent. Math. 88 (1) (1987), 153--164.

\bibitem[KLM13a]{Kussin2013a}
\bgroup\scshape{}D.~Kussin\egroup{}, \bgroup\scshape{}H.~Lenzing\egroup{}, and
  \bgroup\scshape{}H.~Meltzer\egroup{}, Nilpotent operators and weighted
  projective lines, J. reine angew. Math. 685 (2013), 33--71.

\bibitem[KLM13b]{Kussin2013b}
\bgroup\scshape{}D.~Kussin\egroup{}, \bgroup\scshape{}H.~Lenzing\egroup{}, and
  \bgroup\scshape{}H.~Meltzer\egroup{}, 
  Triangle singularities, ADE-chains, and weighted projective lines, Adv. Math. 237 (2013), 194--251.

\bibitem[Les94]{Les94}
\bgroup\scshape{} Z. Leszczy\'{n}ski\egroup{}, On the representation type of tensor product algebras, Fund. Math. 144 (1994), 143--161.

\bibitem[LZ10]{Li2010}
\bgroup\scshape{}Z.-W. Li\egroup{} and \bgroup\scshape{}P.~Zhang\egroup{}, A
  construction of Gorenstein-projective modules, J. Algebra 323 (2010), 1802--1812.

\bibitem[LZ13]{Luo2013}
\bgroup\scshape{}X.-H. Luo\egroup{} and \bgroup\scshape{}P.~Zhang\egroup{},
  Monic representations and Gorenstein-projective modules, Pacific J.
  Math. 264 (2013), 163--194.

\bibitem[LZ15]{Luozhang2015}
\bgroup\scshape{}X.-H. Luo\egroup{} and \bgroup\scshape{}P.~Zhang\egroup{},
  Separated monic representations {I}: Gorenstein-projective modules, J. Algebra 479 (2017), 1--34.

\bibitem[Orl04]{Orlov2004}
\bgroup\scshape{}D.~Orlov\egroup{}, Triangulated categories of singularities
  and d-branes in Landau-Ginzburg models, Proc. Steklov Inst. Math. 246 (3) (2004), 227--248.

\bibitem[Ric91]{Rickard1991}
\bgroup\scshape{}J.~Rickard\egroup{}, Derived equivalences as derived functors, J. London Math. Soc. 43 (1991), 37--48.

\bibitem[Rin13]{Ringel2013b}
\bgroup\scshape{}C.~M. Ringel\egroup{}, The Gorenstein projective modules for the Nakayama algebras. {I}, J. Algebra 385 (2013), 241--261.

\bibitem[RS06]{Ringel2006b}
\bgroup\scshape{}C.~M. Ringel\egroup{} and \bgroup\scshape{}M.~Schmidmeier\egroup{}, Submodule categories of wild representation type, J. Pure Appl. Algebra 205 (2) (2006), 412--422.

\bibitem[RS08a]{Ringel2008c}
\bgroup\scshape{}C.~M. Ringel\egroup{} and \bgroup\scshape{}M.~Schmidmeier\egroup{}, The Auslander-Reiten translation in submodule categories, Trans. Amer. Math. Soc. 360 (2008), 691--716.

\bibitem[RS08b]{Ringel2008b}
\bgroup\scshape{}C.~M. Ringel\egroup{} and \bgroup\scshape{}M.~Schmidmeier\egroup{}, Invariant subspaces of nilpotent linear operators. I,  J. reine angew. Math. 614 (2008), 1--52.

\bibitem[RX12]{Ringel2012a}
\bgroup\scshape{}C.~M. Ringel\egroup{} and \bgroup\scshape{}B.-L.
  Xiong\egroup{}, On radical square zero rings, Algebra Discrete Math. 14 (2012), 297--306.

\bibitem[RZ11]{Ringel2011-preprint}
\bgroup\scshape{}C.~M. Ringel\egroup{} and \bgroup\scshape{}P.~Zhang\egroup{},
  Representations of quivers over the algebra of dual numbers, J. Algebra 475 (2017), 327--360.

\bibitem[She16]{Shen2016}
\bgroup\scshape{}D.~Shen\egroup{}, A description of Gorenstein projective
  modules over the tensor products of algebras, arXiv: 1602.00116
  (2016).

\bibitem[Sim02]{Simson2002}
\bgroup\scshape{}D.~Simson\egroup{}, Chain categories of modules and
  subprojective representations of posets over uniserial algebras, Rocky Mountain J. Math. 32 (4) (2002), 1627--1650.

\bibitem[Sim18]{Simson2018}
\bgroup\scshape{}D.~Simson\egroup{}, Representation-finite Birkhoff type problems for nilpotent linear operators, J. Pure Appl. Algebra 222 (8) (2018), 2181--2198.

\bibitem[SZ16]{Song2016}
\bgroup\scshape{}K.~Song,Y.~Zhang\egroup{}, Injective objects of monomorphism categories, Front. Math. China 11 (2) (2016), 401--409.

\bibitem[Tra15]{Tracy2015-reprint}
\bgroup\scshape{}D.~A.~R. Tracy\egroup{}, A description of totally reflexive
  modules for a class of non-Gorenstein rings, arXiv:1510.04922 (2015).

\bibitem[XZ12]{Xiong2012}
\bgroup\scshape{}B.-L. Xiong\egroup{} and \bgroup\scshape{}P.~Zhang\egroup{},
  Gorenstein-projective modules over triangular matrix Artin algebras, J. Algebra Appl. 11 (2012), 1250066, 14p.

\bibitem[XZ17]{Xiong2017}
\bgroup\scshape{}B.-L. Xiong\egroup{} and \bgroup\scshape{}P.~Zhang\egroup{}, Separated monic representations II: Frobenius subcategories and RSS equivalences, arXiv:1707.04866 (2017).

\bibitem[XZZ14]{Xiong2014}
\bgroup\scshape{}B.-L. Xiong\egroup{}, \bgroup\scshape{}P.~Zhang\egroup{}, and
  \bgroup\scshape{}Y.-H. Zhang\egroup{}, Auslander-{R}eiten translations in
  monomorphism categories, Forum Math. 26 (2014), 863--912.

\bibitem[XZZ18]{Xiong2018}
\bgroup\scshape{}B.-L. Xiong\egroup{}, \bgroup\scshape{}P.~Zhang\egroup{}, and
  \bgroup\scshape{}Y.-H. Zhang\egroup{}, Bimodule monomorphism categories and RSS equivalences via cotilting modules, J. Algebra 503 (2018), 21--55.

\bibitem[Zha11]{Zhang2011}
\bgroup\scshape{}P.~Zhang\egroup{}, Monomorphism categories, cotilting theory,
  and Gorenstein-projective modules, J. Algebra 339 (2011),
  181--202.

\bibitem[Zha13]{Zhang2013}
\bgroup\scshape{}P.~Zhang\egroup{}, Gorenstein-projective modules and symmetric recollements, J. Algebra 388 (2013), 65--80.

\end{thebibliography}
\end{document}